\newtheorem{theorem}{Theorem}[section]
\newtheorem{lemma}[theorem]{Lemma}
\newtheorem{proposition}[theorem]{Proposition}
\newtheorem{corollary}[theorem]{Corollary}
\newtheorem{definition}[theorem]{Definition}
\newtheorem{example}[theorem]{Example}
\newtheorem{remark}[theorem]{Remark}
\def\a{\bar{a}}\def\b{\bar{b}}\def\c{\bar{c}}
\def\x{\bar{x}}\def\y{\bar{y}}
\def\z{\bar{z}}\def\r{\bar{r}}
\def\e{\bar{e}}
\def\Nn{\mathbb N}
\def\Rn{\mathbb R}
\def\0{\sf 0}
\begin{document}

\begin{center}
{\Large\sc Extreme types and extremal models}
\bigskip

{{\bf Seyed-Mohammad Bagheri}}
\vspace{3mm}

{\footnotesize {\footnotesize Department of Pure Mathematics, Faculty of Mathematical Sciences,\\
Tarbiat-Modares University, Tehran, Iran, P.O. Box 14115-134}\\
e-mail: bagheri@modares.ac.ir}
\end{center}
\bigskip

\begin{abstract}
In the affine fragment of continuous logic, type spaces are compact convex sets.
I study some model theoretic properties of extreme types.
It is proved that every complete theory $T$ has an extremal model, i.e. a model which realizes only extreme types.
Extremal models form an elementary class in the full continuous logic sense if and only if
the set of extreme $n$-types is closed in $S_n(T)$ for each $n$.
Also, some applications are given in the special cases where the theory has a compact or first order model.
\end{abstract}
\vspace{7mm}

{\sc Keywords}: {\small linear continuous logic, extreme type, omitting types, extremal model}

{\small {\sc AMS subject classification:} 03C50, 03C66}
\bigskip
\section{Introduction}
First order omitting types theorem (or its continuous logic variant) is usually proved by Henkin's method.
The theorem holds only for countable languages and its proof uses the connective $\wedge$ in an essential way.
The goal of the present paper is mainly to prove omitting types theorem in the framework of linear continuous logic.
This is the affine fragment of continuous logic based on the additive structure of the real numbers
as value space. The conjunction operator is not allowed here.
So, the expressive power is reduced and sets of conditions (types) are more easily realized.
In this fragment, complete types correspond to normalized positive linear functionals on the normed
vector space of formulas (while, they correspond to characters in full continuous logic \cite{Bagheri-Lip}).
In particular, type spaces form compact convex sets (while compact sets in the full variant).
This additional structure imposes some new features on the linear variant of continuous model theory.

A fundamental notion in the study of compact convex sets is extremity.
A more special notion is exposedness.
We show in this paper that every complete theory has a model which omits all non-extreme types.
Such models will be called \emph{extremal}. The class of extremal models forms an abstract elementary class.
It forms an elementary class in the framework of full continuous logic if and only if
the set of extreme $n$-types is compact in the logic topology for all $n$.
In this case, extreme types coincide with the types (in the continuous logic sense) of the extremal theory.
So, some extreme types may be omitted again in extremal models using
the full continuous logic variant of omitting types theorem.

A remarkable consequence of disallowing conjunction and disjunction operators in the formation of formulas is that (linearly)
complete theories may have both compact and non-compact models.
If the theory has a compact model, extreme types are exactly those which are realized in all compact models.
In general, exposedness (a linear variant of principality) may be a candidate for non-omittability.

Omitting non-extreme types is interesting in its own right. Countability of language is not needed.
All non-extreme types can be omitted simultaneously.
Moreover, the proof can be adopted to find arbitrarily large models omitting all non-extreme types
if the theory allows an infinite sequence of pairwise distant points (e.g. if the theory has an infinite first order model).
An advantage of working with extreme types (comparing with exposed ones)
is that a wide range of literature concerning them exists in convex analysis.
Some mathematical properties of extreme types then find interesting applications in linear model theory.
The proofs given in this paper work for complete types in complete theories.
More general cases (incomplete theories or types) need further study.

In the next section, we introduce linear continuous logic. More details can be found in \cite{Bagheri-Lip}.
In the third section we study facial types, i.e. partial types which correspond to
faces of the type spaces $S_n(T)$. Complete facial types are called extreme types.
We show in section four that every complete theory has an extremal model.
In the last section, we give some applications of the extremal omitting types theorem.
In particular, we consider situations where the extremal models form an elementary class
in the full continuous logic sense. We also show that linearly complete theories can not be separably categorical.

\section{Linear continuous logic}
All metric spaces are assumed to be complete with bounded diameter.
Let $X,Y$ be metric spaces. A function $f:X\rightarrow Y$
is \emph{$\lambda$-Lipschitz} if
$$\ \ \ \ \ d(f(x),f(y))\leqslant\lambda d(x,y) \hspace{14mm} \forall x,y\in X.$$
Full continuous logic is usually presented as a direct generalization of first order logic.
So, the value space is taken to be the unit interval and Boolean connectives $\wedge$ and $\neg$ are used.
To achieve the linear fragment of continuous logic, we shift to the whole real line and use its
algebraic operations as connectives.
A Lipshcitz language is a set $L$ consisting of constant symbols as well as function
and relation symbols of various arities. To each function symbol $F$
is assigned a Lipschitz constant $\lambda_F\geqslant0$ and to each relation symbol $R$ is assigned
a Lipcshitz constant $\lambda_R\geqslant0$.
It is always assumed that $L$ contains a distinguished binary relation symbol $d$ for metric.
An $L$-structure is a complete metric space $(M,d)$ on which the symbols of $L$ are
appropriately interpreted.
So, for constant symbol $c\in L$ one has that $c^M\in M$, for $n$-ary function symbol $F\in L$ the function $F^M:M^n\rightarrow M$
is $\lambda_F$-Lipschitz and similarly for $n$-ary relation symbol $R\in L$, the function $R^M:M^n\rightarrow\Rn$
is $\lambda_R$-Lipschitz with $\|R^M\|_{\infty}\leqslant1$.
In particular, $d(x,y)\leqslant1$ for all $x,y$.
Here, we put the metric $\sum_{i<\alpha}2^{-i}d(x_i,y_i)$ on $M^\alpha$ for every $1\leqslant\alpha\leqslant\omega$.

The set of $L$-terms is defined as in first order logic.
The set of $L$-formulas of linear continuous logic is inductively defined as follows:
$$r,\ \ \ d(t_1,t_2),\ \ \ R(t_1,...,t_n),\ \ \ r\phi,\ \ \ \phi+\psi,\ \ \ \sup_x\phi,\ \ \ \inf_x\phi$$
where  $r\in\Rn$, $R\in L$ is $n$-ary relation symbol and $t_1,...t_n$ are $L$-terms.
In full continuous logic, one further allows $\phi\wedge\psi$ and $\phi\vee\psi$ to be formulas.
So, the expressive power of the linear fragment is significantly reduced with respect to full continuous logic.

A formula without free variable is called a \emph{sentence}.
If $\phi(\x)$ is a formula, $M$ is a structure and $\a\in M$,
the real value $\phi^M(\a)$ is defined by induction on the complexity of $\phi$.
Every map $\phi^M:M^n\rightarrow\Rn$ is then Lipschitz with respect to some $\lambda_{\phi}$
and bounded by some $\mathbf b_\phi$ (both depending only on $\phi$).
Let $M,N$ be $L$-structures. We write $M\equiv N$ if $\sigma^M=\sigma^N$
for every sentence $\sigma$ in $L$.
Similarly, we write $M\preccurlyeq N$ if $\phi^M(\a)=\phi^N(\a)$ for every $\a\in M$ and formula $\phi$ in $L$.
These notions are weaker than the corresponding notions in full continuous logic.
Below, to distinguish between the two notions, the full variants
will be denoted by $\equiv_{\mathrm{CL}}$, $\preccurlyeq_{\mathrm{CL}}$ etc.
(CL abbreviates Continuous Logic).

Expressions of the form $\phi\leqslant\psi$ are called \emph{conditions}
(resp. closed conditions if $\phi,\psi$ are sentences).
$\phi=\psi$ is an abbreviation for $\{\phi\leqslant\psi, \psi\leqslant\phi\}$.
A set of closed conditions is called a \emph{theory}. More precisely, it must be called a linear (or affine) theory.
$M$ is model of $\phi\leqslant\psi$ if $\phi^M\leqslant\psi^M$.
The notions $M\vDash T$ and $T\vDash\phi\leqslant\psi$ are defined in the obvious way.
A theory $T$ is \emph{linearly satisfiable} if for every conditions $\phi_1\leqslant\psi_1,\ ...,\ \phi_n\leqslant\psi_n$ in $T$
and $0\leqslant r_1,...,r_n$, the condition $\sum_ir_i\phi_i\leqslant\sum_ir_i\psi_i$ is satisfiable.
The linear compactness theorem can be proved by the ultramean construction \cite{Bagheri-Lip}
or by Henkin's method \cite{Bagheri-Safari}.

\begin{theorem} \label{compactness}
(Linear compactness) Every linearly satisfiable theory is satisfiable.
\end{theorem}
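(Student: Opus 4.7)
The plan is to prove compactness by the \emph{ultramean} construction---the affine analog of the ultraproduct, in which a family of models is combined by integration against a finitely additive probability measure rather than selected along an ultrafilter. This matches the syntax of linear continuous logic exactly: the only connectives (scalars, sums, $\sup_x$, $\inf_x$) all interact well with real-valued integration, so a ``\L{}o\'{s} theorem by integration'' is what the language demands.

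I would proceed in five steps. \emph{Step 1.} Enlarge $L$, for each formula $\phi(x,\y)$ and each $\varepsilon>0$, by a Skolem function symbol $c_{\phi,\varepsilon}(\y)$ subject to the Henkin axiom $\sup_x\phi(x,\y)\leqslant\phi(c_{\phi,\varepsilon}(\y),\y)+\varepsilon$; this expansion preserves linear satisfiability, since any structure can be expanded by choosing $c_{\phi,\varepsilon}^M(\b)$ to $\varepsilon$-witness the supremum. \emph{Step 2.} For each finite $F=\{\phi_j\leqslant\psi_j:j\leqslant k\}\subseteq T$, consider the compact convex set
$$C_F=\overline{\mathrm{conv}}\{(\phi_j^M-\psi_j^M)_{j\leqslant k} : M \text{ an } L\text{-structure}\}\subseteq\Rn^k.$$
If $C_F\cap(-\infty,0]^k$ were empty, strict separation would produce $r\in\Rn_{\geqslant 0}^k$ with $\inf_{C_F}\langle r,\cdot\rangle>0$, contradicting linear satisfiability applied to the single condition $\sum_jr_j(\phi_j-\psi_j)\leqslant 0$. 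Hence there exist structures $M_1^F,\dots,M_{m_F}^F$ and weights $\lambda_i^F\geqslant 0$ summing to $1$ with $\sum_i\lambda_i^F(\phi_j^{M_i^F}-\psi_j^{M_i^F})\leqslant 0$ for all $j$. \emph{Step 3.} On $I=\bigsqcup_F\{1,\dots,m_F\}$ assemble a two-stage finitely additive probability measure $\mu$: locally, on the $F$-fiber, the weights $\lambda_i^F$; globally, an extension of the cofinal filter on finite subsets of $T$, obtained by Zorn. \emph{Step 4.} Form the ultramean $M_*=\prod_iM_i/\!\sim$ (indexing $M_i:=M_j^F$ for $i=(F,j)$) with $\a\sim\b$ iff $\int d(a_i,b_i)\,d\mu=0$ and $R^{M_*}([\a])=\int R^{M_i}(a_i)\,d\mu$. \emph{Step 5.} Prove the affine \L{}o\'{s} theorem
$$\phi^{M_*}([\a])=\int \phi^{M_i}(a_i)\,d\mu(i)$$
for every $L$-formula $\phi$; this, combined with Step 3, forces $\phi^{M_*}\leqslant\psi^{M_*}$ for every condition of $T$, so the metric completion of $M_*$ is the desired model.

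The main obstacle is the induction step of the \L{}o\'{s} theorem at the quantifiers, since $\int$ and $\sup$ do not commute in general. Monotonicity immediately gives $\sup_x\phi^{M_*}(x,[\a])\leqslant\int\sup_x\phi^{M_i}(x,a_i)\,d\mu$; the reverse direction is precisely what the Henkin expansion of Step 1 supplies---the class $[(c_{\phi,\varepsilon}^{M_i}(a_i))_i]\in M_*$ realizes $\phi^{M_*}\geqslant\int\sup_x\phi^{M_i}(x,a_i)\,d\mu-\varepsilon$, and letting $\varepsilon\to 0$ yields the lower bound. The remaining clauses (atomic formulas, scalar multiplication, addition) are immediate from the linearity and continuity of the integral, so once the quantifier case is settled the theorem follows and compactness is proved.
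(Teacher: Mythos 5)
The paper does not prove Theorem \ref{compactness} itself; it only cites two routes (the ultramean construction of \cite{Bagheri-Lip} and a Henkin argument in \cite{Bagheri-Safari}). Your proposal follows the first route, and its overall architecture --- finite pieces of $T$ approximately satisfied by convex combinations of structures, glued by an ultracharge, followed by the affine {\L}o\'{s} theorem (the paper's Theorem \ref{th1}) --- is the right one. However, two of your steps are flawed as written.

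First, Step 1 does not work in a Lipschitz language: a function symbol $c_{\phi,\varepsilon}(\y)$ must carry a Lipschitz constant declared in advance, and an $\varepsilon$-approximate maximizer of $\phi(x,\y)$ need not admit any Lipschitz --- or even continuous --- selection in $\y$ (witnesses can be forced to jump between distant components, or be blocked by topological obstructions), so the claimed expansion of an arbitrary structure need not exist. Fortunately the Skolemization is also unnecessary: in Step 5 you only need, for each index $i$, \emph{some} point $b_i\in M_i$ with $\phi^{M_i}(b_i,a_i)\geqslant\sup_x\phi^{M_i}(x,a_i)-\varepsilon$, and the tuple $(b_i)_i$ is already an element of the product, hence of $M_*$, with no uniformity in $i$ required; this is exactly how the quantifier step of Theorem \ref{th1} is handled. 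Second, in Step 2 you conclude that an \emph{exact} finite convex combination lands in $(-\infty,0]^k$. This does not follow: the set of achievable value-vectors $\{(\phi_j^M-\psi_j^M)_j\}$ is bounded but need not be closed (closedness here is essentially the compactness being proved), so its convex hull need not be closed either, and a point of $C_F\cap(-\infty,0]^k$ may only be a limit of convex combinations. The separation argument correctly yields that for every $\eta>0$ there are structures and weights with $\sum_i\lambda_i(\phi_j^{M_i}-\psi_j^{M_i})\leqslant\eta$ for all $j$; you should therefore index your family by pairs $(F,m)$ with tolerance $1/m$ and let the limit along the ultrafilter of Step 3 absorb the error. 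With these two repairs the argument goes through and agrees with the cited ultramean proof.
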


In particular, for a sentence $\sigma$, if $\sigma\leqslant0$ and $0\leqslant\sigma$ are
satisfiable, then $\sigma=0$ is satisfiable. An easy consequence of linear compactness is
that every model of cardinality at least 2 has arbitrarily large elementary extensions.
So, compact models may be elementarily equivalent to non-compact models.
Usual results such as elementary amalgamation, L\"{o}wenheim-Skolem
and elementary chain theorems hold as in full continuous logic.
A satisfiable theory $T$ is \emph{complete} if for each sentence $\sigma$
there is a unique $r$ such that $\sigma=r\in T$.
If $T$ is complete, either the singleton structure is its unique model or
$T$ has models of arbitrarily large cardinalities. In the first case, we say $T$ is \emph{trivial}.

Let $T$ be a satisfiable theory. Let $|\x|=n$. An \emph{$n$-type} for $T$ is a maximal set $p(\x)$ of
conditions $\phi(\x)\leqslant\psi(\x)$ such that $T\cup p(\x)$ is satisfiable.
We can explain this in a more systematic way if $T$ is complete.
Let $\mathbb D_n(T)$ (or $\mathbb D_n$ for short) be the vector space of formulas with free variables $\x$
where $\phi(\x)$, $\psi(\x)$ are identified if $\phi^M=\psi^M$ for some (and hence every) model $M\vDash T$.
In other words, $$\mathbb D_n=\{\phi^M:\ \phi(\x)\ \mbox{is\ an}\ L-\mbox{formula}\}\subseteq \mathbf{C}_b(M^n).$$
$\mathbb D_n$ is a partially ordered vector space.
If $p$ is an $n$-type, then for each $\phi(\x)$ there is a unique real number $p(\phi)$
such that $\phi=p(\phi)$ belongs to $p$.
In fact, the map $\phi\mapsto p(\phi)$ is linear and depends only to $\phi^M$.
So, thanks to linear compactness theorem, an $n$-type can be redefined as a linear
functional $p:\mathbb D_n(T)\rightarrow\Rn$ such that for every formula
$\phi(\x)$, the condition $\phi=p(\phi)$ is satisfiable in some model of $T$.
In particular, the evaluation map $tp(\a):\phi(\x)\mapsto\phi^M(\a)$ is a type called the type of $\a$.
Types of this form are called \emph{realized} types. The set of all $n$-types of $T$ is denoted by $S_n(T)$.

$\mathbb D_n$ is also equipped with the sup-norm.
So, $S_n(T)$ is exactly the state space of $\mathbb D_n$, i.e. the set of
positive linear functionals $f:\mathbb D_n\rightarrow\Rn$ such that $f(1)=1$.
In particular, if $p$ is such a functional and $p(\phi)=r$, then $r\leqslant\phi$ is satisfiable with $T$.
Otherwise, for some $\epsilon>0$, we must have that $T\vDash\phi\leqslant r-\epsilon$
which implies that $p(\phi)\leqslant r-\epsilon$.
Similarly, $\phi\leqslant r$ and hence $\phi=r$ are satisfiable with $T$.
Note that $S_n(T)$ is a convex set, i.e. if $p,q$ are $n$-types and $0\leqslant\lambda\leqslant1$,
then $\lambda p+(1-\lambda)q$ is an $n$-type.
Types over parameters from $A\subseteq M\vDash T$ are defined similarly.
$S_n(A)$ denotes the set of $n$-types over $A$.
By Banach-Alaoglu theorem, $S_n(T)$ (as well as $S_n(A)$)
is a compact subset of the unit ball of $\mathbb D_n^*$ (similarly for $S_n(A)$).
This topology is what logicians call \emph{logic topology}.
Note that the map $\a\mapsto tp(\a)$ is continuous.

\begin{definition}
\em{ A model $M\vDash T$ is called \emph{$\kappa$-saturated} if for each $A\subseteq M$ with $|A|<\kappa$,
every $p\in S_n(A)$ is realized in $M$.}
\end{definition}

$\kappa$-saturation in the full continuous logic sense has nothing to do with $\kappa$-saturation
in the present sense (e.g. compact structures are not saturated in the present sense).
The elementary diagram of a model $M$ is defined in the usual way, i.e. $$ediag(M)=\{\phi(\a)=0:\ \phi^M(\a)=0\}.$$
It is then proved (assuming $M\subseteq N$) that $M\preccurlyeq N$ if and only if $N\vDash ediag(M)$.

\begin{lemma}
Let $A\subseteq M$ and $p(\x)$ be a type over $A\subseteq M$.
Then $p$ is realized in some $M\preccurlyeq N$.
\end{lemma}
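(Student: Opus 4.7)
The plan is to apply the linear compactness theorem to the elementary diagram of $M$ together with a global extension of $p$. Introduce a fresh $n$-tuple of constants $\bar c$, expand the language by names for all elements of $M$, and consider
$$\Sigma \;:=\; ediag(M)\,\cup\,\{\phi(\bar c,\bar a)=p(\phi(\bar x,\bar a)):\phi\text{ an }L\text{-formula},\ \bar a\in A\}.$$
By the characterization of elementary extension via $ediag(M)$ stated just before the lemma, any model $N\vDash\Sigma$ satisfies $M\preccurlyeq N$ and has $\bar c^N$ realizing $p$. So it suffices to prove $\Sigma$ satisfiable, and by Theorem~\ref{compactness} it suffices to prove $\Sigma$ linearly satisfiable.

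The key intermediate step is to extend $p$ to a complete type $q\in S_n(M)$, i.e., a positive linear functional $q:\mathbb D_n(M)\to\Rn$ with $q(1)=1$, $q|_{\mathbb D_n(A)}=p$, and $q(\phi(\bar m))=\phi^M(\bar m)$ for every scalar formula $\phi(\bar m)$ with $\bar m\in M$. The existence of $q$ is a state-extension statement in the ordered Banach space $\mathbb D_n(M)$. On the subspace $\mathbb D_n(A)+\mathbb D_0(M)$ one defines a positive functional by $p$ on the first summand and by $M$-evaluation on the second; these agree on the intersection $\mathbb D_0(A)$ precisely because $p$, being a type over $A$, is consistent with the $A$-part of $ediag(M)$. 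This functional then extends positively to all of $\mathbb D_n(M)$ by a Hahn--Banach variant, yielding $q$. Once $q$ is in hand, the theory $ediag(M)\cup\{\phi(\bar c,\bar m)=q(\phi):\bar m\in M\}$ is linearly satisfiable by inspection --- any positive linear combination is a single condition whose value is computed by $q$ --- so linear compactness delivers the model $N$.

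I expect the main obstacle to be this state-extension step. The care needed is that the positive extension must simultaneously respect two independent families of constraints: the values prescribed by $p$ on $\mathbb D_n(A)$ and those prescribed by $M$-evaluation on $\mathbb D_0(M)$; their compatibility on the intersection $\mathbb D_0(A)$ is the essential place where the hypothesis ``$p$ is a type over $A\subseteq M$'' is used. With this convex-analytic step secured, the rest of the proof is a mechanical application of linear compactness.
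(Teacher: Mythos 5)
Your proof is correct, but it takes a genuinely different route from the paper's. The paper's argument is a two-line direct verification that $ediag(M)\cup p(\x)$ is linearly satisfiable: a positive combination of diagram conditions collapses to a single condition $0\leqslant\phi(\a,\b)$ true in $M$ with $\a\in A$, $\b\in M-A$, and replacing the non-$A$ parameters by $\sup_{\y}$ turns it into the condition $0\leqslant\sup_{\y}\phi(\a,\y)$, which holds in $M$, hence lies in the theory of $(M,a)_{a\in A}$ and is consistent with $p$ by the very definition of a type over $A$. You instead first extend $p$ to a complete type $q$ over all of $M$ by a positive (Kantorovich/Riesz-type) extension and only then invoke linear compactness. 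This is heavier machinery but entirely consonant with the paper's toolkit --- the Kantorovich extension theorem is exactly what the author uses in Propositions \ref{dense-realization} and \ref{sat1} --- and it yields a slightly stronger intermediate fact (every type over $A$ extends to a type over $M$) that the paper never isolates. Two small remarks on your execution: since $Th(M,m)_{m\in M}$ is complete, every parametrized sentence is identified in $\mathbb D_n(M)$ with the constant equal to its value, so $\mathbb D_0(M)$ collapses to $\Rn\cdot 1$ and your ``two independent families of constraints'' reduce to one; the compatibility check on $\mathbb D_0(A)$ is therefore vacuous. The hypothesis that $p$ is a type over $A\subseteq M$ is really used elsewhere, namely in guaranteeing that $p$ is positive for the order that $M$ induces on $\mathbb D_n(A)$ (as a subspace of $\mathbf{C}_b(M^n)$), which is what makes the order-unit extension to $\mathbb D_n(M)$ possible at all. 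With that understood, your ``by inspection'' step at the end is fine: positive combinations of diagram conditions and $q$-conditions reduce to a single condition whose satisfiability follows from positivity of $q$ together with $\inf_{\x}\eta\leqslant\eta\leqslant\sup_{\x}\eta$ in the ordered space, exactly as in the paper's Section 2 discussion of states.
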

\begin{proof} It is sufficient to show that $ediag(M)\cup p(\x)$ is linearly satisfiable.
Indeed, for each condition $0\leqslant\phi(\a,\b)$ satisfied in $M$, where $\a\in A$ and $\b\in M-A$,
the condition $0\leqslant\sup_{\y}\phi(\a,\y)$ is satisfiable with $p$ by definition.
\end{proof}
\bigskip

By usual chain arguments one shows that for every $M$ and $\kappa$,
there exists $M\preccurlyeq N$ which is $\kappa$-saturated.
Let $M\vDash T$ be an $\aleph_0$-saturated. The metric topology on $S_n(T)$ is defined by
$$d(p,q)=\inf\{d(\a,\b):\ \ \a\vDash p,\ \b\vDash q,\ \a,\b\in M\}.$$
The metric topology is complete and generally finer than the logic topology.
For every model $K\vDash T$, the map $$K^n\rightarrow S_n(T)\ \ \ \ \a\mapsto tp(\a)$$
is $1$-Lischitz.

It is also not hard to show that if $M$ is $\aleph_0$-saturated, then for every formula $\phi(\x)$
the range of $\phi^M$ is a closed interval.

Linear variant of the ultraproduct construction is the ultramean construction.
We recall that a probability charge space is a triple $(I,\mathscr A,\mu)$
where $\mathscr A$ is a Boolean algebra of subsets of $I$ and
$\mu:\mathscr A\rightarrow[0,1]$ is a finitely additive probability measure.
If $\mathscr A=P(I)$, $\mu$ is called an \emph{ultracharge}.
Let $(M_{i}, d_{i})_{i\in I}$ be a family of $L$-structures and $\mu$ be an ultracharge on $I$.
First define a pseudo-metric on $\prod_{i\in I}M_i$ by setting
$$d(a,b)=\int d_{i}(a_i,b_i)d\mu.$$
$d(a,b)=0$ is an equivalence relation. The equivalence class of $a=(a_i)$ is denoted by $[a_i]$.
Let $M$ be the set of equivalence classes.
Then, $d$ induces a metric on $M$ which is denoted again by $d$.
So, $d([a_i],[b_i])=\int d_{i}(a_i,b_i)d\mu$.
Define an $L$-structure on $(M,d)$ as follows:
$$c^M=[c^{M_i}]$$
$$F^M([a_i],...)=[F^{M_i}(a_i,...)]$$
$$R^M([a_i],...)=\int R^{M_i}(a_i,...)d\mu.$$
where $c,F,R\in L$.
One verifies that $F^M$ and $R^M$ are well-defined and $M$ is a (possibly incomplete)
$L$-structure in its own right (which may be completed by usual arguments).

The structure $M$ is called the ultramean of the structures $M_i$ and is denoted by $\prod_\mu M_i$.
An ultrafilter $\mathcal F$ corresponds to the $0-1$ valued ultracharge
$\mu$ where $\mu(A)=1$ if $A\in\mathcal F$ and $\mu(A)=0$ otherwise.
In this case, $\prod_\mu M_i$ is the ultraproduct $\prod_{\mathcal F}M_i$
and by {\L}o\'{s} theorem, for every $\sigma$ in the full continuous logic sense
one has that $\sigma^M=\lim_{i,\mathcal F}\sigma^{M_i}$.
In the ultracharge case, we have the following theorem holding only for linear formulas.

\begin{theorem} \label{th1}
For every $\phi(x_{1}, \ldots, x_{n})$ and $[a^{1}_{i}], \ldots, [a^{n}_{i}]\in M$,
$$\phi^{M}([a^{1}_{i}],\ldots, [a^{n}_{i}])=
\int\phi^{M_{i}}(a^{1}_{i},\ldots, a^{n}_{i})d\mu.$$
\end{theorem}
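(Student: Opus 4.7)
The plan is to prove the statement by induction on the complexity of $\phi$. Before starting the formula induction, I would first establish the routine term lemma: $t^M([a^1_i],\ldots,[a^k_i])=[t^{M_i}(a^1_i,\ldots,a^k_i)]$ for every $L$-term $t$, by induction on $t$, using well-definedness of constants and Lipschitz function symbols in the ultramean construction. This is needed so that when $\phi$ is atomic, the value on the left-hand side computes as advertised.

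For the atomic cases of $\phi$, the claim is essentially built into the definition of the ultramean: the clause for $d$ holds by definition of the metric, the clause for a relation symbol $R$ holds by definition of $R^M$, and constants $r$ give $r=\int r\,d\mu$. For the propositional cases $r\phi$ and $\phi+\psi$, the inductive step is immediate from linearity of the charge integral. So the entire substance of the theorem is in the quantifier steps, which is precisely where the shift from ultrafilters (where {\L}o\'{s}-style suprema survive as limits) to general ultracharges requires a new argument.

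For $\sup_x\phi$ the strategy is a two-sided bound. The easy direction: for every $[b_i]\in M$ and $\mu$-almost every $i$, $\phi^{M_i}(a^1_i,\ldots,a^n_i,b_i)\leqslant (\sup_x\phi)^{M_i}(a^1_i,\ldots,a^n_i)$; integrating and invoking the inductive hypothesis on $\phi$ yields $\phi^M([a_i],[b_i])\leqslant\int(\sup_x\phi)^{M_i}(a_i)\,d\mu$, and then taking supremum over $[b_i]$ gives $(\sup_x\phi)^M([a_i])\leqslant\int(\sup_x\phi)^{M_i}(a_i)\,d\mu$. The reverse direction uses an $\varepsilon$-selection: for any $\varepsilon>0$, choose by AC a $b_i\in M_i$ with $\phi^{M_i}(a_i,b_i)\geqslant(\sup_x\phi)^{M_i}(a_i)-\varepsilon$. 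By the inductive hypothesis applied to $\phi$,
$$(\sup_x\phi)^M([a_i])\geqslant\phi^M([a_i],[b_i])=\int\phi^{M_i}(a_i,b_i)\,d\mu\geqslant\int(\sup_x\phi)^{M_i}(a_i)\,d\mu-\varepsilon,$$
and letting $\varepsilon\to 0$ closes the bound. The case $\inf_x\phi$ is dual.

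The main obstacle I expect is the selection step in the quantifier case: one must be sure that the chosen section $(b_i)_{i\in I}$ actually defines an element of $M$, and that the bounded functions $i\mapsto(\sup_x\phi)^{M_i}(a_i)$ and $i\mapsto\phi^{M_i}(a_i,b_i)$ are integrable against $\mu$. Both concerns disappear because we are working with an ultracharge $\mu$ on $\mathscr A=P(I)$: no measurability restriction is imposed on sections, and every bounded real-valued function on $I$ is $\mu$-integrable (say in the Darboux/Daniell sense for charges). It is worth verifying that the uniform bound $\mathbf b_\phi$ from the preliminaries transfers to the sup/inf quantifier, so that the integrals stay finite uniformly in the choice of $(b_i)$, keeping the $\varepsilon$-argument sound.
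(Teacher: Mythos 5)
Your proof is correct; note that the paper states Theorem \ref{th1} without giving any proof (it is imported from the reference on linear model theory for Lipschitz structures), so there is no in-paper argument to compare against. Your induction --- the preliminary term lemma, linearity of the charge integral for $r\phi$ and $\phi+\psi$, and the two-sided bound with $\varepsilon$-selection for $\sup_x$ (sound because $\mu$ is defined on all of $P(I)$, every section $(b_i)$ gives an element $[b_i]\in M$, and $\phi^{M_i}$ is uniformly bounded by $\mathbf b_\phi$) --- is exactly the standard argument, and the points you flag as potential obstacles are the right ones and are handled correctly.
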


If $M_i=N$ for all $i$, the ultramean structure is denoted by $N^\mu$.
In this case, the map $a\mapsto [a]$ is an elementary embedding of $N$ into $N^\mu$.
The linear variant of Keisler-Shelah theorem is proved in \cite{Isomorphism}.
Taking suitable ultracharges, one obtains arbitrarily large elementary extensions of $M$
if $|M|\geqslant2$.

\section{Facial types}
A subset $K$ of a locally convex Hausdorff topological vector space $X$ is convex if for every
$0\leqslant\lambda\leqslant1$ and $p,q\in K$, one has that $\lambda p+(1-\lambda)q\in K$ .
Let $K$ be compact and convex.
A non-empty convex $F\subseteq K$ is called a face if for every $p,q\in K$ and $0<\lambda<1$,
$\lambda p+(1-\lambda)q\in F$ implies that $p,q\in F$ (one may only use $\lambda=\frac{1}{2}$).
A point $p$ is extreme if $\{p\}$ is a face. Let $T$ be a complete $L$-theory.
Then, $S_n(T)$ is a compact convex set in $\mathbb D_n^*$.
We will see that some interesting properties of $T$ are related to the extreme
points of $S_n(T)$ which we call extreme types.

In first order logic (as well as continuous logic), the set of types realized in a model
$M\vDash T$ is dense in $S_n(T)$. In linear continuous logic, the situation is different.
For example, $S_1(\textrm{PrA})$ (where PrA is the linearly complete theory of probability algebra)
is a $1$-dimensional simplex, hence isometric to the unit interval $[0,1]$ (see \cite{Bagheri-Lip}).
While, $\{0,1\}$ is a model of PrA which realizes only the extreme types.
However, we have a similar result.
First, I recall the Riesz-Markov representation theorem on arbitrary metric spaces.

\begin{theorem} \label{Riesz-Markov noncompact}
Let $(M,d)$ be a metric space and $\overline M$ be its Stone-\v{C}ech compactification.
If $\Lambda:C_b(M)\rightarrow\Rn$ is bounded linear and positive,
then there exists a unique finite Borel measure $\mu$ on $\overline M$
such that
$$\ \ \ \ \Lambda(f)=\int \bar f d\mu\ \ \ \ \ \ \ \ \ \forall f\in C_b(M)$$
where $\bar f$ denotes the extension of $f$ on $\overline{M}$.
\end{theorem}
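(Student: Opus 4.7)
The plan is to reduce the statement to the classical Riesz--Markov--Kakutani theorem for compact Hausdorff spaces by exploiting the universal property of the Stone--\v{C}ech compactification. Recall that the extension map $f\mapsto\bar f$ is an isometric order-preserving algebra isomorphism between $C_b(M)$ and $C(\overline M)$; this is, in fact, the characteristic property that makes $\overline M$ a Stone--\v{C}ech compactification. So once $\Lambda$ is transported through this isomorphism, we are in the compact setting where Riesz--Markov applies directly.

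First I would define $\bar\Lambda:C(\overline M)\rightarrow\Rn$ by $\bar\Lambda(g)=\Lambda(g\restriction M)$. Since every $g\in C(\overline M)$ restricts to some $f\in C_b(M)$ whose Stone--\v{C}ech extension is $g$ itself, $\bar\Lambda$ is well-defined; it inherits linearity, positivity, and boundedness from $\Lambda$. Then I would apply the classical Riesz--Markov--Kakutani representation theorem on the compact Hausdorff space $\overline M$: there exists a unique finite regular Borel measure $\mu$ on $\overline M$ such that $\bar\Lambda(g)=\int g\,d\mu$ for every $g\in C(\overline M)$. Applying this with $g=\bar f$ yields the desired formula $\Lambda(f)=\int\bar f\,d\mu$.

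For finiteness, note that $\mu(\overline M)=\int 1\,d\mu=\bar\Lambda(1)=\Lambda(1)\leqslant\|\Lambda\|<\infty$. For uniqueness, suppose $\mu_1,\mu_2$ both represent $\Lambda$ on $C_b(M)$; by the isomorphism $f\mapsto\bar f$, they both represent $\bar\Lambda$ on all of $C(\overline M)$, and uniqueness in the classical Riesz--Markov theorem forces $\mu_1=\mu_2$.

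The only genuinely delicate point is making sure one has the right notion of measure so that uniqueness actually holds: classical Riesz--Markov produces a unique \emph{regular} (Radon) Borel measure, and the statement should be read with this regularity built in (on a compact Hausdorff space, every finite Borel measure that is inner regular on open sets is automatically regular, so the issue is mild). Beyond that caveat, the argument is essentially a translation through the identification $C_b(M)\cong C(\overline M)$, and no further continuous-logic machinery is needed.
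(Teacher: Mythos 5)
Your proof is correct and is the standard argument. The paper does not actually prove this statement---it is recalled as a known representation theorem (citing the general convex-analysis literature)---so there is no in-paper proof to compare against; your reduction through the isometric order isomorphism $C_b(M)\cong C(\overline M)$ followed by the classical Riesz--Markov--Kakutani theorem on the compact Hausdorff space $\overline M$ is exactly the intended route. Your caveat about regularity is well taken and is in fact essential rather than mild: since $\overline M$ is generally non-metrizable when $M$ is not compact, distinct finite Borel measures on $\overline M$ can induce the same functional, so the uniqueness assertion must be read as uniqueness among regular (Radon) Borel measures, which is what the classical theorem delivers.
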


The set of all Borel probability measures on $M$ is denoted by $\mathcal P(M)$.
Also, $E_n(T)$ denotes the set of extreme types, i.e. extreme points of $S_n(T)$.

\begin{proposition} \label{dense-realization}
Let $M\vDash T$. Then, every $p\in E_n(T)$ is the limit (in the logic topology) of types realized in $M$.
\end{proposition}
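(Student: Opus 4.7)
The plan is to reduce the proposition to Milman's converse of the Krein-Milman theorem: if a compact convex set $K$ equals $\overline{\mathrm{conv}}(A)$ for some $A \subseteq K$, then every extreme point of $K$ lies in $\overline{A}$. Applied with $K = S_n(T)$ and $A = B := \{tp(\a) : \a \in M^n\}$, this yields $E_n(T) \subseteq \overline{B}$, which is exactly what the proposition asserts. So the whole task reduces to verifying the density statement $\overline{\mathrm{conv}}(B) = S_n(T)$.

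For that density, I would argue by contradiction. Suppose $p \in S_n(T) \setminus \overline{\mathrm{conv}}(B)$. Since the logic topology on $S_n(T) \subseteq \mathbb{D}_n^*$ is the weak-$*$ topology, which is locally convex Hausdorff, Hahn-Banach separation yields a formula $\phi(\x) \in \mathbb{D}_n$ and $r \in \mathbb{R}$ with $p(\phi) > r \geq q(\phi)$ for every $q \in B$. The latter inequality reads $\phi^M(\a) \leq r$ for all $\a \in M^n$, i.e.\ $(\sup_{\x} \phi)^M \leq r$. Since $\sup_{\x} \phi$ is a sentence and $T$ is complete, its value is the same in every model of $T$, so $T \vDash \sup_{\x} \phi \leq r$ as a closed condition.

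Now $\phi \leq \sup_{\x} \phi$ holds pointwise in every model, so positivity of $p$ gives $p(\phi) \leq p(\sup_{\x} \phi)$; and since $p$ assigns sentences their common $T$-value, $p(\sup_{\x} \phi) \leq r$. Thus $p(\phi) \leq r$, contradicting $p(\phi) > r$. This proves $S_n(T) = \overline{\mathrm{conv}}(B)$, and Milman's theorem then yields $E_n(T) \subseteq \overline{B}$, completing the argument.

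The main obstacle, in my view, is identifying the right separation trick: Hahn-Banach on its own produces only a formula $\phi(\x)$ separating $p$ from the realized types, and one must promote it to a \emph{sentence} by taking $\sup_{\x}$ in order to leverage the completeness of $T$ and bound $p(\phi)$ by the pointwise supremum of $\phi^M$. This is where the availability of the $\sup$ operator in linear continuous logic, combined with the completeness hypothesis on $T$, enters decisively; the absence of $\wedge,\vee$ in the syntax never becomes a problem because convex separation already provides everything one needs.
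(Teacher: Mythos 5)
Your argument is correct, and it takes a genuinely different route from the paper's. The paper proves the statement by representing an arbitrary type as integration against a Borel probability measure on the Stone--\v{C}ech compactification $\overline{M}$ (via the Kantorovich extension and Riesz--Markov theorems), then observing that an extreme type pulls back to a face of $\mathcal P(\overline M)$ whose extreme points are Dirac measures $\delta_a$ with $a\in\overline M$; approximating $a$ by points of $M$ gives the desired convergent net of realized types. You instead bypass measures entirely: Hahn--Banach separation in $(\mathbb D_n^*,w^*)$ plus positivity of types (with the passage to the sentence $\sup_{\x}\phi$ and completeness of $T$ making the bound model-independent) shows that the realized types are weak-$*$ dense in $S_n(T)$ in the convex sense, i.e. $S_n(T)=\overline{\mathrm{conv}}\{tp(\a):\a\in M^n\}$, and Milman's partial converse to Krein--Milman then places every extreme point in the closure of the realized types. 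Your approach is cleaner in that it needs no reduction to $n=1$ or to separable $M$, no compactification, and no representation theory of states; what it gives up is the extra structural information the paper's proof extracts, namely that every type (extreme or not) is a barycentre of realized types against a measure on $\overline M$, with extreme types corresponding to points of $\overline M$ itself. One small technical remark: Milman's theorem is usually stated for a compact set $A$ with $\overline{\mathrm{conv}}(A)$ compact, so you should formally apply it to $\overline{B}$ rather than to $B$; this changes nothing since $\overline{\mathrm{conv}}(\overline B)=\overline{\mathrm{conv}}(B)$ and the conclusion $E_n(T)\subseteq\overline B$ is exactly what is wanted.
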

\begin{proof}
Consider the case $n=1$. We may assume that $M$ (and hence $\overline{M}$) is separable.
Let $\zeta:\mathcal P(\overline{M})\rightarrow S_1(T)$ be the function defined by
$\zeta(\mu)=p_\mu$ where $p_\mu(\phi)=\int\overline{\phi^M} d\mu$ for all $\phi$.
Note that $\zeta$ is affine, i.e. $\zeta(\lambda\mu+(1-\lambda)\nu)=\lambda\zeta(\mu)+(1-\lambda)\nu$.
Let $p(x)\in S_1(T)$. By Kantorovich extension theorem
(\cite{Aliprantis-Inf} Th. 8.32.), the map defined by $\Lambda(\overline{\phi^M})=p(\phi)$,
for all $\phi$, extends to a positive linear functional on $C(\overline{M})$.
So, by Theorem \ref{Riesz-Markov noncompact}, there exists a probability measure
$\mu$ on $\overline{M}$ such that $p(\phi)=\int\overline{\phi^M}d\mu$
for all $\phi(x)$.
This shows that $\zeta$ is surjective.
Let $p$ be extreme. Then, $\zeta^{-1}(p)$ is a face of $\mathcal P(\overline{M})$.
Let $\nu$ be an extreme point of $\zeta^{-1}(p)$.
Then, $\nu$ is an extreme point of $\mathcal P(\overline{M})$ too.
However, the extreme points of $\mathcal P(\overline{M})$ are pointed measures, i.e.
$\nu=\delta_a$ for some $a\in\overline{M}$ (see \cite{Aliprantis-Inf} Th 15.9).
We conclude that for every $\phi$
$$p(\phi)=\int\overline{\phi^M}\ d\delta_a=\overline{\phi^M}(a).$$
Now assume $a_k\in M$ and $a_k\rightarrow a$.
Let $p_k=tp(a_k)$. Then for each $\phi\in\mathbb D_1$
$$p_k(\phi)=\phi^M(a_k)\rightarrow\overline{\phi^M}(a)=p(\phi).$$
Therefore, $p_k\rightarrow p$.
\end{proof}
\bigskip

For $M\vDash T$, let $\mathbb{T}_M$ be the complete theory of $M$ and $\mathbb{S}_n(\mathbb{T}_M)$
be the type space of $\mathbb{T}_M$, both in the full continuous logic sense.
Note that if we allow $\wedge$ in the construction of formulas, a type $p\in\mathbb{S}_n(\mathbb{T}_M)$
is a normalized positive linear functional on the vector lattice of formulas which preserves $\wedge$.

\begin{corollary} \label{compact-realization1}
If $M\vDash T$ is compact, then every $p\in E_n(T)$ is realized in $M$.
More generally, if $M$ realizes every type in $\mathbb{S}_n(\mathbb T_M)$, then $M$ realizes every extreme type in $S_n(T)$.
\end{corollary}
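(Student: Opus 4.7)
The first assertion follows directly from Proposition \ref{dense-realization}: when $M$ is compact, so is $M^n$, and since the type map $\a\mapsto tp(\a):M^n\to S_n(T)$ is continuous, the set of realized types is a continuous image of a compact set, hence closed in the logic topology on $S_n(T)$. By Proposition \ref{dense-realization}, every $p\in E_n(T)$ lies in the closure of realized types and therefore in this closed set, i.e.\ is realized in $M$.

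For the more general statement, fix $p\in E_n(T)$ and, for notational ease, take $n=1$ (the case $n>1$ is entirely analogous, with $\overline{M^n}$ in place of $\overline{M}$). The proof of Proposition \ref{dense-realization} in fact produces a point $a\in\overline{M}$ in the Stone-\v{C}ech compactification such that
$$p(\phi)=\overline{\phi^M}(a)\quad\text{for every }\phi\in\mathbb{D}_1.$$
The strategy is now to upgrade this identity from the linear fragment to the full continuous logic algebra of formulas and then invoke the hypothesis on $M$. For any full CL formula $\psi(x)$, the interpretation $\psi^M:M\to\Rn$ is bounded continuous, so it admits a unique continuous extension $\overline{\psi^M}\in C(\overline{M})$. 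Define
$$q(\psi)=\overline{\psi^M}(a).$$
Evaluation at the single point $a$ is a character on $C(\overline{M})\cong C_b(M)$, so $q$ respects all the vector-lattice operations used to form full CL formulas (including $\wedge$ and $\vee$), and hence $q\in\mathbb{S}_n(\mathbb{T}_M)$. By hypothesis, some $b\in M$ realizes $q$; restricting the identity $tp_{\mathrm{CL}}(b)=q$ back to the linear fragment $\mathbb{D}_1$ yields $tp(b)=p$, so $p$ is realized in $M$.

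The delicate point to check is that the extension map $\psi^M\mapsto\overline{\psi^M}$ commutes with the lattice operations, so that $q$ is a bona fide full CL type and not merely a positive linear functional. This boils down to the identity $\overline{(\psi_1\wedge\psi_2)^M}=\overline{\psi_1^M}\wedge\overline{\psi_2^M}$ on $\overline{M}$, which holds by uniqueness of the Stone-\v{C}ech extension: the pointwise minimum of two continuous extensions is itself continuous on $\overline{M}$ and restricts to the pointwise minimum on $M$. Once this is in place, the proof is a direct assembly of Proposition \ref{dense-realization}, the universal property of $\overline{M}$, and the assumption that $M$ realizes every type in $\mathbb{S}_n(\mathbb{T}_M)$.
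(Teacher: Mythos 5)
Your proof is correct, but your argument for the second assertion takes a genuinely different route from the paper's. (The first assertion is handled identically: compactness of $M^n$ plus continuity of $\a\mapsto tp(\a)$ makes the set of realized types closed, and Proposition \ref{dense-realization} finishes it.) For the general case, the paper argues purely topologically: the restriction map $\xi:\mathbb{S}_n(\mathbb{T}_M)\rightarrow S_n(T)$ is continuous with compact (hence closed) image, that image contains all types realized in $M$, so by Proposition \ref{dense-realization} it contains $E_n(T)$; the hypothesis then realizes a $\xi$-preimage of $p$. You instead produce an explicit lift of $p$ to a CL-type: you reach into the \emph{proof} of Proposition \ref{dense-realization} to extract the Stone--\v{C}ech point $a$ with $p(\phi)=\overline{\phi^M}(a)$, and observe that evaluation at $a$ is a character on $C(\overline{M^n})$, hence a normalized positive linear functional preserving $\wedge$, i.e.\ an element $q\in\mathbb{S}_n(\mathbb{T}_M)$ restricting to $p$. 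Both arguments are sound; yours is more constructive (it exhibits the CL-type extending $p$ concretely), while the paper's is shorter and needs only the \emph{statement} of Proposition \ref{dense-realization}, not its internal machinery. Two small points on your version: since the proof of Proposition \ref{dense-realization} begins by reducing to separable $M$, you should note that the Kantorovich/Riesz--Markov/Dirac-measure steps producing the point $a$ do not actually need separability (or else pass to a separable CL-elementary submodel, so that $\mathbb{T}_M$ is unchanged); and the cleanest justification that $q$ really lies in $\mathbb{S}_n(\mathbb{T}_M)$ is density of $M^n$ in $\overline{M^n}$, which gives finite approximate satisfiability of the conditions $\psi=q(\psi)$ in $M$ and hence consistency with $\mathbb{T}_M$ by CL-compactness --- this sidesteps any worry about whether every $\wedge$-preserving state is a type.
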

\begin{proof}
For the first part, note that the range of $\a\mapsto tp(\a)$ is closed.
For the second part, let $\xi:\mathbb{S}_n(\mathbb T_M)\rightarrow S_n(T)$ be the restriction map,
i.e. $\xi(p)$ is the restriction of $p$ to the linear part of $p$. Clearly, $\xi$ is continuous.
By Proposition \ref{dense-realization} and the assumption, the closure of $\xi(\mathbb{S}_n(\mathbb T_M))$
contains $E_n(T)$. Since $\mathbb{S}_n(\mathbb T_M)$ is compact,
we conclude that $M$ realizes all extreme types $S_n(T)$.
\end{proof}
\vspace{1mm}

In particular, if $T$ has a first order model $M$ and $\mathcal F$ is a countably incomplete ultrafilter
on a set $I$, then $M^{\mathcal F}$ realizes all types in $\mathbb{S}_n(\mathbb T_M)$ for every $n$.
So, all extreme types of $T$ are realized in a first order model of $T$.
If, furthermore, the first order theory of $M$ is $\aleph_0$-categorical, then $M$ realizes
all extreme types in $S_n(T)$. Hence, $E_n(T)$ is finite in this case.
\vspace{1mm}

A model $M$ is \emph{extremally $\aleph_1$-saturated} if for every countable $A\subseteq M$,
all extreme types in $S_n(A)$ are realized in $M$.
Every compact model is extremally $\kappa$-saturated for every $\kappa$.
Below, $\mathcal{U}(M)$ denotes the set of all ultracharges on $M$.

\begin{proposition}\label{sat1}
Let $\mathcal F$ be a countably incomplete ultrafilter on $I$ and for each $i\in I$,
$M_i$ be an $L$-structure where $L$ is countable. Then, $M=\prod_{\mathcal F}M_i$ is extremally
$\aleph_1$-saturated.
\end{proposition}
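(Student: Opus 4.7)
The plan is to combine $\aleph_1$-saturation of $M$ in the full continuous logic sense (a standard consequence of countable incompleteness of $\mathcal F$ when $L$ is countable) with Proposition \ref{dense-realization} and the closedness argument from Corollary \ref{compact-realization1}, now carried out over a countable parameter set.

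Fix a countable $A\subseteq M$ and an extreme type $p\in S_n(A)$; the goal is to realize $p$ in $M$. View $(M,a)_{a\in A}$ as a structure in the countable language $L(A)$. By the classical \L o\'s-style fact for countably incomplete CL-ultraproducts over countable languages, $M$ is $\aleph_1$-saturated in the full continuous logic sense, so every $q\in\mathbb{S}_n(\mathbb T_{(M,A)})$ is realized in $M$. Consider the restriction map $\xi:\mathbb{S}_n(\mathbb T_{(M,A)})\to S_n(A)$ sending a full-logic type to its linear part. As in the proof of Corollary \ref{compact-realization1}, $\xi$ is continuous with compact domain, and the previous sentence identifies $\xi(\mathbb{S}_n(\mathbb T_{(M,A)}))$ with the set of linear types over $A$ realized in $M$; in particular, this set is closed in $S_n(A)$.

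I would then invoke the parametric version of Proposition \ref{dense-realization} to conclude that $p$ lies in the logic-topology closure of the realized linear types over $A$, which by the previous paragraph is already equal to $\xi(\mathbb{S}_n(\mathbb T_{(M,A)}))$. Hence some $\hat p\in\mathbb{S}_n(\mathbb T_{(M,A)})$ restricts to $p$, and $\hat p$ is realized in $M$ by CL-$\aleph_1$-saturation; any such realization realizes $p$, as desired.

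The main obstacle I anticipate is justifying the parametric version of Proposition \ref{dense-realization}, since its statement is over $T$ with no parameters and its proof assumes $M$ separable (needed for the Stone-\v{C}ech/Riesz--Markov step). Since $L(A)$ remains countable, one passes to a separable $L(A)$-elementary submodel $M_0\preccurlyeq(M,a)_{a\in A}$ containing $A$ (by the linear L\"owenheim--Skolem theorem) and applies the original argument there: extreme types over $A$ are logic-topology limits of types realized in $M_0$, which are a fortiori realized in $M$. Apart from this adaptation, everything reduces to the two ingredients already available: CL-$\aleph_1$-saturation of countably incomplete ultraproducts and the density/closedness argument from Corollary \ref{compact-realization1}.
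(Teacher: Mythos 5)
Your argument is correct, but it takes a genuinely different route from the paper's. You reduce everything to two ingredients: (a) the standard fact that a countably incomplete ultraproduct of metric structures over a countable language is $\aleph_1$-saturated in the full continuous logic sense, and (b) the density statement of Proposition \ref{dense-realization} relativized to a countable parameter set, combined with the compactness/closedness trick from Corollary \ref{compact-realization1} (the realized linear types over $A$ form the continuous image of the compact space $\mathbb{S}_n(\mathbb T_{(M,A)})$, hence a closed set containing all extreme types in its closure). Your handling of the separability reduction for the parametric version of Proposition \ref{dense-realization} is the right fix and is legitimate since $L(A)$ stays countable and passing to a separable elementary submodel containing $A$ preserves the theory and hence the type space. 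The paper instead gives a self-contained construction: it represents the extreme type $p$ by an ultracharge on $M$ via the Kantorovich extension on $\ell^\infty(M)$, shows the representing ultracharges form a closed face of $\mathcal U(M)$ whose extreme points are $0$--$1$ valued (i.e.\ ultrafilters $\mathcal D$ on $M$), so that $p(\phi)=\lim_{\mathcal D,x}\phi^M(x)$, and then realizes $p$ by an explicit diagonal argument using a countable axiomatization of $p$ and a decreasing chain $I_1\supseteq I_2\supseteq\cdots$ in $\mathcal F$ with empty intersection. In effect the paper re-proves a sharper, ultracharge-based form of the density statement (valid without separability) and does the saturation step by hand, whereas you outsource that step to the CL-saturation theorem; your version is shorter if one grants that external theorem, while the paper's version stays inside its own ultracharge framework and makes explicit where countable incompleteness and countability of $L$ enter. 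One small terminological point: the saturation of countably incomplete ultraproducts is a separate theorem, not an instance of {\L}o\'{s}'s theorem, so you should cite it as such.
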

\begin{proof}
If $a^1,a^2,...\in M$, then $(M,a^1,a^2,...)\simeq\prod_{\mathcal F}(M_i,a^1_i,a^2_i,...)$.
So, we may forget the parameters and show directly that every extreme type
$p(\x)$ of $Th(M)$ is realized in $M$. For simplicity assume $|\x|=1$.
Let $$V=\big\{\wp\in\mathcal{U}(M)|\ \ \ p(\phi)=\int\phi^M(x)d\wp\ \ \ \ \forall\phi\big\}.$$
We may consider $p$ as a function defined on the space of functions $\phi^M(x)$.
Then, by Kantorovich extension theorem (\cite{Aliprantis-Inf}, Th. 8.32)
$p$ extends to a positive linear functional $\bar p$ on $\ell^{\infty}(M)$.
Then, $\bar p$ is represented by integration over an ultracharge on $M$
so that $V$ is non-empty.
Moreover, $V$ is a closed face of $\mathcal{U}(M)$.
In particular, suppose $\lambda\mu+a(1-\lambda)\nu=\wp\in V$ where $\lambda\in(0,1)$.
Define types $p_\mu$, $p_\nu$ by setting for each $\phi(x)$,\
$p_\mu(\phi)=\int\phi^M d\mu$ and $p_\nu(\phi)=\int\phi^M d\nu$.
Then, $\lambda p_\mu+(1-\lambda)p_\nu=p$.
We have therefore that $p_\mu=p_\nu=p$ and hence $\mu,\nu\in V$.

Let $\wp$ be an extreme point of $V$.
Then, $\wp$ is an extreme point of $\mathcal{U}(M)$ and hence corresponds to
an ultrafilter, say $\mathcal D$ (not to be confused with the ultrafilter $\mathcal F$ on $I$).
We have therefore that $$\ \ \ \ p(\phi)=\int_M\phi^M(x)d\wp=
\lim_{\mathcal D,x}\phi^M(x) \ \ \ \ \ \ \ \forall\phi.\hspace{12mm} (*)$$
Since the language is countable, $p$ is axiomatized by a countable set of conditions
say $$p(x)\equiv\{0\leqslant\phi_1(x),\ 0\leqslant\phi_2(x),\ \ldots\ \}.$$
Let $$X_n=\big\{i\in I|\ \ \ -\frac{1}{n}<\sup_x\bigwedge_{k=1}^n\phi_k^{M_i}(x)\ \big\}.$$
Since $\mathcal D$ is an ultrafilter, there exists $a\in M$ such that
$-\frac{1}{n}<\phi^M_k(a)$ for $k=1,...,n$.
We have therefore that $X_n\in\mathcal F$.
Let $I_1\supseteq I_2\supseteq\cdots$ be a chain such that
$I_n\in\mathcal F$ and $\bigcap_n I_n=\emptyset$.
Then, $Y_n=I_n\cap X_n\in\mathcal F$.
Also, $Y_n$ is decreasing and $\bigcap_n Y_n=\emptyset$.
For $i\not\in Y_1$ let $a_i\in M_i$ be arbitrary.
For $i\in Y_1$, take the greatest $n_i$ such that $i\in Y_{n_i}$
and let $a_i\in M_i$ be such that $-\frac{1}{n_i}\leqslant\phi_k^{M_i}(a_i)$
for $k=1,...,n_i$. Let $a=[a_i]$. Then, for every $n$, if $i\in Y_n$,
we have that $n\leqslant n_i$ and hence $-\frac{1}{n}\leqslant\phi_k^{M_i}(a_i)$ for $k=1,...,n$.
We conclude that $0\leqslant\phi_k^M(a)$ for every $k\geqslant1$, i.e. $a$ realizes $p$.
\end{proof}
\bigskip

Now, we extend a bit the framework of logic and allow arbitrary (maybe uncountable) sets of individual variables.
Every formula uses a finite number of variables as before.
If $\x$ is a (possibly infinite) tuple of variables, then $D_{\x}(T)$ is the normed space of all
formulas (up to $T$-equivalence) whose free variables are included in $\x$.
Also, $S_{\x}(T)$ is the compact convex set consisting of all complete types on $\x$, i.e.
positive linear functionals $p:D_{\x}(T)\rightarrow\Rn$ with $p(1)=1$.

Let $\Gamma(\x)$ be a set of conditions satisfiable with $T$ (a partial type).
Complete types can be still regarded as partial types.
We say $\Gamma$ is a face of $S_{\x}(T)$ if the set $$\{p\in S_{\x}(T):\ \Gamma\subseteq p\}$$ is a face of $S_{\x}(T)$.
Note that if $\Gamma(\x,\y)$ is satisfiable and $\a$ realizes $\Gamma|_{\x}$ then $\Gamma(\a,\y)$ is satisfiable.

\begin{proposition}\label{ext2ext1}
Let $\Gamma(\x,\y)$ be a face of $S_{\x\y}(T)$. Then $\Gamma|_{\x}$ is a face of $S_{\x}(T)$.
In particular, the image of an extreme type under the restriction map $S_{n+1}(T)\rightarrow S_n(T)$ is extreme.
\end{proposition}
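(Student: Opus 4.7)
The plan is to identify the set of types extending $\Gamma|_\x$ with $\pi(F)$, where $F=\{q\in S_{\x\y}(T):\Gamma\subseteq q\}$ is the face corresponding to $\Gamma$ and $\pi:S_{\x\y}(T)\to S_\x(T)$ is the continuous affine restriction map (dual to the inclusion $D_\x(T)\hookrightarrow D_{\x\y}(T)$), and then verify $\pi(F)$ is a face. The remark immediately preceding the proposition shows that $p\in S_\x(T)$ extends $\Gamma|_\x$ exactly when $p=\pi(q)$ for some $q\in F$; thus the set of complete $\x$-types extending $\Gamma|_\x$ is $\pi(F)$, which is convex and non-empty.

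For the face property, take $p=\lambda p_1+(1-\lambda)p_2\in\pi(F)$ with $\lambda\in(0,1)$, and pick $q\in F$ with $\pi(q)=p$. The key claim is a lifting: there exist $q_1,q_2\in S_{\x\y}(T)$ with $\pi(q_i)=p_i$ and $q=\lambda q_1+(1-\lambda)q_2$. Once this is established, the face property of $F$ forces $q_1,q_2\in F$, and hence $p_i=\pi(q_i)\in\pi(F)$, as required.

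To establish the lifting I would apply Proposition \ref{dense-realization}: represent $q$ by a probability measure $\nu$ on $\overline{M^{n+m}}$ for a suitable $M\vDash T$, whose pushforward along the coordinate projection represents $p$. Disintegrating $\nu$ over that pushforward yields conditional measures $\{\nu^{\a}\}_{\a}$; choosing representing measures $\mu_1,\mu_2$ of $p_1,p_2$ with $\lambda\mu_1+(1-\lambda)\mu_2=\pi_*\nu$, one sets $\nu_i=\int\nu^{\a}d\mu_i(\a)$. Then $\pi_*\nu_i$ represents $p_i$ and $\lambda\nu_1+(1-\lambda)\nu_2=\nu$, so the types $q_i(\phi)=\int\overline{\phi^M}d\nu_i$ provide the lifting. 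The main obstacle is verifying that the disintegration behaves well on the Stone-\v{C}ech compactification and that compatible representing measures $\mu_i$ can be selected (representing measures are not unique for non-extreme types); an alternative is a Hahn--Banach sandwich argument exploiting the bi-cofinality of $D_\x(T)$ in $D_{\x\y}(T)$ via $\inf_\y\phi\leqslant\phi\leqslant\sup_\y\phi$, which controls both positivity and the upper bound by $q$ needed for the extension. The ``in particular'' claim is then immediate: for extreme $\bar q\in S_{n+1}(T)$, $\{\bar q\}$ is a face, so $\pi(\{\bar q\})=\{\pi(\bar q)\}$ is a face of $S_n(T)$, i.e.\ $\pi(\bar q)$ is extreme.
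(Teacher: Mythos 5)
Your reduction of the proposition to a lifting claim --- given $q\in F$ with $\pi(q)=\lambda p_1+(1-\lambda)p_2$, find $q_1,q_2$ with $\pi(q_i)=p_i$ and $q=\lambda q_1+(1-\lambda)q_2$ --- identifies the right key lemma, and it is the same skeleton as the paper's argument. The problem is that you do not actually prove it, and this is where all the content lies: the lifting property fails for general continuous affine surjections of compact convex sets. (Take $K$ the triangle with vertices $v_1,v_2,v_3$ and the affine surjection onto $[0,1]$ sending $v_1\mapsto 0$, $v_2\mapsto 1$, $v_3\mapsto \tfrac12$; then $v_3$ lies over $\tfrac12=\tfrac12\cdot 0+\tfrac12\cdot 1$ but, being extreme, admits no nontrivial decomposition at all.) So the claim must be extracted from the specific structure of the restriction map, and cannot be waved through. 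Your first route has a concrete obstruction of exactly this kind: $\pi_*\nu$ may be a Dirac measure, hence an extreme point of $\mathcal P(\overline{M^n})$, even though the type it represents is not extreme in $S_n(T)$, in which case no representing measures $\mu_1,\mu_2$ of $p_1,p_2$ with $\lambda\mu_1+(1-\lambda)\mu_2=\pi_*\nu$ exist; and disintegration over the non-metrizable Stone--\v{C}ech compactification raises further measurability issues you do not address. Your second route (Hahn--Banach with the sandwich $\inf_\y\phi\leqslant\phi\leqslant\sup_\y\phi$) points at the correct structural reason but is only named, not carried out.

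The paper performs precisely this lifting, concretely: it realizes $p_1,p_2,p,q$ as $tp(\a_1)$, $tp(\a_2)$, $tp(\a)$, $tp(\a,\b)$ in a sufficiently saturated model (with $\lambda=\tfrac12$, which suffices for faces), and shows that the set $\Sigma(\bar u,\bar v)$ of conditions $\tfrac12\phi(\a_1,\bar u)+\tfrac12\phi(\a_2,\bar v)=\phi^M(\a,\b)$ is satisfiable. Satisfiability follows from linear compactness because $\Sigma$ is closed under positive linear combinations and each single condition is satisfiable: the infimum over $(\bar u,\bar v)$ of $\tfrac12\phi(\a_1,\bar u)+\tfrac12\phi(\a_2,\bar v)$ splits, by separation of variables, as $\tfrac12\inf_{\bar u}\phi^M(\a_1,\bar u)+\tfrac12\inf_{\bar v}\phi^M(\a_2,\bar v)=\inf_{\y}\phi^M(\a,\y)\leqslant\phi^M(\a,\b)$, and dually for the supremum. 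A realization $(\c,\e)$ then gives $\tfrac12 tp(\a_1,\c)+\tfrac12 tp(\a_2,\e)=tp(\a,\b)$, and the face property of $\Gamma$ finishes. This computation is exactly what would make your ``sandwich'' extension argument go through; you would need to write it out at this level of detail for the proof to be complete.
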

\begin{proof} Let $M\vDash T$ be $\kappa$-saturated where $|\x|+|\y|+\aleph_0<\kappa$.
Assume that $$\frac{1}{2}tp(\a_1)+\frac{1}{2}tp(\a_2)=tp(\a)\in\Gamma|_{\x}.\ \ \ \ \ \ \ (*)$$
where $\a_1,\a_2,\a\in M$.
Since $\Gamma(\a,\y)$ is satisfiable, by saturation, there exists $\b$ such that
$(\a,\b)\vDash\Gamma(\x,\y)$. Let $\Sigma(\bar{u},\bar{v})$ be the set of all conditions
of the form $$\frac{1}{2}\phi(\a_1,\bar{u})+\frac{1}{2}\phi(\a_2,\bar{v})=\phi^M(\a,\b).$$
where $\phi(\x,\y)$ is an $L$-formula.
$\Sigma$ is closed under linear combinations. Let
$$\theta(\bar{u},\bar{v})=\frac{1}{2}\phi(\a_1,\bar{u})+\frac{1}{2}\phi(\a_2,\bar{v}).$$
By the assumption $(*)$,
$$\inf_{\bar{u}\bar{v}}\theta^M(\bar{u},\bar{v})=\frac{1}{2}\inf_{\bar{u}}\phi^M(\a_1,\bar{u})+\frac{1}{2}\inf_{\bar{v}}\phi^M(\a_2,\bar{v})
=\inf_{\y}\phi^M(\a,\y)\leqslant\phi^M(\a,\b)$$
Similarly, one has that $\phi^M(\a,\b)\leqslant\sup_{\bar{u}\bar{v}}\theta^M(\bar{u},\bar{v})$.
This shows that $\theta(\bar{u},\bar{v})=\phi^M(\a,\b)$ is satisfiable in $M$.
By linear compactness, $\Sigma$ is satisfiable in $M$. Let $(\c,\e)\vDash\Sigma$.
Then, we have that $$\frac{1}{2}tp(\a_1,\c)+\frac{1}{2}tp(\a_2,\e)=tp(\a,\b)\in\Gamma(\x,\y).$$
Since $\Gamma$ is a face, we must have that $tp(\a_1,\c), tp(\a_2,\e)\in\Gamma(\x,\y)$.
Restricting to $\x$, we conclude that $tp(\a_1), tp(\a_2)\in\Gamma|_{\x}$.
\end{proof}
\bigskip

It is easy to verify that $\Gamma(\x)$ is a face if and only if $\Gamma|_{\y}$ is a face for every finite $\y\subseteq\x$.
In particular, $p(\x)$ is extreme if and only if $p|_{\y}$ is extreme for every finite $\y\subseteq\x$.

In a compact convex set, a point $p$ is exposed if it is the unique maximizer of a non-zero continuous
linear functional (see \cite{Aliprantis-Inf}, 7.15). Every exposed point is extreme.
Exposed types may be regarded as the linear variant of principal types.
However, extreme types have more flexibility and behave similarly.
We say a tuple $\a\in M$ is extreme over $A\subseteq M$ if $tp(\a/A)$ is extreme in $S_n(A)$.
In particular, $\a$ is extreme if it is extreme over $\emptyset$.
As stated above, if $\b$ realizes $p(\x,\y)|_{\y}$, then $p(\x,\b)$ is a type.
Also, if $(\a,\b)$ realizes $p(\x,\y)$, then
$$p(\x,\b)(\phi(\x,\b))=tp(\a/\b)(\phi(\x,\b))=\phi^M(\a,\b)=p(\phi(\x,\y)).$$

\begin{proposition} \label{extreme3}
$\a\b\in M$ is extreme if and only if $\b$ is extreme and $\a$ is extreme over $\b$.
In particular, if $\a$ is extreme, the restriction map $S_n(\a)\rightarrow S_n(\emptyset)$
takes extreme types to extreme types.
\end{proposition}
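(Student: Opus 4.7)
The plan is to prove the biconditional in each direction separately and then deduce the ``in particular'' clause from them together with Proposition \ref{ext2ext1}.

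For the forward direction, assume $tp(\a\b)$ is extreme in $S_{|\a|+|\b|}(T)$. Extremity of $tp(\b)$ is immediate from Proposition \ref{ext2ext1} applied to the restriction $S_{|\a|+|\b|}(T)\to S_{|\b|}(T)$. For $tp(\a/\b)$, suppose $tp(\a/\b)=\tfrac12 p_1+\tfrac12 p_2$ with $p_1,p_2\in S_n(\b)$. Working in a sufficiently saturated $M\vDash T$, realize each $p_i$ by some $\a_i\in M$. For every $L$-formula $\phi(\x,\y)$, the averaging identity $\tfrac12\phi^M(\a_1,\b)+\tfrac12\phi^M(\a_2,\b)=\phi^M(\a,\b)$ reads $\tfrac12 tp(\a_1\b)+\tfrac12 tp(\a_2\b)=tp(\a\b)$, so extremity forces $tp(\a_i\b)=tp(\a\b)$ and hence $p_i=tp(\a_i/\b)=tp(\a/\b)$.

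For the reverse direction, assume both $tp(\b)$ and $tp(\a/\b)$ are extreme, and suppose $tp(\a\b)=\tfrac12 q_1+\tfrac12 q_2$ in $S_{|\a|+|\b|}(T)$. Restriction to $\y$ gives $tp(\b)=\tfrac12 q_1|_\y+\tfrac12 q_2|_\y$, so extremity of $tp(\b)$ forces $q_i|_\y=tp(\b)$. The key step is to push each $q_i$ down to a functional $\tilde q_i$ on $\mathbb D_n(T_\b)$ by
$$\tilde q_i(\phi(\x,\b)):=q_i(\phi(\x,\y)).$$
The main technical obstacle is well-definedness of $\tilde q_i$: if $\phi(\x,\b)=\psi(\x,\b)$ as elements of $\mathbb D_n(T_\b)$, then the $\y$-formula $\sup_\x(\phi-\psi)$ vanishes at $\b$, so $q_i|_\y=tp(\b)$ annihilates it; combining with the pointwise bound $\phi-\psi\leqslant\sup_\x(\phi-\psi)$ in $\mathbb D_{|\a|+|\b|}(T)$ and positivity of $q_i$ gives $q_i(\phi)\leqslant q_i(\psi)$, and the symmetric argument yields equality. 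An analogous use of $\inf_\x\phi$ shows $\tilde q_i$ is positive, while $\tilde q_i(1)=1$ is trivial, so $\tilde q_i\in S_n(\b)$. By construction $\tfrac12\tilde q_1+\tfrac12\tilde q_2=tp(\a/\b)$, and extremity of $tp(\a/\b)$ forces $\tilde q_i=tp(\a/\b)$, which unwinds to $q_i=tp(\a\b)$.

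For the ``in particular'' clause, let $\a$ be extreme and let $p\in S_n(\a)$ be extreme. Realize $p$ by a tuple $\b$ in a sufficiently saturated model of $T$; then $tp(\b/\a)=p$ and $tp(\a)$ are both extreme, so the reverse direction gives $tp(\b\a)$ extreme in $S_{n+|\a|}(T)$, and Proposition \ref{ext2ext1} then yields $p|_\emptyset=tp(\b)$ extreme. The crux of the whole argument is the well-definedness of the push-down $\tilde q_i$, which relies essentially on the availability of the $\sup_\x$ and $\inf_\x$ connectives in linear continuous logic together with the extremity-enforced identity $q_i|_\y=tp(\b)$.
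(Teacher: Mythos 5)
Your proof is correct and follows essentially the same route as the paper: the forward direction realizes the two summands of $tp(\a/\b)$ and invokes extremity of the joint type, while the reverse direction restricts to $\y$, uses extremity of $tp(\b)$ to identify the restrictions, and pushes the $q_i$ down to types over $\b$. The only difference is that you verify well-definedness and positivity of the push-down $\tilde q_i$ explicitly via $\sup_\x$ and $\inf_\x$, a fact the paper simply records in the remarks preceding the proposition.
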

\begin{proof}
We may assume $M$ is $\aleph_0$-saturated and that $|\a|=|\b|=1$. Let $p(x,y)=tp(a,b)$.

$\Rightarrow$:\ \ That $b$ is extreme is a consequence of Proposition \ref{ext2ext1}.
Suppose that $$tp(a/b)=\frac{1}{2}p_1(x,b)+\frac{1}{2}p_2(x,b).$$
Assume $c,e\in M$ realize $p_1(x,b)$, $p_2(x,b)$ respectively.
Then, for each $\phi(x,y)$
$$p(\phi)=\phi^M(a,b)=tp(a/b)(\phi(x,b))=\frac{1}{2}\phi^M(c,b)+\frac{1}{2}\phi^M(e,b).$$
This means that 
$$p=\frac{1}{2}tp(c,b)+\frac{1}{2}tp(e,b).$$
Since, $p$ is extreme, $p(x,y)=tp(c,b)=tp(e,b)$ and hence $p(x,b)=p_1(x,b)=p_2(x,b)$.
\bigskip

$\Leftarrow$: Assume $p(x,y)=\frac{1}{2}p_1(x,y)+\frac{1}{2}p_2(x,y)$.
Since $b$ is extreme, by restricting these types to $y$,
we conclude that $tp(b)=p|_y=p_1|_y=p_2|_y$. Therefore, $p_1(x,b)$ and $p_2(x,b)$ are realizable types
and we have that
$$tp(a/b)=p(x,b)=\frac{1}{2}p_1(x,b)+\frac{1}{2}p_2(x,b).$$
Since $a$ is extreme over $b$, we have that
$p(x,b)=p_1(x,b)=p_2(x,b)$. Since this type is realized by $a$,
we conclude that $p=p_1=p_2$ which means that $p$ is extreme.
\end{proof}

\begin{lemma} \label{facetoface}
Let $\Gamma(\x,\y)$ be a face and $\b\in M$. If $\Gamma(\x,\b)$ is satisfiable, it is a face of $S_{\x}(\b)$.
\end{lemma}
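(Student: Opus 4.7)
The strategy is to reduce the face condition for $\Gamma(\x,\b)$ in $S_{\x}(\b)$ to the already-known face condition for $\Gamma(\x,\y)$ in $S_{\x\y}(T)$ by a direct lifting argument. Non-emptiness of $\{q\in S_{\x}(\b):\Gamma(\x,\b)\subseteq q\}$ is exactly the satisfiability hypothesis, and convexity is automatic: every condition in $\Gamma(\x,\b)$ is of the form $\phi(\x,\b)\leqslant\psi(\x,\b)$, which translates to a linear inequality preserved under convex combinations of positive linear functionals. So the actual content is the non-splitting property.

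Suppose $p=\frac{1}{2}p_1+\frac{1}{2}p_2$ in $S_{\x}(\b)$ with $\Gamma(\x,\b)\subseteq p$. Choose $N\succcurlyeq M$ sufficiently saturated and realizations $\a\vDash p$, $\a_1\vDash p_1$, $\a_2\vDash p_2$ in $N$. For every parameter-free $L$-formula $\phi(\x,\y)$, evaluating the three types on the parameterized formula $\phi(\x,\b)$ gives
$$\phi^N(\a,\b)=p(\phi(\x,\b))=\tfrac{1}{2}p_1(\phi(\x,\b))+\tfrac{1}{2}p_2(\phi(\x,\b))=\tfrac{1}{2}\phi^N(\a_1,\b)+\tfrac{1}{2}\phi^N(\a_2,\b),$$
which is precisely the relation $tp(\a,\b)=\frac{1}{2}tp(\a_1,\b)+\frac{1}{2}tp(\a_2,\b)$ in $S_{\x\y}(T)$. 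Since $\Gamma(\x,\b)\subseteq p$, the parameter-free type $tp(\a,\b)$ contains $\Gamma(\x,\y)$, so it lies in the face of $S_{\x\y}(T)$ defined by $\Gamma$. The face hypothesis on $\Gamma(\x,\y)$ then forces $tp(\a_i,\b)$ to contain $\Gamma(\x,\y)$ for $i=1,2$; equivalently $(\a_i,\b)\vDash\Gamma(\x,\y)$, hence $\a_i\vDash\Gamma(\x,\b)$ and $\Gamma(\x,\b)\subseteq p_i$, as required.

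The only real subtlety is the bookkeeping between types over the parameter $\b$ and parameter-free types in $S_{\x\y}(T)$, exploiting that substitution of $\b$ into a formula $\phi(\x,\y)$ commutes with convex combinations of linear functionals. This is essentially the same device used in the proof of Proposition \ref{ext2ext1}, but simpler here because we do not need linear compactness to construct witnesses --- the three realizations $\a,\a_1,\a_2$ exist simultaneously in a single saturated elementary extension.
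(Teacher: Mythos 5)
Your proof is correct and follows essentially the same route as the paper's: realize the types in a sufficiently saturated model, observe that the convex decomposition of $tp(\a/\b)$ pulls back under substitution to the decomposition $tp(\a,\b)=\tfrac{1}{2}tp(\a_1,\b)+\tfrac{1}{2}tp(\a_2,\b)$ of parameter-free types, and apply the face hypothesis on $\Gamma(\x,\y)$. The paper compresses the substitution bookkeeping into a single ``this means that,'' but the argument is the same.
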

\begin{proof} We may assume $M$ is $\aleph_0$-saturated and $|\x|=1$. Let $a$ realize $\Gamma(x,\b)$.
Suppose that $a_1,a_2\in M$ and $$\frac{1}{2}tp(a_1/\b)+\frac{1}{2}tp(a_2/\b)=tp(a/\b)\in\Gamma(x,\b).$$
This means that $$\frac{1}{2}tp(a_1,\b)+\frac{1}{2}tp(a_2,\b)=tp(a,\b)\in\Gamma(x,\y).$$
Since $\Gamma(x,\y)$ is a face, we conclude that $tp(a_1,\b),\ tp(a_2,\b)\in\Gamma(x,\y)$.
Hence, $$tp(a_1/\b),\ tp(a_2/\b)\in\Gamma(x,\b).$$
\end{proof}

\begin{proposition}
$\b$ is extreme over $\a$ if and only if $\a$ is extreme over $\b$.
\end{proposition}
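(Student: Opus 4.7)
By the symmetry of the statement in $\a$ and $\b$, the plan is to prove only one implication: assume $\b$ is extreme over $\a$ and deduce that $\a$ is extreme over $\b$. The strategy will mirror the proofs of Proposition~\ref{ext2ext1} and Lemma~\ref{facetoface}: use saturation to realize a hypothetical decomposition of $tp(\a/\b)$, and then apply the $\Sigma$-compactness trick to convert it into a decomposition of $tp(\b/\a)$ in $S_{|\y|}(\a)$ to which the extremity hypothesis applies directly.

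I would fix an $\aleph_0$-saturated $M\vDash T$ containing $\a,\b$ and suppose $tp(\a/\b)=\tfrac12 q_1+\tfrac12 q_2$ in $S_{|\x|}(\b)$, with the goal of showing $q_1=q_2=tp(\a/\b)$. By saturation, realize $q_1,q_2$ by tuples $\a_1,\a_2\in M$, yielding the averaging identity $\phi^M(\a,\b)=\tfrac12\phi^M(\a_1,\b)+\tfrac12\phi^M(\a_2,\b)$ for every $\phi(\x,\y)$, equivalently $tp(\a,\b)=\tfrac12 tp(\a_1,\b)+\tfrac12 tp(\a_2,\b)$ in $S_{|\x\y|}(T)$.

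Next I would form the partial type $\Sigma(\bar u_1,\bar u_2)$ with $|\bar u_i|=|\y|$, consisting of the main family of conditions
$$\tfrac12\phi(\a,\bar u_1)+\tfrac12\phi(\a,\bar u_2)=\phi^M(\a,\b) \qquad (\phi(\x,\y)\text{ an }L\text{-formula})$$
together with auxiliary equations linking $\bar u_i$ to $(\a_i,\b)$, for instance $\phi(\a_i,\bar u_i)=\phi^M(\a_i,\b)$ for all $\phi$. Satisfiability of each single condition follows from the averaging identity applied to $\inf_\y\phi$ and $\sup_\y\phi$ (as in the proof of Proposition~\ref{ext2ext1}), and $\Sigma$ is closed under linear combinations, so by linear compactness and saturation it is realized by some $\b_1,\b_2\in M$. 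The main family then reads $tp(\b/\a)=\tfrac12 tp(\b_1/\a)+\tfrac12 tp(\b_2/\a)$, and the extremity of $\b$ over $\a$ forces $tp(\b_i/\a)=tp(\b/\a)$. Combining this with the auxiliary equations should pin down $tp(\a_i,\b)=tp(\a,\b)$, whence $q_i=tp(\a/\b)$.

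The hard part will be the correct design of the auxiliary conditions in $\Sigma$. A naive automorphism argument (``move $\a_i$ to $\a$'' inside a saturated $M$) fails because the $\a_i$ need not share the $\emptyset$-type of $\a$: restricting the decomposition to $\x$ only gives $tp(\a)=\tfrac12 tp(\a_1)+\tfrac12 tp(\a_2)$ and not equality of these types. The auxiliary conditions must therefore be engineered so that, once extremity of $\b$ over $\a$ collapses the $\b_i$ onto the type of $\b$ over $\a$, the collapse propagates backwards to pin down $tp(\a_i,\b)$, bypassing the need for matching base types.
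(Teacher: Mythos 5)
There is a genuine gap, and you have in fact pointed at it yourself. Your main family of conditions is exactly the $\Sigma$ used in the paper's proof, and its satisfiability (via $\inf_{\y}\phi$, $\sup_{\y}\phi$ and linear closedness) is fine. The problem is that the choice $\bar u_1=\bar u_2=\b$ already satisfies every condition in your $\Sigma$, auxiliary ones included: the main conditions become $\frac{1}{2}\phi^M(\a,\b)+\frac{1}{2}\phi^M(\a,\b)=\phi^M(\a,\b)$ and the auxiliary ones become $\phi^M(\a_i,\b)=\phi^M(\a_i,\b)$. So linear compactness is entitled to return $\b_1=\b_2=\b$; the induced decomposition of $tp(\b/\a)$ is then the trivial one, extremity of $\b$ over $\a$ yields nothing, and the auxiliary equations are tautologies that cannot ``pin down'' $tp(\a_i,\b)=tp(\a,\b)$. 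Even for a nontrivial realization, the implication you need at the end, namely
$$\phi^M(\a,\b_i)=\phi^M(\a,\b)\ \mbox{and}\ \phi^M(\a_i,\b_i)=\phi^M(\a_i,\b)\ (\forall\phi)\ \Longrightarrow\ \phi^M(\a_i,\b)=\phi^M(\a,\b)\ (\forall\phi),$$
simply does not hold: nothing in $\Sigma$ transfers the assumed non-collapse $q_1\neq q_2$ (i.e.\ $\phi^M(\a_1,\b)\neq\phi^M(\a_2,\b)$ for some $\phi$) into any statement about $\bar u_1,\bar u_2$ over $\a$, nor conversely.

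The obvious repair --- adding the conditions $\phi(\a,\bar u_1)=\phi^M(\a_1,\b)$ and $\phi(\a,\bar u_2)=\phi^M(\a_2,\b)$, which would force $tp(\a,\b_i)=tp(\a_i,\b)$ and so transfer distinctness in both directions --- founders on exactly the obstruction you flagged for the automorphism argument: satisfiability of $\phi(\a,\bar u_1)=\phi^M(\a_1,\b)$ requires $\inf_{\y}\phi^M(\a,\y)\leqslant\phi^M(\a_1,\b)$, and since only the \emph{average} of $tp(\a_1)$ and $tp(\a_2)$ equals $tp(\a)$, one can have $\inf_{\y}\phi^M(\a_1,\y)<\inf_{\y}\phi^M(\a,\y)$, so this is not guaranteed. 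Thus the ``hard part'' you defer is the entire content of the proposition. For comparison, the paper argues the contrapositive with precisely your main family of conditions and no auxiliary ones, concluding $tp(\b/\a)=\frac{1}{2}tp(\c/\a)+\frac{1}{2}tp(\e/\a)$ for a realization $(\c,\e)$ of $\Sigma$; it likewise never exhibits a formula separating $tp(\c/\a)$ from $tp(\e/\a)$, so the step you could not fill is also the one the printed proof leaves implicit. As it stands, your proposal is not a proof.
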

\begin{proof}
Assume $\a$ is not extreme over $\b$. Let
$$tp(\a/\b)=\frac{1}{2}p_1(\x,\b)+\frac{1}{2}p_2(\x,\b).$$
Let $\Sigma(\bar{u},\bar{v})$ be the family of all conditions of the form
$$\frac{1}{2}\phi(\a,\bar{u})+\frac{1}{2}\phi(\a,\bar{v})=\phi^M(\a,\b).$$
$\Sigma$ is linearly closed and we have that
$$\inf_{\bar{u}\bar{v}}[\frac{1}{2}\phi^M(\a,\bar{u})+\frac{1}{2}\phi^M(\a,\bar{v})]=
\inf_{\bar{u}}\phi^M(\a,\bar{u})\leqslant\phi^M(\a,\b).$$
Similarly,
$$\phi^M(\a,\b)\leqslant\sup_{\bar{u}\bar{v}}[\frac{1}{2}\phi^M(\a,\bar{u})+\frac{1}{2}\phi^M(\a,\bar{v})].$$
So, $\Sigma$ is satisfiable, say by $(\c,\e)$. Then, we have that
$$tp(\b/\a)=\frac{1}{2}tp(\c/\a)+\frac{1}{2}tp(\e/\a).$$
This shows that $tp(\b/\a)$ is not extreme,
\end{proof}

\section{Omitting non-extreme types}
In the previous section we proved that all extreme types are realized in compact models (if any).
In this section we show that non-extreme types are omitted in suitable models of the theory.
The following remark is a consequence of general facts for compact convex sets (see \cite{Simon}).

\begin{remark}
If $\Gamma_i(\x)$ is a face of $S_{\x}(T)$ for each $i\in I$, then so is $\bigcup_{i\in I}\Gamma_i(\x)$.
If $\Gamma(\x)$ is a face and $\Gamma(\x)\vDash\theta(\x)\leqslant0$,
then $\Gamma(\x)\cup\{0\leqslant\theta(\x)\}$ is either a face or is empty.
If $\Gamma(\x)$ is a face of $S_{\x}(T)$ and $\y$ is disjoint from $\x$, then $\Gamma(\x,\y)=\Gamma(\x)$
is a face of $S_{\x\y}(T)$.
\end{remark}

\begin{theorem} \label{omitting}
Let $T$ be a complete theory in a language $L$ and $p(\z)\in S_{n}(T)$
be non-extreme. Then $p$ is omitted in a (complete) model $M$ of $T$.
\end{theorem}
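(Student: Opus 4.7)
The plan is a Henkin-style construction in linear continuous logic, adapted to force every tuple of Henkin constants away from realizing $p$. I use only the definition of non-extremity: decompose $p=\tfrac{1}{2}p_1+\tfrac{1}{2}p_2$ with distinct $p_1,p_2\in S_n(T)$, and choose $\phi(\z)\in\mathbb{D}_n(T)$ with $p_1(\phi)\neq p_2(\phi)$. After swapping $p_1,p_2$ if needed, $p_1(\phi)<r<p_2(\phi)$ where $r:=p(\phi)$. The task becomes to build a complete $L$-structure $M\vDash T$ in which $tp(\a)\neq p$ for every $\a$; a sufficient local condition is $\phi^M(\a)\neq r$ everywhere, although this will need to be softened when passing to the metric completion.

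Expand $L$ by a set $C$ of new constant symbols large enough to host Henkin witnesses for all $\inf_x$-expressions, and transfinitely construct a chain of linearly satisfiable theories $T_\alpha\supseteq T$ in $L\cup C$. The Henkin tasks are standard: at the appropriate stage, introduce a fresh constant $c'$ and add $\psi(c',\c)\leqslant\inf_x\psi(x,\c)+\tfrac{1}{m}$ for every $m$. The omitting task for an enumerated tuple $\c$ is to add to $T_\alpha$ one of the conditions $\phi(\c)\leqslant r-\epsilon$ or $\phi(\c)\geqslant r+\epsilon$ (for some $\epsilon>0$) while preserving linear satisfiability.

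The core combinatorial claim is that at least one of the two omitting options is compatible with $T_\alpha$. If both failed, $T_\alpha$ would linearly force $\phi(\c)=r$. The finitely many linear conditions accumulated on $\c$ so far cut out a non-empty convex subset $K\subseteq S_n(T)$, and the pinning would require $K\subseteq\{q:q(\phi)=r\}$; since $p_1,p_2\in S_n(T)$ take $\phi$-values on opposite sides of $r$, one uses Propositions~\ref{ext2ext1} and~\ref{extreme3} on the interaction of faces with projection and concatenation to move to a suitable face of $K$ (or of a related larger type space involving both $\c$ and already-constrained tuples) where points on both sides of the hyperplane remain available, ruling out the pinning. Taking $T^{*}=\bigcup_\alpha T_\alpha$ gives a linearly satisfiable (Theorem~\ref{compactness}) Henkin-complete theory; quotienting $C$ by $d=0$ and metrically completing produces $M\vDash T$, in which every tuple of constants $\c$ has $\phi^M(\c)\neq r$, whence $tp(\c)\neq p$.

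The main obstacle is the metric completion. An arbitrary $\a\in M$ is a limit $\c^{(k)}\to\a$ of constant tuples, and Lipschitz continuity gives $\phi^M(\a)=\lim\phi^M(\c^{(k)})$, which could equal $r$ even though each $\phi^M(\c^{(k)})\neq r$. Overcoming this requires a stronger form of the omitting step: for instance ensuring a locally uniform lower bound $|\phi^M(\c)-r|\geqslant\epsilon_0$ for all constants $\c$ within a fixed metric ball, or varying the separating formula across different regions so that the determined partial type of every limit point is pushed off $p$ in some direction. Arguing that such a refinement is compatible with linear satisfiability, while processing one tuple at a time, is the technical crux of the proof.
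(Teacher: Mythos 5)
Your outline has the right general shape (a Henkin construction with an interleaved omitting step), but the core combinatorial claim is exactly where the proof lives, and your version of it has a genuine gap. First, the single-formula strategy is too rigid: from $p=\tfrac12 p_1+\tfrac12 p_2$ you fix one $\phi$ with $p_1(\phi)<r<p_2(\phi)$ and try to force $\phi(\c)\neq r$ for every $\c$. But the conditions already accumulated on $\c$ by earlier sentence-completion and witness stages may perfectly well entail $\phi(\c)=r$ without entailing $tp(\c)=p$; in that case both of your options fail, yet the tuple need not realize $p$, and your construction has no way to proceed. The fact that $p_1,p_2$ lie on opposite sides of the hyperplane in $S_n(T)$ says nothing about the convex set $K$ cut out by the constraints on the particular tuple $\c$, so the appeal to Propositions~\ref{ext2ext1} and~\ref{extreme3} to ``rule out the pinning'' does not go through as stated. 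The paper's mechanism is different and essential: it maintains, as an invariant of the whole construction, that the accumulated partial type $\Gamma_i(\x)$ is a \emph{face} of $S_{\x}(T)$, and at the omitting stage it asks whether \emph{some} condition $\theta(\z)\leqslant 0$ of $p$ can be violated by some $\epsilon>0$. If none can, then $T\cup\Gamma_i(\x,\z)\vDash p(\z)$, so $\{p\}$ is the restriction of a face and hence a face by Proposition~\ref{ext2ext1}, i.e.\ $p$ is extreme --- the contradiction. This invariant also dictates how the other stages must be performed: sentences are decided by taking the extremal admissible value, and Henkin witnesses are \emph{exact} minimizers ($\phi(\e,c)\leqslant\inf_y\phi(y)$), which preserve the face property; your approximate witnesses $\psi(c',\c)\leqslant\inf_x\psi(x,\c)+\tfrac1m$ do not obviously do so, since $\Gamma\cup\{0\leqslant\theta\}$ is only guaranteed to remain a face when $\Gamma\vDash\theta\leqslant0$.

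Second, you correctly identify the metric-completion problem but leave it unresolved, and the ``locally uniform lower bound'' idea is not shown to be compatible with the construction. The paper's solution is structural: a separate class of stages ($i\in W$) processes every Cauchy sequence of constants $(c_{i_n})$ with $d(c_{i_n},c_{i_{n+k}})\leqslant 2^{-n}$ and adds a fresh constant $c$ with $d(c_{i_n},c)\leqslant 2^{-n}$, checking (by a short argument using realizations in a saturated model) that this preserves the face invariant. Consequently the canonical term model is already metrically complete and every element is named by a constant, so the omitting condition $\epsilon\leqslant\theta(c)$ applies to every point of $M$ and no limit argument is needed. To repair your proof you would need both ingredients: replace the single separating functional by the full set of conditions of $p$, and carry the face invariant through every stage, including a limit-constant stage for completeness.
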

\begin{proof}
For simplicity we assume $n=1$.
Let $\kappa\geqslant|L|$ be such that $\kappa^{\aleph_0}=\kappa$ and $N$ be a $\kappa$-saturated model of $T$.
Let $\{X,Y,Z,W\}$ be a partition of $\kappa$ into sets of cardinality $\kappa$.
Let $C=\{c_i|\ i\in Z\}$ be a set of distinct new constant symbols,
$\{\sigma_i|\ i\in X\}$ be an enumeration of $L(C)$-sentences
and $\{\phi_i(y)| \ i\in Y\}$ be an enumeration of $L(C)$-formulas with one free variable $y$.
Also, assume that all $\omega$-sequences of constant symbols of $C$ are enumerated by indices from $W$.
We construct a chain
$$\ \ \ \ T_{0}\subseteq T_1\subseteq\cdots \subseteq T_i\subseteq\cdots\ \ \ \ \ \ \ \ \ \ \ \ \ i<\kappa$$
of satisfiable extensions of $T$ such that for each $i$:

(I) $T_i=T\cup \Gamma_i(\bar{e})$ where $\bar{e}\in C$ (maybe of infinite length) and $|\Gamma_i|<\kappa$

(II) $\Gamma_i(\x)$ is a face in $S_{\x}(T)$ for $0<i$.
\bigskip

Set $T_{0}=T$ and for infinite limit $i$ set $\Gamma_i=\cup_{j<i} \Gamma_j$,\ \ $T_i=T\cup\Gamma_i$.
Note that (I), (II) hold.
Assume $T_i$ is defined and (I), (II) hold. We define $T_{i+1}$ according to the following cases:
\vspace{1mm}

- $i\in X$:\ There is a biggest $r$ such that $T_i,r\leqslant\sigma_i$
is satisfiable. So, $T_i\vDash\sigma_i\leqslant r$.
Let $\e$ be the tuple obtained by unifying the constants of $C$ used in $\Gamma_i$ and $\sigma_i$.
Let $$\Gamma_{i+1}(\e)=\Gamma_i(\e)\cup\{r\leqslant\sigma_i(\e)\},\ \ \ \ \ \ \ T_{i+1}=T\cup\Gamma_{i+1}(\e).$$
Note that $T\cup\Gamma_i(\x)\vDash\sigma_i(\x)\leqslant r$ so that $\Gamma_{i+1}(\x)$  is a face of $S_{\x}(T)$.
\bigskip

- $i\in Y$:\ Unifying the constants of $C$ used in $\Gamma_i$ and $\phi$, we may write $\Gamma_i=\Gamma_i(\e)$ and $\phi_i=\phi_i(\e,y)$.
Take a $c\in C$ not been used in $T_i,\phi_i$.
Let $$\Gamma_{i+1}(\e,c)=\Gamma_i(\e)\cup\{\phi_i(\e,c)\leqslant\inf_y\phi_i(y)\},\ \ \ \ \ \ \ T_{i+1}=T\cup\Gamma_{i+1}(\e,c).$$
Then, the conditions (I), (II) are satisfied for $T_{i+1}$ and $\Gamma_{i+1}$.
\bigskip

- $i\in Z$:\ Assume $T_i=T\cup\Gamma_i(\bar{e},c_i)$ and conditions (I), (II) are satisfied.
We claim that there are $\theta(z)\leqslant0\in p(z)$ and $\epsilon>0$
such that  $T_i\cup\{\epsilon\leqslant\theta(c_i)\}$ is satisfiable.
Suppose not. Then, we must have that $$T\cup\Gamma_i(\x,z)\vDash p(z)$$
By Proposition \ref{ext2ext1}, $p(z)$ is an extreme type. This is a contradiction.
We conclude that there are $\theta(z)\leqslant0\in p(z)$ and a greatest $\epsilon>0$ such that and
$T_i\cup\{\epsilon\leqslant\theta(c_i)\}$ is satisfiable.
Let $$\Gamma_{i+1}(\e,c_i)=\Gamma_i(\e,c_i)\cup\{\epsilon\leqslant\theta(c_i)\},\ \ \ \ \ \ \ T_{i+1}=T\cup\Gamma_{i+1}(\e,c_i).$$
Then, the conditions (I), (II) are satisfied for $T_{i+1}$ and $\Gamma_{i+1}$.
\bigskip

- $i\in W$:\ Suppose that $T_i=T\cup\Gamma_i(\e)$ is constructed and that the
index $i$ corresponds to the sequence $(c_{i_n})$.
Suppose that for all $n,k$ $$d(c_{i_n},c_{i_{n+k}})\leqslant2^{-n}\in T_i$$
(hence every $c_{i_k}$ is a component of $\e$).
Let $c\in C$ be a constant symbol different from the components of $\e$. Set
$$\Gamma_{i+1}(\e,c)=\Gamma_i(\e)\cup\big\{d(c_{i_n},c)\leqslant 2^{-n}:\ \ n=1,2,...\big\}.$$
Note that $\Gamma_{i+1}(\e,c)$ is finitely and hence totally satisfiable.
We show that $\Gamma(\x,y)$ is a face of $S_{\x y}(T)$.
Assume $$\frac{1}{2}q_1(\x,y)+\frac{1}{2}q_2(\x,y)=q(\x,y)\supseteq\Gamma_{i+1}(\x,y).$$
Since $\Gamma_i(\x)$ is a face, $q_1|_{\x}$ and $q_2|_{\x}$ must contain it.
Suppose that $c_{i_n}$ corresponds to the variable $x_n\in\x$.
So, the condition $d(x_{n},x_{n+k})\leqslant2^{-n}$ belongs to both $q_1|_{\x}$ and $q_2|_{\x}$ for each $n$.
Suppose that $q_1(\x,y)$ is realized by a tuple in $N$ where $x_n=a_n$, $y=a$ and $d(a_n,a)=r_n$.
Also, $q_2(\x,y)$ is realized by a tuple where $x_n=b_n$, $y=b$ and $d(b_n,b)=s_n$.
By the above assumption, we must have that $\frac{1}{2}r_n+\frac{1}{2}s_n\leqslant2^{-n}$ for each $n$.
This shows that $a_n\rightarrow a$ and $b_n\rightarrow b$.
Since $d(a_n,a_{n+k})\leqslant2^{-n}$ for each $k$, we must have that $d(a_n,a)\leqslant2^{-n}$.
Hence, $d(x_n,y)\leqslant2^{-n}$ belongs to $q_1(\x,y)$ for each $n$.
This implies that $q_1(\x,y)\supseteq\Gamma_{i+1}(\x,y)$. Similarly, $q_2(\x,y)\supseteq\Gamma_{i+1}(\x,y)$.
\vspace{2mm}

Finally, let $\overline T=\cup_i T_i$.
This is a complete $L(C)$-theory with the following properties:

- for every $\phi(y)$ in $L(C)$, there exists $c\in C$ such that $\phi(c)\leqslant\inf_y\phi(y)\in\overline T$

- for every $c\in C$, there exists $\theta(z)\leqslant0\in p(z)$ and $\epsilon>0$ such that
$\epsilon\leqslant\theta(c)\in\overline T$.

- for every sequence $(c_n)$ of constant symbols from $C$,
if $d(c_n,c_{n+k})\leqslant2^{-n}\in\overline T$ for every $k,n$, then there exists $c\in C$
such that $d(c_n,c)\leqslant2^{-n}\in\overline T$ for every $n$.
\vspace{2mm}

We define the canonical model of $\overline T$ as follows.
For $c,e\in C$ set $c\sim e$ if $d(c,e)=0\in\overline T$.
The equivalence class of $c$ under this relation is denoted by $\hat c$.
Let $M=\{\hat c:\ c\in C\}$. For $\hat c,\hat e\in M$ set $d^M(\hat c,\hat e)=r$
if $d(c,e)=r\in\overline T$. This defines a metric on $M$.
Define an $L(C)$-structure on $M$ by setting for all constant, relation and function symbols
(say unary for simplicity):

- $c^M=\hat c$\ \ if $c\in C$\ \ and \ \ $c^M=\hat e$\ \ if $c\in L$, $e\in C$ and $d(c,e)=0\in \overline T$

- $R^M(\hat c)=r$ \ if \ $R(c)=r\in\overline T$

- $F^M(\hat c)=\hat e$ \ if \ $d(F(c),e)=0\in\overline T$.
\vspace{1mm}

Note that for $c\in L$, since $\inf_x d(c,x)=0$ is satisfied in every model, there exists
$e\in C$ such that $d(c,e)=0\in\overline T$.
Similarly, for $c\in C$, there exists $e\in C$ such that $d(F(c),e)=0\in\overline T$.
It is not hard to verify that $M$ is a well-defined $L(C)$-structure.
It is routine to show by induction on the complexity of formulas that
for every $L$-formula $\phi(x_1,...,x_n)$, $r\in\Rn$ and $c_1,...,c_n\in C$
$$\phi^M(\hat{c}_1,...,\hat{c}_n)=r \ \ \ \ \ \ \
\mbox{iff}\ \ \ \ \ \ \ \phi(c_1,...,c_n)=r\in\overline T.$$
Another way to obtain the canonical model is to consider a model $M'\vDash\overline{T}$
and to show that $M=\{c^{M'}:\ c\in C\}$ is an elementary submodel of $M'$.

It is also clear by the construction of $\overline T$ that $M$
is metrically complete and that it omits $p(z)$.
\end{proof}
\bigskip

A stronger result is obtained by simultaneously omitting all non-extreme types.

\begin{theorem} \label{strong omitting}
There is a model $M\vDash T$ omitting every non-extreme type in every $S_n(T)$.
\end{theorem}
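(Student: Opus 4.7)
The plan is to adapt the construction of Theorem \ref{omitting} so that, instead of omitting one prescribed non-extreme type, the construction simultaneously pins down the type of every tuple of Henkin constants to be extreme. Since the canonical model built in that proof has underlying set $\{\hat{c}:c\in C\}$, it suffices to show that $tp(\hat{\c})$ is extreme for every finite tuple $\c$ of constants in $C$; this automatically omits every non-extreme type in every $S_n(T)$ simultaneously.

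I would keep the partition of $\kappa$ and the Henkin (X), $\inf$-witness (Y) and Cauchy-completeness (W) tasks exactly as in Theorem \ref{omitting}, and replace the single-type omitting case (Z) by a family of \emph{extremality tasks} (E) indexed by pairs $(\c,\phi)$, where $\c$ is a finite tuple of constants from $C$ and $\phi(\z)$ is an $L$-formula with $|\z|=|\c|$. Because $|C|^{<\omega}\cdot|L|\leqslant\kappa$, all such tasks fit in the enumeration. At the stage $i$ assigned to $(\c,\phi)$, with $\Gamma_i(\x)$ already a face of $S_{\x}(T)$, form the sub-face
$$F_i=\{q\in S_n(T):\ q\supseteq\Gamma_i|_{\c}\},$$
which is a face by Proposition \ref{ext2ext1}. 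Let $a_i=\inf_{q\in F_i}q(\phi)$, attained because $F_i$ is a closed (hence compact) subset of $S_n(T)$. Adjoin the condition $\phi(\c)\leqslant a_i$ to form $\Gamma_{i+1}$. Since $\Gamma_i\vDash a_i\leqslant\phi(\c)$, the remark at the start of this section ensures $\Gamma_{i+1}$ is again a face; it is non-empty because any minimizer $q^{*}\in F_i$ lifts to a realization of $\Gamma_i$ witnessing $\phi(\c)=a_i$.

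Let $\overline T=\bigcup_i T_i$ and build the canonical model $M$ exactly as in the proof of Theorem \ref{omitting}. To verify that $M$ omits every non-extreme type, fix $\c\in C^n$ and set $q=tp(\hat{\c})$. For each $L$-formula $\phi(\z)$ the task $(\c,\phi)$ was handled at some stage $i_{\phi}$, and the condition $\phi(\c)=a_{i_\phi}$ lies in $\overline T$; therefore every type in the face
$$F_\infty=\bigcap_i F_i=\{p\in S_n(T):\ p\supseteq\Gamma_\infty|_{\c}\}$$
satisfies $p(\phi)=a_{i_\phi}$ for every $\phi$. Hence $F_\infty$ is a singleton, and $q\in F_\infty$ forces $F_\infty=\{q\}$. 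But $F_\infty$ is a face of $S_n(T)$, so by definition $q$ is extreme.

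The main obstacle is verifying that the infima $a_{i_\phi}$ fixed by the extremality tasks are not invalidated by the subsequent Henkin, witness and Cauchy-completeness steps. This is automatic because (X), (Y) and (W) only replace the current face by a non-empty sub-face, so the chain $(F_i)$ is monotonically decreasing and every equality $\phi(\c)=a_{i_\phi}$ once imposed persists in every later $F_j$; the essential points are the attainment of $a_i$ and the applicability of the face remark, both of which are immediate from the closedness of each $F_i$ inside the compact convex set $S_n(T)$.
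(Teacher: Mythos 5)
Your proof is correct in substance but takes a genuinely different route from the paper's. The paper proves Theorem \ref{strong omitting} by enumerating all non-extreme types $\{p_\alpha\}_{\alpha<\lambda}$, reserving a block $Z_\alpha$ of stages for each, and at each such stage explicitly separating the current tuple of constants $\c$ from $p_\alpha$ by a condition $\epsilon\leqslant\theta(\c)$ with $\theta\leqslant 0\in p_\alpha$; the existence of such a separating condition rests on the dichotomy from the $Z$-case of Theorem \ref{omitting} (otherwise $T\cup\Gamma_i(\x,\z)\vDash p_\alpha(\z)$, so $p_\alpha$ would be a face by Proposition \ref{ext2ext1}, hence extreme). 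You never mention the non-extreme types at all: your extremality tasks pin $\phi(\c)$ to the minimum of $\phi$ over the face above $\Gamma_i|_{\c}$, so that in the limit the face $\{p:\ p\supseteq\Gamma_\infty|_{\c}\}$ collapses to the singleton $\{tp(\hat{\c})\}$, which is extreme by definition of a face. This is formally parallel to the paper's own Henkin task $X$, which maximizes each sentence over the current face, so the technique is in the spirit of the construction; what it buys is that you need only $\kappa\geqslant|L|$ with $\kappa^{\aleph_0}=\kappa$ rather than $\kappa\geqslant|L|+\lambda$ for $\lambda$ the number of non-extreme types, and it makes transparent why the canonical model is extremal (every realized type is exhibited as a singleton face). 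The points you flag as needing care are the right ones, and your treatment of them is adequate at the paper's own level of rigor; the one place to be explicit is that $\Gamma_i|_{\c}$ must be read as in Proposition \ref{ext2ext1} (so that a minimizer $q^{*}\in F_i$, realized in a saturated model, lifts to a completion of $\Gamma_i$ and so witnesses both non-emptiness of $\Gamma_{i+1}$ and the entailment $T\cup\Gamma_i\vDash a_i\leqslant\phi(\c)$); with a purely syntactic restriction that lifting could fail, but the paper relies on the same convention throughout.
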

\begin{proof}
There is nothing to prove if $T$ trivial. Otherwise, $T$ has non-extreme types.
Let $\{p_\alpha(z_1,...,z_{n_\alpha}):\ \alpha\in\lambda\}$ be an enumeration of all
non-extreme types. Let $\kappa\geqslant|L|+\lambda$ be such that $\kappa^{\aleph_0}=\kappa$
and $C$ be a set of new constant symbols of cardinality $\kappa$.
Let $$\{X,Y,W\}\cup\{Z_\alpha:\ \alpha<\kappa\}$$ be a partition of $\kappa$
into $\kappa$ disjoint sets of cardinality $\kappa$.
Let $\{\sigma_i|\ i\in X\}$ be an enumeration of $L(C)$-sentences,
$\{\phi_i(y)| \ i\in Y\}$ be an enumeration of $L(C)$-formulas with one free variable $y$
and for each $\alpha$, $\{\c_i:\ i\in Z_\alpha\}$ be an enumeration of all
$n_\alpha$-tuples of constant symbols from $C$.
Also, assume that all $\omega$-sequences of constant symbols from $C$ are enumerated by indices from $W$.
Then the argument follows as in Theorems \ref{omitting} and the resulting theory
has a metrically complete canonical model omitting every $p_\alpha$.
\end{proof}
\vspace{1mm}

The model given by Theorem \ref{strong omitting} omits non-extreme types with infinite number of variables too.
This is because $p(\x)$ is extreme if and only if $p|_{\z}$ is extreme for every finite $\z\subseteq\x$.

A model $M\vDash T$ is \emph{extremal} if every $\a\in M$ has an extreme type.
So, omitting types theorem states that every complete theory has an extremal model.
If $M$ is extremal, then for every $A\subseteq M$, the structure $(M,a)_{a\in A}$ is extremal.

Let us call a theory $T$ \emph{ample} if it has a model $M$ with an infinite sequence $a_n\in M$
such that $d(a_m,a_n)=1$ for $m\neq n$ (note that we assume $d(x,y)\leqslant1$ for all $x,y$).
If $T$ is ample, it has arbitrarily large models omitting the non-extreme type $p(\z)$.

\begin{proposition} \label{extremal3}
Let $T$ be a complete ample theory in a language $L$.
Let $\lambda$ be an infinite cardinal and $p(\z)\in S_n(T)$ be non-extreme.
Then $p$ is omitted in a model $M$ with $\lambda\leqslant |M|$.
\end{proposition}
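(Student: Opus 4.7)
The plan is to adapt the construction in Theorem~\ref{omitting}, the only change being that the recursion starts with a base face encoding a family of $\lambda$ pairwise distant new constants. Ampleness is precisely what makes this base consistent; the remaining steps carry over verbatim, and the distance-$1$ constants force $|M|\geqslant\lambda$.

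Concretely, I would choose $\kappa>\lambda+|L|$ with $\kappa^{\aleph_0}=\kappa$, and let $C$ be a set of $\kappa$ new constant symbols containing a distinguished subset $\{d_\alpha:\alpha<\lambda\}$. Set
$$\Gamma_0=\{d(d_\alpha,d_\beta)=1:\ \alpha<\beta<\lambda\}\qquad\text{and}\qquad T_0=T\cup\Gamma_0.$$
By ampleness every finite subset of $\Gamma_0$ is realized in some model of $T$, so linear compactness gives joint satisfiability of $T_0$. Moreover $\Gamma_0$ is a face of $S_{\bar d}(T)$: since $d$ is bounded above by $1$, if $\frac{1}{2}p_1+\frac{1}{2}p_2=p$ and $p(d(d_\alpha,d_\beta))=1$, then necessarily $p_1(d(d_\alpha,d_\beta))=p_2(d(d_\alpha,d_\beta))=1$, so each $p_i\supseteq\Gamma_0$. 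The facehood on the full tuple $\bar d$ then follows from the remark at the end of Section~3, which reduces facehood to finite sub-tuples.

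With this new base, I would run the transfinite recursion of Theorem~\ref{omitting} unchanged, partitioning the remaining ordinals below $\kappa$ into sets $X,Y,Z,W$ for the four usual tasks (pinning the value of each $L(C)$-sentence, providing Henkin witnesses for each $\inf$-formula, separating $p$ at each $n$-tuple of constants from $C$, and completing $d$-Cauchy sequences to a point of $C$). The inductive preservation of conditions (I) and (II) at Theorem~\ref{omitting} uses only that the current $\Gamma_i$ is a face and not how it was initialized, so the arguments there apply line-by-line to the enlarged initial data. At the end $\overline T=\bigcup_i T_i$ is a complete $L(C)$-theory whose canonical model $M$ omits $p$ exactly as in Theorem~\ref{omitting}.

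The cardinality bound is then immediate: for $\alpha\neq\beta$, $d(d_\alpha,d_\beta)=1\in\overline T$, so $\hat d_\alpha\neq\hat d_\beta$ in $M$, giving $|M|\geqslant\lambda$. I expect the only non-routine point to be verifying that $\Gamma_0$ really is a face of the (possibly uncountable) type space $S_{\bar d}(T)$; this reduces as above to the single-pair case, which is trivial because the condition $d(d_\alpha,d_\beta)=1$ saturates the a~priori upper bound on $d$. Everything else amounts to bookkeeping transported unchanged from Theorem~\ref{omitting}.
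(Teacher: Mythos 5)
Your proposal is correct and follows the paper's own proof essentially verbatim: the paper likewise sets $\Gamma_0=\{1\leqslant d(c,c'):\ c,c'\in D\ \text{distinct}\}$ for a subset $D\subseteq C$ of cardinality $\lambda$, notes that ampleness plus linear compactness gives consistency and that $\Gamma_0$ satisfies conditions (I) and (II), and then runs the construction of Theorem~\ref{omitting} unchanged. Your observation that facehood of $\Gamma_0$ is trivial because the condition saturates the upper bound $d\leqslant1$ is exactly the intended justification.
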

\begin{proof}
Let $\kappa>\lambda+|L|$ be such that $\kappa^{\aleph_0}=\kappa$.
Take a partition of $\kappa$ as in the proof of Theorem \ref{omitting}.          
Let $D$ be a subset of $C$ of cardinality $\lambda$.                             
Let $$\Gamma_{0}=\{1\leqslant d(c,c'):\ c,c'\in D\ \mbox{are\ distinct}\},
\ \ \ \ \ \ \ \ T_{0}=T\cup\Gamma_{0}.$$
Then, $T_0$ is consistent and $T_0$, $\Gamma_0$ satisfy the conditions (I), (II)
in the proof of Theorem \ref{omitting}.
We then continue the construction of $T_i$ and $\Gamma_i$ as before.
The resulting canonical model has cardinality at least $\lambda$ and omits $p(\z)$.
\end{proof}

Similarly, one can prove that

\begin{proposition} \label{extremal4}
Let $T$ be a complete ample theory in a language $L$ and $\lambda$ be an infinite cardinal.
Then there is a model $M$ which omits all non-extreme types and $\lambda\leqslant |M|$.
\end{proposition}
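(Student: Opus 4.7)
The plan is to combine the two earlier constructions: the simultaneous-omitting machinery of Theorem \ref{strong omitting} and the cardinality-forcing device from Proposition \ref{extremal3}. The ample hypothesis supplies the initial face that guarantees $\lambda$-many pairwise-distant witnesses, while the enumeration of \emph{all} non-extreme types (rather than a single one) is the contribution of Theorem \ref{strong omitting}. The surface changes are bookkeeping; what has to be checked is that the two devices are compatible, namely that the enlarged initial segment $\Gamma_0$ is still a face on which the inductive clauses (I)--(II) of Theorem \ref{omitting} can be kept.

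Fix $\kappa > \lambda + |L|$ with $\kappa^{\aleph_0} = \kappa$, let $C$ be a set of new constant symbols with $|C|=\kappa$, and pick a subset $D \subseteq C$ with $|D|=\lambda$. Enumerate the non-extreme types of $T$ as $\{p_\alpha(z_1,\ldots,z_{n_\alpha}):\ \alpha<\mu\}$ with $\mu\leqslant\kappa$, and partition $\kappa$ into $\kappa$-many disjoint pieces of size $\kappa$: $X$ indexing $L(C)$-sentences, $Y$ indexing $L(C)$-formulas in one free variable, $W$ indexing $\omega$-sequences of constants from $C$, and, for each $\alpha<\mu$, a piece $Z_\alpha$ enumerating all $n_\alpha$-tuples of constants from $C$.

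Set $\Gamma_{0}=\{1\leqslant d(c,c'):\ c,c'\in D,\ c\neq c'\}$ and $T_{0}=T\cup\Gamma_{0}$. The ample hypothesis makes $T_0$ satisfiable. Moreover $\Gamma_0$ is a face of $S_{\x}(T)$ in the variables indexed by $D$: starting from the trivial face $T$, and noting that $T\vDash d(x_c,x_{c'})-1\leqslant 0$ for each pair, each successive addition $0\leqslant d(x_c,x_{c'})-1$ yields either a face or the empty set by the Remark preceding Theorem \ref{omitting}, and non-emptiness is preserved by ampleness. From here, the transfinite recursion proceeds verbatim as in Theorems \ref{omitting} and \ref{strong omitting}, handling stages $i\in X$, $i\in Y$, $i\in W$ exactly as before, and handling $i\in Z_\alpha$ (corresponding to a tuple $\c_i\in C^{n_\alpha}$) by invoking Proposition \ref{ext2ext1}: non-extremity of $p_\alpha(\z)$ guarantees some $\theta(\z)\leqslant 0\in p_\alpha$ and a greatest $\epsilon>0$ with $T_i\cup\{\epsilon\leqslant\theta(\c_i)\}$ satisfiable, and adjoining this condition preserves the face invariant.

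Let $\overline T=\bigcup_{i<\kappa}T_i$, and let $M$ be its canonical model, built as in the proof of Theorem \ref{omitting}. Completeness and Henkin stages ($X$, $Y$) plus the metric-completion stage ($W$) guarantee $M\vDash T$ and that $M=\{\hat c:\ c\in C\}$; the stages $Z_\alpha$ ensure that no tuple from $M$ realizes any $p_\alpha$. Since $1\leqslant d(c,c')\in\overline T$ for distinct $c,c'\in D$, the map $c\mapsto\hat c$ is injective on $D$, giving $|M|\geqslant\lambda$. That non-extreme types of infinite arity are also omitted follows, as remarked after Theorem \ref{strong omitting}, from the fact that a type is extreme iff every finite restriction is. The main obstacle I expect is exactly what was flagged above: verifying that the $\lambda$-many initial constraints in $\Gamma_0$ together still satisfy clauses (I)--(II) and do not interfere with the later face-preservation arguments; once that is in place, the remainder is a routine transcription of the two earlier proofs.
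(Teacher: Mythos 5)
Your proposal is correct and is essentially the proof the paper intends: the paper gives no argument for this proposition beyond the phrase ``Similarly, one can prove that,'' meaning exactly the combination you carry out — the initial face $\Gamma_0$ of pairwise-distant constants from Proposition \ref{extremal3} grafted onto the simultaneous enumeration of all non-extreme types from Theorem \ref{strong omitting}. Your verification that $\Gamma_0$ is a (non-empty) face via the Remark preceding Theorem \ref{omitting} and linear compactness is exactly the missing bookkeeping, so nothing further is needed.
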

\vspace{1mm}

\begin{proposition} \label{extremal5}
Let $p(\z)$ be extreme. Then there is a model $K\vDash T$ which realizes $p$ and omits every non-extreme type of $T$.
\end{proposition}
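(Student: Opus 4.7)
The plan is to reduce Proposition \ref{extremal5} to Theorem \ref{strong omitting} by pre-loading the realization of $p$ into the initial stage of the omitting-types construction. Introduce a fresh tuple $\bar{d}$ of constant symbols matching the length of $\z$, form the expanded language $L(\bar{d})$, and begin the construction at
\[
T_0 = T \cup p(\bar{d}), \qquad \Gamma_0(\bar{d}) = p(\bar{d}).
\]
Because $p$ is extreme, the set $\{q \in S_{|\z|}(T) : p \subseteq q\} = \{p\}$ is a singleton face of $S_{|\z|}(T)$, so $\Gamma_0(\z)$ is a face. Thus conditions (I) and (II) from the proof of Theorem \ref{omitting} already hold at stage $0$, once $\kappa$ is chosen with $\kappa \geqslant |L|$, $\kappa^{\aleph_0} = \kappa$, and $|p| < \kappa$.

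Next I would run the construction of Theorem \ref{strong omitting} verbatim from this new starting point: partition the index set into pieces $X, Y, W$ and $\{Z_\alpha : \alpha < \lambda\}$ handling, respectively, decidability of $L(C)$-sentences, Henkin witnesses for $\inf_y \phi_i$, simultaneous omission of every non-extreme type $p_\alpha$ at every $n_\alpha$-tuple from $C$, and Cauchy completeness. Each successor step preserves the face property exactly as in Theorems \ref{omitting} and \ref{strong omitting}, producing a chain $\Gamma_0 \subseteq \Gamma_1 \subseteq \cdots$ of satisfiable face extensions of $T_0$ and hence a complete canonical model $K$ of $\overline T = \bigcup_i T_i$.

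By construction, $\bar{d}^K$ realizes $p$, and every tuple of elements of $K$ is named by constants from $C$ and is therefore dealt with at some stage $i \in Z_\alpha$ for each non-extreme type $p_\alpha$, forcing $K$ to omit every non-extreme type. The only point requiring any real thought is to check that the initial enrichment by $p(\bar{d})$ does not obstruct the omission step --- namely, that at each $i \in Z_\alpha$ one can still find $\theta \leqslant 0$ in $p_\alpha$ and $\epsilon > 0$ with $T_i \cup \{\epsilon \leqslant \theta(\bar{c}_i)\}$ satisfiable. This goes through exactly as in Theorem \ref{omitting} via Proposition \ref{ext2ext1}: if no such $\theta, \epsilon$ existed, then $T \cup \Gamma_i(\x, \bar{z}) \vDash p_\alpha(\bar{z})$ (where $\bar{z}$ corresponds to $\bar{c}_i$, possibly including components of $\bar{d}$), so the restriction to $\bar{z}$ of the face $\Gamma_i(\x, \bar{z})$ would be a face contained in $\{p_\alpha\}$, making $p_\alpha$ extreme --- a contradiction. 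I expect this verification, that absorbing $p$ into $\Gamma_0$ leaves the non-trivial satisfiability argument of the omission stage intact, to be the mild main obstacle; everything else is a direct reproduction of the earlier proofs.
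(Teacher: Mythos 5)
Your proof is correct, but it takes a genuinely different route from the paper's. The paper treats Theorem \ref{strong omitting} as a black box: it picks $\a\vDash p$ in some $M\vDash T$, passes to the complete theory $\bar T=Th(M,\a)$ in the language $L(\a)$, takes an extremal model $N$ of $\bar T$, and lets $K$ be the $L$-reduct. The only work is then the transfer of extremality across the parameter $\a$: each $\b\in K$ has $tp(\b/\a)$ extreme, Proposition \ref{extreme3} ($\Leftarrow$) upgrades this to $\a\b$ extreme because $\a$ itself is extreme, and Proposition \ref{ext2ext1} gives that $\b$ is extreme. You instead reopen the Henkin construction and seed $p(\bar d)$ into $\Gamma_0$; the key observations --- that $\{q: p\subseteq q\}=\{p\}$ is a face precisely because $p$ is extreme, so condition (II) holds at stage $0$, and that the satisfiability of the omission step at $i\in Z_\alpha$ is untouched since a failure would still force $p_\alpha$ to be extreme via Proposition \ref{ext2ext1} --- are exactly right, and you correctly flag the cardinality adjustment $|p|<\kappa$. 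What the paper's route buys is brevity and a clean illustration of the relative-extremality calculus of Section 3; what yours buys is independence from Proposition \ref{extreme3} and a template that visibly generalizes (one can seed any face, not just a singleton one, into $\Gamma_0$). The only loose ends in your write-up are bookkeeping: the constants $\bar d$ must be absorbed into $C$ (or the enumerations of $L(C)$-sentences and formulas must range over $L(\bar d)(C)$), and the tuples $\c_i$ handled in the blocks $Z_\alpha$ must be allowed to include components of $\bar d$, which you note parenthetically. Neither affects correctness.
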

\begin{proof}
Let $\a\in M\vDash T$ realize $p(\z)$ and $\bar T=Th(M,\a)$.
Let $N$ be an extremal model of $\bar T$.
and $K$ be its reduction to the language of $T$.
Then, $K$ realizes $p(\z)$. On the other hand, for each $\b\in K$,\ \ $tp(\b/\a)$ is extreme.
So, since $\a$ is extreme, $\a\b$ is extreme. This implies that $\b$ extreme.
So, $K$ omits every non-extreme type of $T$.
\end{proof}

The following example shows that there are extreme types which can be omitted.

\begin{example}
\emph{Let $L=\{c_0,c_1,...\}$ and $T$ be the theory of $M=\{a_0,a_1,...\}$ where
$c_i^M=a_i$ and $d(a_i,a_j)=1$ for $i<j$.
Let $\Sigma(x)=\{d(x,c_i)=1:\ i\in\omega\}$. Then, $\Sigma$ is a face.
In particular, assume $\frac{1}{2}p_1+\frac{1}{2}p_2\supseteq\Sigma$ where $p_1$ is realized by $b_1\in N$
and $p_2$ is realized by $b_2\in N$ and $M\preccurlyeq N$.
Then, $\frac{1}{2}d(b_1,c_i)+\frac{1}{2}d(b_2,c_i)=1$ for each $i$. This implies that $d(b_1,c_i)=d(b_2,c_i)=1$.
Let $p(x)\supseteq\Sigma$ be extreme. Then, $p$ is not realized in $M$.}
\end{example}

\section{Extremal models}
In this section we give some applications of the omitting types theorem,
mostly in the cases where the theory has a compact or first order model.
By CL-theory, CL-equivalence etc. we mean the corresponding notion in the sense of full continuous logic.

A partial isomorphism between $M$ and $N$ is a relation $\a\sim\b$ where $\a\in M$, $\b\in N$
have the same length such that (i) if $\a\sim\b$ then $\theta^M(\a)=\theta^N(\b)$ for every atomic $\theta$
(ii) if $\a\sim\b$, then for every $c\in M$ there exists $e\in N$ such that $\a c\sim\b e$
(ii) if $\a\sim\b$, then for every $e\in N$ there exists $c\in M$ such that $\a c\sim\b e$.

It is easily verified that if $M$ and $N$ are partially isomorphic then $M\equiv_{\mathrm{CL}} N$.

\begin{proposition} \label{extremal embedding}
Assume $L$ is countable. Then, $E_n(T)$ is closed for all $n$ if and only if
the class of extremal models of $T$ is axiomatized by a CL-theory $\mathbb T$.
In this case, $\mathbb T$ is CL-complete.
\end{proposition}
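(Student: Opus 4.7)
The plan is to treat the two directions separately: the forward direction both constructs an axiomatizing CL-theory and establishes its CL-completeness, while the converse uses an ultraproduct to detect any failure of closedness. For the forward direction, assume each $E_n(T)$ is closed. For every non-extreme $p \in S_n(T)$, Urysohn's lemma on the compact Hausdorff space $S_n(T)$ produces a continuous $g_p \colon S_n(T) \to [0,1]$ vanishing on $E_n(T)$ and positive at $p$. The Kakutani--Stone (lattice) form of Stone--Weierstrass, applied to the point-separating sublattice of $C(S_n(T))$ generated by the evaluations $q \mapsto q(\phi)$ under $\wedge, \vee, +$ and scalar multiplication, lets me approximate $g_p$ uniformly by a quantifier-free CL-combination $\Phi_p$ of linear formulas. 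Setting $\mathbb{T} := T \cup \{\sup_{\x} \Phi_p(\x) \leq 0 : p \notin E_n(T),\ n \geq 1\}$, one checks that $N \models \mathbb{T}$ iff every tuple in $N$ has linear type outside each open set $\{q : \Phi_p(q) > 0\}$, hence in $E_n(T)$; so $\mathbb{T}$ axiomatizes the extremal models of $T$.

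For CL-completeness of $\mathbb{T}$, given extremal $M_1, M_2$ I pass to CL-$\aleph_1$-saturated CL-elementary extensions $M_1', M_2'$ and first verify these are still extremal, and even extremally $\aleph_1$-saturated: the restriction $\xi \colon \mathbb{S}_n(\mathbb{T}_{M_i}) \to S_n(T)$ is continuous, the CL-types realized in $M_i$ are dense in its domain with image in $E_n(T)$, so the compact image $\xi(\mathbb{S}_n(\mathbb{T}_{M_i}))$ lies in $\overline{E_n(T)} = E_n(T)$; the analogous argument over a countable parameter set $A \subseteq M_i'$ uses that $E_n(A)$ is a closed slice of $E_{n+|A|}(T)$, via Proposition~\ref{extreme3}. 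I then run a finite back-and-forth using $\a \sim \b$ iff $tp(\a) = tp(\b)$; atomic agreement is automatic because atomic formulas are linear. For the extension step, given $\a \sim \b$ and $c \in M_1'$, I define $q(\phi(\b, y)) := \phi^{M_1'}(\a, c)$ on linear formulas over $\b$; well-definedness and positivity amount to transferring $\sup_y$- and $\inf_y$-linear statements from $\b$ to $\a$ via equality of linear types, and Proposition~\ref{extreme3} combined with the affine bijection $S_1(\a) \to S_1(\b)$ induced by $tp(\a) = tp(\b)$ gives $q \in E_1(\b)$. Extremal $\aleph_1$-saturation of $M_2'$ realizes $q$ by some $e$, yielding $\a c \sim \b e$; the back step is symmetric. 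The partial-isomorphism criterion stated just before the proposition then gives $M_1' \equiv_{\mathrm{CL}} M_2'$, hence $M_1 \equiv_{\mathrm{CL}} M_2$.

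For the converse, suppose extremal models form a CL-elementary class axiomatized by some $\mathbb{T}$ but $E_n(T)$ fails to be closed for some $n$; pick a net $p_\alpha \in E_n(T)$ converging in the logic topology to a non-extreme $p$. Proposition~\ref{extremal5} supplies an extremal $M_\alpha$ realizing $p_\alpha$ via some $\a_\alpha$. Forming the ultraproduct $N = \prod_{\mathcal F} M_\alpha$ along an ultrafilter $\mathcal F$ respecting this convergence gives $N \models \mathbb{T}$ by preservation of CL-sentences under ultraproducts, so $N$ is extremal; yet $\phi^N([\a_\alpha]) = \lim_{\mathcal F} p_\alpha(\phi) = p(\phi)$ for every linear $\phi$, so $[\a_\alpha]$ realizes the non-extreme $p$---contradiction. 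The main obstacle I anticipate is the back-and-forth construction: well-definedness, positivity, and extremeness of the transferred type $q$ all rest on the (delicate) intermediate step that CL-$\aleph_1$-saturated extensions of extremal models remain extremal, which is precisely where the closedness of $E_n(T)$ is used essentially.
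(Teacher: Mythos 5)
Your proof is essentially correct, but both directions of the equivalence are argued differently from the paper. For the forward direction the paper does not construct axioms at all: it shows the class of extremal models is closed under ultraproducts (using closedness of $E_n(T)$ to see that $\lim_{\mathcal F,i}tp(a_i)$ is still extreme) and under CL-elementary equivalence (via elementary embedding into an ultrapower), and then invokes the abstract characterization of CL-axiomatizable classes; your Urysohn/Kakutani--Stone construction produces explicit axioms instead, which is more self-contained but imports Stone--Weierstrass machinery. Note a slip there: with $\Phi_p$ only a \emph{uniform approximation} of a function $g_p$ vanishing on $E_n(T)$, the condition $\sup_{\x}\Phi_p(\x)\leqslant 0$ can fail in an extremal model (where $\Phi_p$ may take small positive values); you need $\sup_{\x}\Phi_p(\x)\leqslant\epsilon_p$ for an $\epsilon_p$ strictly between $\sup_{E_n(T)}\Phi_p$ and $\Phi_p(p)$ --- an easy repair. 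For the converse, the paper argues directly that $E_n(T)$ is the continuous image of the compact space $\mathbb{S}_n(\mathbb T)$ under the restriction map (surjectivity coming from an extremally $\aleph_1$-saturated ultrapower of a model of $\mathbb T$), hence closed; your contrapositive via a net $p_\alpha\rightarrow p$, extremal models $M_\alpha$ from Proposition \ref{extremal5}, and an ultraproduct along an ultrafilter containing the tails is equally valid and arguably more vivid, though for countable $L$ one can just use sequences since $S_n(T)$ is metrizable. The CL-completeness argument is in substance the paper's: a back-and-forth between extremal, extremally $\aleph_1$-saturated extensions using $\a\sim\b$ iff $tp(\a)=tp(\b)$ and realizability of the shifted extreme type $tp(c/\a)$; the paper gets the saturated extensions as ultrapowers along a countably incomplete ultrafilter via Proposition \ref{sat1}, whereas you pass to CL-$\aleph_1$-saturated extensions and rederive extremal saturation from closedness of $E_n$ over countable parameter sets --- both work, the ultrapower route being slightly shorter since Proposition \ref{sat1} is already available.
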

\begin{proof}
Assume every $E_n(T)$ is closed. We show that the class of extremal models is closed under ultraproduct and CL-equivalence.
Let $\mathcal F$ be an ultrafilter on a set $I$ and $M_i\vDash T$ be extremal for each $i$.
Let $a=[a_i]\in M=\prod_{\mathcal F}M_i$ and $p_i(x)=tp(a_i)\in E_1(T)$.
Then $$p=\lim_{\mathcal{F},i}p_i\in E_1(T)$$ and for each $\phi(x)$
$$p(\phi)=\lim_{\mathcal{F},i} p_i(\phi)=\lim_{\mathcal{F},i}\phi^{M_i}(a_i)=\phi^{M}(a).$$
So, every $a\in M$ (and similarly every tuple $\a\in M$) has an extreme type.
On the other hand, assume $M\vDash T$ is extremal and $N\equiv_{\mathrm{CL}}M$.
Then, there is an ultrafilter $\mathcal F$ such that $N$ is elementarily embedded in $M^{\mathcal F}$. So, $N$ is extremal.
We conclude that there exists a CL-theory axiomatizing the extremal models of $T$.
Conversely assume the extremal models of $T$ are axiomatized by a CL-theory $\mathbb T$.
Let $\xi: \mathbb{S}_n(\mathbb T)\rightarrow S_n(T)$ be the restriction map.
Then, $\xi$ is continuous and its range is a subset of $E_n(T)$.
Let $M\vDash\mathbb{T}$ and $\mathcal F$ be countably incomplete. Then, $M^{\mathcal F}\vDash T$
is extremal as well as extremally $\aleph_1$-saturated.
So, the range of $\xi$ is exactly the set of extreme types. We conclude that every $E_n(T)$ is closed.

For the second part of the proposition, assume $M,N\vDash\mathbb{T}$. We show that $M\equiv_{\mathrm{CL}}N$.
By the CL variant of downward L\"{o}wenheim-Skolem theorem,
we may assume $M$ and $N$ are both separable. Let $\mathcal F$ be a countably incomplete ultrafilter on $\Nn$.
Then, $M^{\mathcal F}$ and $N^{\mathcal F}$ are extremal and extremally $\aleph_1$-saturated models of $T$.
We show that they are partially isomorphic.
For $\a\in M^{\mathcal F}$ and $\b\in N^{\mathcal F}$ of the same length set $\a\sim\b$
if $tp(\a)=tp(\b)$.
Suppose $\a\sim\b$ and $c\in M^{\mathcal F}$ is given. Then, $tp(c/\a)$ is extreme.
So, its shift $$\{\phi(\b,x)=r:\ \ \phi^{M^{\mathcal F}}(\a,c)=r\}$$ is extreme.
Assume it is realized by $e\in N^{\mathcal F}$. Then, $\a c\sim\b e$. The back property is verified similarly.
We conclude that $M\equiv_{\mathrm{CL}}N$.
\end{proof}
\vspace{1mm}

Countability of the language in the above proposition (as well as the following ones)
is because we use Proposition \ref{sat1}. Assuming a result similar to Th. 6.1.8 of
\cite{Chang-Keisler1} is proved for extremal types, this condition can be removed.
If every $E_n(T)$ is closed, the CL-theory of extremal models of $T$ is denoted by $T_e$.
Our next goal is to show that $E_n(T)$ coincides with $\mathbb{S}_n(T_e)$.

For $p,q\in E_n(T)$ let $r=d(p,q)$ and $$\Sigma(\x,\y)=p(\x)\cup q(\y)\cup\{d(\x,\y)\leqslant r\}.$$
$\Sigma$ is satisfiable. We show that it is a face. Suppose that $\frac{1}{2}p_1(\x,\y)+\frac{1}{2}p_2(\x,\y)\supseteq\Sigma$.
Restricting $p_1$ and $p_2$ to $\x$ (and then to $\y$), we conclude that $p_1$ and $p_2$ contain $p(\x)\cup q(\y)$.
Suppose that $(\a_1,\a_2)$ realizes $p_1$ and $(\b_1,\b_2)$ realizes $p_2$. Then, we must have that
$$\frac{1}{2}d(\a_1,\a_2)+\frac{1}{2}d(\b_1,\b_2)\leqslant r.$$
This implies that $d(\a_1,\a_2)=d(\b_1,\b_2)=r$ and hence $p_1$, $p_2$ contain $\Sigma$.
Since every face contains an extreme point, we conclude that if $K$ realizes all extreme types, then
$$d(p,q)=\inf\{d(\a,\b):\ \a\vDash p,\ \b\vDash q,\ \a,\b\in K\}.$$
In particular, if $T$ has a first order model $M$, then all extreme types are realized in
$K=M^{\mathcal F}$ for some suitable $\mathcal F$.
So, in this case, $d(p,q)=1$ for every distinct extreme types $p,q$.

\begin{proposition} \label{complete}
Assume $L$ is countable and every $E_n(T)$ is closed.
Then, the restriction map $\xi:\mathbb{S}_n(T_e)\rightarrow E_n(T)$ is an
isometry of the metric topologies and homeomorphism of the logic topologies.
\end{proposition}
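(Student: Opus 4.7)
The plan is to first establish injectivity of $\xi$ (surjectivity onto $E_n(T)$ is already contained in the proof of Proposition \ref{extremal embedding}), then deduce the logic-topology homeomorphism by a compact-Hausdorff argument, and finally deduce the metric isometry by computing both distances inside a single ultrapower.

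I would fix a countable $M\vDash T_e$ and a countably incomplete ultrafilter $\mathcal F$ on $\Nn$. Then $M^{\mathcal F}$ is simultaneously CL-$\aleph_1$-saturated (standard for ultrapowers) and extremally $\aleph_1$-saturated by Proposition \ref{sat1}, and it satisfies $T_e$, so every tuple in $M^{\mathcal F}$ has an extreme linear type. The core intermediate claim to prove is that inside $M^{\mathcal F}$ the linear type of a tuple determines its CL-type. To prove it I would reuse the partial-isomorphism argument of Proposition \ref{extremal embedding}, applied to $M^{\mathcal F}$ with itself: declare $\a\sim\b$ when $\a,\b\in M^{\mathcal F}$ share the same linear type; the forth step handles a given $c$ by noting that $tp(c/\a)$ is extreme (since $M^{\mathcal F}$ is extremal and by Proposition \ref{extreme3}), so its shift $\{\phi(\b,x)=r:\phi^{M^{\mathcal F}}(\a,c)=r\}$ is an extreme linear type over $\b$ and is realized by some $e\in M^{\mathcal F}$ by extremal $\aleph_1$-saturation; the back step is symmetric. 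An induction on CL-formula complexity then yields $\theta^{M^{\mathcal F}}(\a)=\theta^{M^{\mathcal F}}(\b)$ for every CL-formula $\theta$, hence equality of CL-types.

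Injectivity of $\xi$ follows at once: given $P_1,P_2\in\mathbb{S}_n(T_e)$ with $\xi(P_1)=\xi(P_2)$, realize them in $M^{\mathcal F}$ by $\a_1,\a_2$; these have equal linear types, hence equal CL-types by the intermediate claim, forcing $P_1=P_2$. Since $\xi$ is visibly continuous (it is just the restriction of linear functionals) and maps the compact Hausdorff space $\mathbb{S}_n(T_e)$ onto the Hausdorff space $E_n(T)$, it is a homeomorphism of logic topologies.

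For the metric statement I would compute both distances inside $M^{\mathcal F}$. CL-$\aleph_1$-saturation gives $d(P_1,P_2)=\inf\{d(\a_1,\a_2):\a_i\vDash P_i\text{ in }M^{\mathcal F}\}$. On the other hand, the face argument stated just before the proposition, together with the fact that $M^{\mathcal F}$ realizes all extreme types, gives $d(\xi(P_1),\xi(P_2))=\inf\{d(\a_1,\a_2):\a_i\vDash\xi(P_i)\text{ in }M^{\mathcal F}\}$. The intermediate claim guarantees $\{\a\in M^{\mathcal F}:\a\vDash P\}=\{\a\in M^{\mathcal F}:\a\vDash\xi(P)\}$ for every $P\in\mathbb{S}_n(T_e)$, so the two infima coincide and $\xi$ is an isometry. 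The main obstacle is carrying out the back-and-forth induction rigorously — in particular, checking that the shift of an extreme linear type along tuples with the same linear type is again extreme, so that extremal saturation realizes it — but this is essentially already verified in Proposition \ref{extremal embedding}, and once reused the rest is formal.
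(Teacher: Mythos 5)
Your proposal is correct and follows essentially the same route as the paper: surjectivity via realization of extreme types in extremal models, injectivity by the partial-isomorphism (back-and-forth) argument of Proposition \ref{extremal embedding} carried out in $M^{\mathcal F}$, the compact-to-Hausdorff argument for the logic topologies, and the face construction $p(\x)\cup q(\y)\cup\{d(\x,\y)\leqslant r\}$ from the discussion preceding the proposition for the isometry. You merely make explicit the step the paper leaves implicit, namely that the realization sets of $P$ and of $\xi(P)$ in $M^{\mathcal F}$ coincide, so the two infima defining the distances agree.
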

\begin{proof}
Since every $p\in E_n(T)$ is realized in an extremal $M\vDash T$ which is also a model of $T_e$, $\xi$ is surjective.
Let $\a,\b\in M$ where $M$ is extremal and $tp(\a)=tp(\b)$.
Let $\mathcal F$ be an countably incomplete ultrafilter on $\Nn$.
So, $M\preccurlyeq_{\mathrm{CL}} M^{\mathcal F}\vDash T_e$.
Then, as in the proof of Proposition \ref{extremal embedding}, $(M^{\mathcal F},\a)$ and $(M^{\mathcal F},\b)$
are partially isomorphic. So, $\a$ and $\b$ have the same CL-type.
This proves that $\xi$ is injective.
Now, the above explanation shows that $\xi$ is an isometry. Also, since $\xi$ is logic-continuous
and $\mathbb{S}_n(T_e)$ is compact, $\xi$ is a homeomorphism of the logic topologies.
\end{proof}

We recall that by \cite{BBHU} Theorem 12.10 a complete CL-theory $\mathbb T$ is $\aleph_0$-categorical
if and only if every $\mathbb{S}_n(\mathbb{T})$ is compact in the metric topology. The theorem includes
the case where $\mathbb T$ has a (unique) compact model if we call such a theory $\aleph_0$-categorical.

\begin{proposition}
Let $L$ be countable. Then, every $E_n(T)$ is compact in the metric topology if and only if
every $E_n(T)$ is closed in the logic topology and $T_e$ is $\aleph_0$-categorical.
\end{proposition}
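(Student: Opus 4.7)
The plan is to reduce both directions to Proposition \ref{complete} combined with the criterion of \cite{BBHU} Theorem 12.10 recalled just above. The essential bridge is the restriction map $\xi:\mathbb{S}_n(T_e)\to E_n(T)$, which, whenever every $E_n(T)$ is logic-closed, is simultaneously a metric isometry and a homeomorphism of the logic topologies.

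For the implication assuming logic-closedness of each $E_n(T)$ and $\aleph_0$-categoricity of $T_e$, I would simply transport compactness across $\xi$. By \cite{BBHU} Theorem 12.10, $\aleph_0$-categoricity of $T_e$ is equivalent to metric-compactness of every $\mathbb{S}_n(T_e)$, and since $\xi$ is an isometry, $E_n(T)$ inherits metric-compactness. This direction is essentially a free consequence of Proposition \ref{complete}.

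For the converse direction, metric-compactness of $E_n(T)$ is the hypothesis but logic-closedness must be obtained before Proposition \ref{complete} may be invoked. Here I would use the observation recorded in Section 2 that on $S_n(T)$ the metric topology refines the logic topology. Consequently the identity map $(E_n(T),d)\to (E_n(T),\mathrm{logic})$ is continuous, so the metric-compact $E_n(T)$ is logic-compact, and since $S_n(T)$ is logic-Hausdorff this forces $E_n(T)$ to be logic-closed. Now Proposition \ref{complete} provides the isometry $\xi$, which transfers metric-compactness of $E_n(T)$ back to $\mathbb{S}_n(T_e)$, and a second appeal to \cite{BBHU} Theorem 12.10 yields the $\aleph_0$-categoricity of $T_e$.

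The only subtle step is this finer-to-coarser bootstrap that unlocks Proposition \ref{complete}, and I do not anticipate any genuine obstacle: the substantive content, namely the existence of $\xi$ as a metric isometry (via the back-and-forth between extremally $\aleph_1$-saturated ultrapowers invoked in Propositions \ref{extremal embedding} and \ref{complete}), has already been established.
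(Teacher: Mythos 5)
Your proposal is correct and follows essentially the same route as the paper: both directions reduce to Proposition \ref{complete} together with the \cite{BBHU} Theorem 12.10 criterion, and your ``finer-to-coarser'' step (metric-compact $\Rightarrow$ logic-compact $\Rightarrow$ logic-closed in the Hausdorff space $S_n(T)$) is exactly how the paper unlocks Proposition \ref{complete} in the forward direction.
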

\begin{proof}
Assume $E_n(T)$ is compact in the metric topology. Then, it is compact in the logic topology.
So, it is a closed subset of $S_n(T)$.
By Proposition \ref{complete}, $\mathbb{S}_n(T_e)$ is isometric to $E_n(T)$.
Therefore, $T_e$ is $\aleph_0$-categorical.
Conversely, if every $E_n(T)$ is closed and $T_e$ is $\aleph_0$-categorical,
then $E_n(T)$ is isometric to $\mathbb{S}_n(T_e)$ which is compact in the metric topology.
\end{proof}

\begin{proposition}
Let $L$ be countable and $T$ be a linear theory which is CL-complete, i.e. $M\equiv_{\mathrm{CL}}N$
for any $M,N\vDash T$. If every $E_n(T)$ is closed, then $T$ is trivial.
\end{proposition}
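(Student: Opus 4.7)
The plan is to reduce the statement to a purely convex-geometric observation, in three steps: (i) show that every model of $T$ must be extremal, (ii) deduce that $S_n(T)=E_n(T)$ for every $n$, and (iii) conclude that each $S_n(T)$ is a singleton, forcing any two elements of any model to coincide.

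For step (i), I would invoke Proposition \ref{extremal embedding} (which applies because $L$ is countable and each $E_n(T)$ is closed) to obtain the CL-theory $T_e$ whose models are exactly the extremal models of $T$. Theorem \ref{strong omitting} supplies an extremal $M_0\vDash T$, and hence $M_0\vDash T_e$. For an arbitrary $N\vDash T$, CL-completeness of $T$ gives $N\equiv_{\mathrm{CL}}M_0$, and since $T_e$ is a CL-theory this upgrades to $N\vDash T_e$. Hence every model of $T$ is extremal.

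For step (ii), in an extremal model every realized type is, by definition, extreme. Moreover every $p\in S_n(T)$ is realized in some model of $T$: by the original definition of an $n$-type as a maximal set of conditions satisfiable with $T$ (whose equivalence with the state-space description rests on Theorem \ref{compactness}), $T\cup p(\x)$ has a model. Combining these two facts, $S_n(T)=E_n(T)$ for every $n$.

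Step (iii) is the easy convex observation: if every point of a convex set is extreme, then for any two points $p,q$ the midpoint $\frac{1}{2}p+\frac{1}{2}q$ is itself extreme, which is incompatible with having $p\neq q$; so the set has at most one point. Applying this with $n=2$, for any $a,b$ in any model $M\vDash T$ we have $tp(a,b)=tp(a,a)$, whence $d^M(a,b)=d^M(a,a)=0$ and $a=b$. Thus $|M|=1$ for every model $M\vDash T$, which is precisely triviality. The only delicate step is (i), which uses the full strength of Proposition \ref{extremal embedding} (and, through it, Proposition \ref{sat1}, which is where countability of $L$ enters); the remaining two steps are immediate.
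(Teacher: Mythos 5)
Your proof is correct and follows essentially the same route as the paper's: the paper likewise uses the closedness hypothesis (via Proposition \ref{extremal embedding}) to obtain $T_e$, invokes CL-completeness to conclude every model of $T$ is extremal, and then notes that a theory with no non-extreme types must have only the singleton model. Your steps (ii) and (iii) merely spell out the paper's final sentence (using the midpoint argument and the $n=2$ type $tp(a,b)$, exactly as the paper does in its later $\aleph_0$-categoricity corollary), so there is nothing to add.
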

\begin{proof}
$T_e$ is the CL-theory of any extremal model $K$ of $T$. By the assumption,
for every $M\vDash T$ one has that $M\equiv_{\mathrm{CL}}K$. So, every model of $T$ is extremal.
This shows that $T$ has no non-extreme type. This happens only if the singleton is the only model of $T$.
\end{proof}

\begin{example}
\emph{Let APrA be the theory probability algebras with an aperiodic automorphism \cite{BBHU}.
This is a CL-complete theory.
It is shown in \cite{Bagheri-Lip} that APrA can be axiomatized by linear axioms.
So, regarding it as a non-trivial linear theory which is CL-complete, we conclude that
$E_n(\mathrm{APrA})$ is not closed for some $n$.}
\end{example}
\vspace{1mm}

Although extremal models do not form an elementary class in general, it is easy to verify
that they form an abstract elementary class (see \cite{Vilaveces} for definition).
Below we prove that this class has the elementary joint elementary embedding property.
Elementary amalgamation property is proved similarly.

As for types, complete theories may be regarded as normal positive linear functionals on the normed space of sentences.
A complete theory $\overline{T}$ is an \emph{extremal extension} of an incomplete theory $T_0$ if it is an extremal
point of the compact convex set $$\{T\supseteq T_0:\ T\ \mbox{is\ complete} \}.$$
The notion of a type for $T_0$ is defined in the usual way. Then, complete types for $T_0$
(with free variables $\x$) form a compact convex set which is dented by $S_{\x}(T_0)$.

\begin{proposition}
Let $A,B\vDash T$ be extremal. Then, there exists an extremal $K$ such that $A\preccurlyeq K$ and $B\preccurlyeq K$.
\end{proposition}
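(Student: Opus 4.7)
I would prove this by combining elementary amalgamation with the strong omitting types theorem (Theorem~\ref{strong omitting}), but initialising the omitting-types construction at a ``diagram face'' that already names every element of $A \cup B$. The plan has two phases.

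The first phase is to construct an elementary joint extension $K_0 \succcurlyeq A, B$ in which $A \cup B$ forms an \emph{extreme configuration}: every finite tuple of $A \cup B$ has extreme $L$-type over $\emptyset$. I would do this by transfinite back-and-forth, enumerating $B = \{b_\alpha : \alpha < \lambda\}$ and at stage $\alpha+1$ realising $\sigma(b_\alpha)$ in a sufficiently saturated elementary extension of a previously built $M_\alpha \succcurlyeq A$. Elementarity forces $\sigma(b_\alpha)$ to realise $q^\sigma := \sigma_*(tp^L_B(b_\alpha/B_{<\alpha}))$ over $\sigma(B_{<\alpha})$, and this type is extreme because $B$ is extremal and Proposition~\ref{extreme3} iterated transports extremeness to types over parameters. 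The fiber of the parameter-extension map $S_1^L(A \cup \sigma(B_{<\alpha})) \to S_1^L(\sigma(B_{<\alpha}))$ over $q^\sigma$ is then a face, so by Krein--Milman one picks an extreme type $p$ in this fiber, realises it, and uses Proposition~\ref{extreme3} to propagate the extreme-configuration invariant to $A \cup \sigma(B_{\leq\alpha})$. Limits along the chain of elementary extensions present no difficulty.

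The second phase is to apply Theorem~\ref{strong omitting} to the complete $L'$-theory $\bar T = Th_{L'}(K_0)$, where $L' = L \cup C_A \cup C_B$, starting the Henkin-style construction from the initial face $\Gamma_0 := tp^L_{K_0}(C_A \cup C_B)$ rather than $\emptyset$. The extreme-configuration property from Phase~1 is exactly what makes $\Gamma_0$ a face of $S_{|A|+|B|}^L(T)$: hereditary extremeness of all finite restrictions makes the infinite-variable type extreme, which makes its singleton a face. The enumeration is taken to cover all non-extreme $L$-types of $T$ on all tuples of constants, old and new. At an omitting step for a non-extreme $p(\bar z)$ and a tuple $\bar c$: if $\bar c$ lies inside $C_A \cup C_B$, then $\Gamma_i|_{\bar z} = \{tp^L_{\bar T}(\bar c)\}$ is already an extreme (hence $\neq p$) type and no new condition is needed; otherwise the argument of Theorem~\ref{omitting} via Proposition~\ref{ext2ext1} produces the witnessing $\theta \leq 0 \in p$ and $\epsilon > 0$ keeping the face property. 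The canonical model $K$ contains $A$ and $B$ elementarily via the interpretations of $C_A$ and $C_B$, omits every non-extreme $L$-type, and is therefore extremal.

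The hard part will be Phase~1, specifically ensuring that the Krein--Milman extreme type chosen in the fiber is not merely extreme in the ambient infinite-parameter space but extreme over every finite parameter sub-tuple; ambient extremeness does not in general descend under parameter-forgetting (a simple continuous affine map between compact convex sets can collapse an extreme point to a non-extreme image). The remedy is an inner transfinite construction that enumerates the finite sub-tuples of the current parameter set and successively restricts to fibers over extreme finite-parameter restrictions, using that the intersection of faces is a face and that the relevant fibers are non-empty by elementary amalgamation. Managing this nested construction while preserving elementarity of the partial embedding $\sigma$ is the most delicate part of the proof.
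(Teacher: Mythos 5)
Your two-phase strategy is viable and genuinely different from the paper's, but it is laborious where the paper is economical, and the difficulty you flag at the end is not the obstacle you think it is. The paper obtains your entire Phase 1 in one stroke: the complete $L(A\cup B)$-theories extending $T_0=\mathrm{ediag}(A)\cup\mathrm{ediag}(B)$ form a compact convex set, so one may pick $\overline{T}$ to be an \emph{extreme point} of that set (a single application of Krein--Milman at the level of theories, rather than one per element of $B$), and then check that this forces the enumerating tuple $\bar{a}\bar{b}$ of $A\cup B$ to be extreme: a decomposition $tp(\bar{a}/\bar{b})=\frac{1}{2}p_1(\bar{x},\bar{b})+\frac{1}{2}p_2(\bar{x},\bar{b})$ becomes, after substituting the constants, $\overline{T}=\frac{1}{2}T_1+\frac{1}{2}T_2$ with $T_1,T_2\supseteq T_0$ (their restrictions to the $L$-type of $\bar{a}$ agree because $A$ is extremal), so extremality of $\overline{T}$ collapses it. Phase 2 is then just Theorem \ref{strong omitting} applied to $\overline{T}$ followed by Proposition \ref{extreme3}: each $\bar{e}$ in the reduct is extreme over $A\cup B$, and since $A\cup B$ is extreme, $\bar{e}$ is extreme over $\emptyset$; no re-running of the Henkin construction with a seeded $\Gamma_0$ is needed. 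As for the ``most delicate part'' you identify: descent of extremeness under parameter-forgetting is exactly what Propositions \ref{ext2ext1} and \ref{extreme3} supply when the forgotten parameters are extreme over the retained ones --- which is precisely your inductive invariant --- so the extreme type you pick in the fiber is automatically extreme over every finite sub-tuple of the current parameter set, and your nested inner transfinite construction is unnecessary. With that repair your argument closes, but as written the key step is left open, and the one idea you are missing is the paper's single Krein--Milman application to the space of complete extensions of the joint diagram.
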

\begin{proof}
Let $$T_0={\mbox ediag}(A)\cup\mbox{ediag}(B).$$
By linear compactness, $T_0$ has a model.
Let $\overline T$ be a complete extremal extension of $T_0$ (in the language $L(A\cup B)$).
Let $\overline K$ be an extremal model of $\overline T$ and $K$ be its restriction to the language of $T$.
We have that $A\preccurlyeq K$ and $B\preccurlyeq K$. We show that $K$ is an extremal model of $T$.
Let $\e\in K$. Then, $\e$ is extreme over $A\cup B$. We show that $A\cup B$ is extreme.
Then, by \ref{extreme3}, we conclude that $\e$ is extreme over $\emptyset$.
So, assume $A$ is enumerated by a tuple $\a$ and $B$ is enumerated by a tuple $\b$.
To prove that $\a\b$ is extreme, we have only to show that $\a$ is extreme over $\b$.
Assume $$tp(\a/\b)=p(\x,\b)=\frac{1}{2}p_1(\x,\b)+\frac{1}{2}p_2(\x,\b). \ \ \ \ \ \ \ (*)$$
Let $q, q_1,q_2$ be the restrictions of $p,p_1,p_2$ to $L$ respectively.
Since $\a$ is extreme, we have that $q=q_1=q_2$.
Let us identify every $a_i\in \a$ with the corresponding constant symbol $c_{a_i}$
(similarly for $\b$). Then, $p(\a,\b)$ is in fact the $L(A\cup B)$-theory $\overline{T}$.
Similarly, $p_1(\a,\b)$ and $p_2(\a,\b)$ are other complete $L(A\cup B)$-theories containing $T_0$
which we may denote by $T_1$ and $T_2$ respectively.
Then, by $(*)$ above, we have that $T=\frac{1}{2}T_1+\frac{1}{2}T_2$.
Since $T$ is extreme, we must have that $T=T_1=T_2$.
We then conclude that $p(\x,\b)=p_1(\x,\b)=p_2(\x,\b)$ and hence $\a$ is extreme over $\b$.
\end{proof}
\vspace{1mm}

We call a model $K$ \emph{minimal} if it has no proper elementary submodel.

\begin{lemma}
Every compact model $M$ has a minimal elementary submodel.
\end{lemma}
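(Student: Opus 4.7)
The plan is to apply Zorn's lemma to the collection $\mathscr E$ of all elementary submodels of $M$, partially ordered by reverse inclusion. Since $M\in\mathscr E$, the collection is nonempty, and a maximal element under reverse inclusion is exactly a minimal elementary submodel. It therefore suffices to show that every descending chain $(K_i)_{i\in I}$ in $\mathscr E$ has a lower bound in $\mathscr E$; the natural candidate is the intersection $K=\bigcap_i K_i$.

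First I would check that $K$ is a well-defined $L$-substructure of $M$. Each $K_i$ is a complete metric subspace of the complete space $M$, hence closed in $M$, hence compact since $M$ is compact. The filtered family of nonempty compact sets $(K_i)$ has the finite intersection property, so $K$ is nonempty. Closure under constants and function symbols is inherited from each $K_i$, and $K$, being closed in compact $M$, is itself compact and a fortiori complete.

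The heart of the argument is showing $K\preccurlyeq M$, which I would do by induction on the complexity of $\phi$ to verify $\phi^M(\a)=\phi^K(\a)$ for every $\a\in K$. The atomic, scalar-multiple and sum cases are routine; the essential step is quantification. For $\phi=\inf_x\psi(x,\y)$, fix $\a\in K$ and set $r=\inf_{x\in M}\psi^M(x,\a)$. For each $i$, since $K_i\preccurlyeq M$ and $\a\in K_i$, one has $\inf_{x\in K_i}\psi^M(x,\a)=r$; by continuity of $\psi^M(\cdot,\a)$ on the compact set $K_i$ this infimum is attained, so $A_i=\{x\in K_i:\psi^M(x,\a)=r\}$ is nonempty and closed in $M$. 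The $A_i$ descend with $i$, hence by compactness $\bigcap_i A_i\neq\emptyset$; any $x_0$ in this intersection lies in $K$ with $\psi^M(x_0,\a)=r=\psi^K(x_0,\a)$ by the inductive hypothesis. Thus $\inf_{x\in K}\psi^K(x,\a)\leqslant r$, the reverse inequality being trivial, and the $\sup_x$ case is symmetric.

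The main obstacle is exactly this quantifier step: one needs an actual witness in $K$, not merely an approximate one, and this is precisely what compactness of $M$ supplies through attained extrema on each $K_i$ together with the finite intersection property of the witness sets $A_i$. Without compactness this strategy would collapse, so the hypothesis of the lemma is used in an essential way. Once $K\preccurlyeq M$ is established, Zorn's lemma applied to $\mathscr E$ yields a minimal elementary submodel of $M$.
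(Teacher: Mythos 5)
Your proof is correct and follows essentially the same route as the paper: both take a descending chain of elementary submodels, pass to the intersection, and handle the crucial quantifier step by using compactness to produce an exact witness inside the intersection (the paper extracts a convergent subsequence of approximate maximizers, while you intersect the closed sets of exact minimizers — the same idea). Your use of Zorn's lemma and the finite intersection property in place of the paper's appeal to a maximal (countable) chain is a cosmetic difference, and if anything slightly cleaner.
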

\begin{proof}
Let $\{K_i\}_{i\in I}$ be a maximal decreasing elementary chain of submodels of $M$.
In fact, $I$ is countable and there is no harm if we assume $I=\Nn$ (take a cofinal subset).
Let $K=\cap_nK_n$.
We show by induction on the complexity of formulas that $\phi^K=\phi^M$ for every $\phi$ with parameters in $K$.
The atomic and connective cases are obvious. Consider the case $\sup_x\phi(x)$.
Since the chain is elementary, there are $r,s$ such that for all $n$
$$r=\sup_x\phi^K(x)\leqslant\sup_x\phi^{K_n}(x)=s.$$
If $r<s$, there exists $a_n\in K_n$ such that $\frac{r+s}{2}\leqslant\phi^{K_n}(a_n)$.
Then $(a_n)$ has a subsequence converging to say $a$.
Moreover, $a\in K$ and $\frac{r+s}{2}\leqslant\phi^M(a)$. This is a contradiction.
We have therefore that $K\preccurlyeq M$ and $K$ is minimal.
\end{proof}
\vspace{1mm}

\begin{example} \label{sphere}\emph{
(i) We show that the unit circle $\mathbf{S}\subseteq\Rn^2$ equipped with the Euclidean metric is minimal
(temporarily, we allow metric spaces of diameter 2).
Let $K\preccurlyeq\mathbf{S}$ be minimal and $a\in K$. Then $d(x,a)$ is maximized by $-a$.
So, $-a\in K$. Also, there is $b\in K$ which maximizes $d(x,a)+d(x,-a)$ (with value $2\sqrt{2}$). So, $-b\in K$.
Generally, for each distinct $a,b\in K$, the midpoints of the arcs joining $a$ to $b$
are obtained as the maximizer of $d(x,a)+d(x,b)$ and its antipode.
We conclude that $K$ contains a dense subset of $\mathbf{S}$.
Since $K$ is closed, we must have that $K=\mathbf{S}$.\\
(ii) Similar argument shows that the unit disc $\mathbf B\subseteq\Rn^2$ equipped with the Euclidean metric is minimal.}
\end{example}
\bigskip

If $f:M\rightarrow N$ is a partial elementary map with domain $A$ and $p(\x)\in S_n(A)$,
then its shift $$f(p)(\phi(\x,f(\a)))=p(\phi(\x,\a))$$
is a type over $f(A)$. If $p$ is extreme, then $f(p)$ is extreme.

\begin{corollary} \label{minimal model}
If $T$ has a compact model, then $M\vDash T$ is separable extremal if and only if it is compact minimal.
Moreover, $T$ has a unique such model.
\end{corollary}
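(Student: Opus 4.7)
The plan is to prove both implications and uniqueness together by back-and-forth arguments, driven by Corollary \ref{compact-realization1} (a compact model realizes every extreme type over every parameter set, since expansion by constants preserves compactness) and Theorem \ref{strong omitting} combined with downward L\"{o}wenheim-Skolem (which produces a \emph{separable} extremal model of $T$ omitting any prescribed non-extreme type). Existence of a compact minimal model will follow from the preceding lemma applied to the hypothesized compact model of $T$.

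For the direction compact minimal implies separable extremal, I would let $K$ be a compact minimal model of $T$; compactness gives separability, so the content is extremality. Suppose toward contradiction that some $\a\in K$ has a non-extreme type $p$. Theorem \ref{strong omitting} combined with L\"{o}wenheim-Skolem then yields a separable extremal $M_{0}\vDash T$ omitting $p$. I would build an elementary embedding $f\colon M_{0}\hookrightarrow K$ by back-and-forth along a countable dense sequence $\{m_{n}\}\subseteq M_{0}$: inductively, having arranged that $tp_{K}(f(m_{0}),\ldots,f(m_{n-1}))=tp_{M_{0}}(m_{0},\ldots,m_{n-1})$, the full tuple type $tp_{M_{0}}(m_{0},\ldots,m_{n})$ is extreme (since $M_{0}$ is extremal), so by Proposition \ref{extreme3} the conditional $tp_{M_{0}}(m_{n}/m_{0},\ldots,m_{n-1})$ is extreme, its shift to the chosen images in $K$ is still extreme, and Corollary \ref{compact-realization1} applied to the compact expansion of $K$ by those images delivers a realizer $f(m_{n})\in K$. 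Extending $f$ by uniform continuity gives an elementary embedding, and $f(M_{0})\preccurlyeq K$ is proper because it omits $p$ whereas $\a\in K$ realizes $p$, contradicting minimality of $K$.

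For the reverse direction together with uniqueness, I would let $M$ be separable extremal and $N$ a compact model. The same back-and-forth produces an elementary embedding $M\hookrightarrow N$; its image is complete (hence closed) inside compact $N$, therefore compact, and so $M$ itself is compact. Uniqueness of separable extremal models then follows by a two-sided back-and-forth between any two such $M_{1},M_{2}$: both are compact and hence realize every extreme type over every parameter tuple, so extreme tuple types on either side are matched on the other via shifts. For minimality, any proper elementary submodel $M'\preccurlyeq M$ inherits extremality and separability, hence is compact by the previous step, hence by uniqueness isomorphic to $M$; composing this isomorphism with the inclusion gives an elementary, hence isometric, self-embedding $M\to M$, which must be surjective because $M$ is a compact metric space (otherwise iterates of a missed point produce a sequence with pairwise distances bounded below, contradicting total boundedness), forcing $M'=M$.

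The hard part will be the extremality in the first direction. A direct density or closure argument inside $K$ fails because the extreme locus $E_{n}(T)$ is not known to be closed in $S_{n}(T)$, so metric limits of tuples with extreme types need not themselves have extreme types. The detour through Theorem \ref{strong omitting} bypasses this: instead of trying to prove every tuple in $K$ has extreme type by continuity, one uses a hypothetical non-extreme realization as a witness to build a proper elementary submodel, contradicting minimality. The care will be in the back-and-forth step itself, where Proposition \ref{extreme3} and the shift-of-type construction are what ensure each successive extension can be matched in the compact target.
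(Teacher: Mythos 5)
Your proposal is correct; every step goes through, and the engine driving it --- the forth construction that matches extreme conditional types (via Proposition \ref{extreme3} and the shift of an extreme type under a partial elementary map) into a compact target that realizes all extreme types over parameters (Corollary \ref{compact-realization1} applied to the compact expansion by constants) --- is exactly the engine of the paper's proof. The difference is in the arrangement. The paper proves the whole corollary in one stroke: it takes the separable extremal $M$ given by Theorem \ref{strong omitting} plus downward L\"owenheim--Skolem and an arbitrary compact minimal $K$ (which exists by the preceding lemma), forth-embeds $M$ into $K$, and lets minimality of $K$ force the image to be all of $K$; the single isomorphism $M\simeq K$ then yields both implications and uniqueness simultaneously. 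You instead prove the pieces separately: extremality of a compact minimal $K$ by contradiction (embedding a separable extremal model that omits a putative non-extreme type of $K$ as a necessarily proper elementary submodel), compactness of a separable extremal $M$ by embedding it into any compact model and noting the image is closed, uniqueness by a two-sided back-and-forth, and minimality via the fact that an isometric self-embedding of a compact metric space is surjective. Your decomposition is longer but each implication is self-contained, and your diagnosis of the ``hard part'' is accurate: since $E_n(T)$ is not known in advance to be closed, extremality of $K$ cannot be obtained by a naive limit argument and must come from an embedding of an omitting model, which is precisely what both proofs do. What the paper's version buys is economy --- one embedding does all the work and minimality of $K$ is never re-proved, only used; what yours buys is that the equivalence and the uniqueness are visibly independent of the auxiliary lemma once a single compact model is in hand.
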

\begin{proof}
By Proposition \ref{strong omitting} and downward L\"{o}wenheim-Skolem theorem,
$T$ has separable extremal model $M$.
Let $K\vDash T$ be compact minimal. We show that $M\simeq K$.
Let $\{a_0,a_1,...\}\subseteq M$ be dense.
Let $f_{-1}=\emptyset$ and assume a partial elementary map
$f_i:\{a_0,...,a_i\}\rightarrow K$ is defined. Since $(a_0,...,a_{i+1})$
is extreme, $p(x)=tp(a_{i+1}/a_0...a_i)$ is extreme.
So, its shift $f_i(p)$ is an extreme type over $f_i(a_0)... f_i(a_i)$ realized by say $b\in K$.
Therefore, $f_{i+1}=f_i\cup\{(a_{i+1},b)\}$ is elementary.
Let $f$ be the continuous extension of $\cup_i f_i$ to all $M$.
Then, $f:M\rightarrow K$ is elementary. Since $K$ is minimal, we have that $M\simeq K$.
\end{proof}

It is well-known that if a compact convex set is metrizable, its set of extreme points is a $G_\delta$ set.
If $T$ has a compact model, its minimal model $K$ realizes exactly the extreme types.
So, $E_n(T)$ is a closed subset of $S_n(T)$.
Similarly, since $K^\omega$ is compact, $E_\omega(T)\subseteq S_\omega(T)$ is closed.
Also, since that map $\a\mapsto tp(\a)$ is $1$-Lipschitz, $E_n(T)$ is compact with the metric topology.
So, by maximality of compact Hausdorff topologies, metric and logic topologies must coincide on every $E_n(T)$.
\bigskip

\begin{proposition}\label{ext-first}
Assume $L$ is countable. If $T$ has a first order model, every extremal model of $T$ is first order.
If $T$ has a compact model, it has a unique extremal model namely its minimal one.
\end{proposition}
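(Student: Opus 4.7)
The plan is to prove the two statements by direct appeal to Corollary \ref{compact-realization1}, Corollary \ref{minimal model} and the joint elementary embedding property for extremal models established just above, with essentially no new technology needed. For the first statement, assume $T$ has a first order model $M_0$ and let $N\vDash T$ be extremal. Fix an arbitrary tuple $\a\in N$; since $N$ is extremal, $tp(\a)\in E_n(T)$. Choose any countably incomplete ultrafilter $\mathcal F$: as observed in the paragraph immediately after Corollary \ref{compact-realization1}, $M_0^{\mathcal F}$ realizes every type in $\mathbb{S}_n(\mathbb T_{M_0})$, and in particular realizes $tp(\a)$, say by a tuple $\b$. Because ultraproducts of first order structures are first order, the metric of $M_0^{\mathcal F}$ is $\{0,1\}$-valued and each relation symbol is $\{0,1\}$-valued there, so $\theta^N(\a)=\theta^{M_0^{\mathcal F}}(\b)\in\{0,1\}$ for every atomic formula $\theta$. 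Since this holds for every $\a\in N$, the structure $N$ is first order.

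For the second statement, let $K_0$ denote the compact minimal model of $T$, produced by applying the minimality lemma above to any compact model of $T$ and identified as extremal by Corollary \ref{minimal model}. Given any extremal $N\vDash T$, I would apply the joint elementary embedding property for extremal models (the proposition immediately preceding the minimality lemma) to obtain an extremal $K\vDash T$ with $K_0\preccurlyeq K$ and $N\preccurlyeq K$. The critical step is to show $K=K_0$: take $a\in K$; extremality of $K$ together with the finite characterization of extremity recorded just after Proposition \ref{ext2ext1} implies $tp(a/K_0)$ is extreme. Since $K_0$ is compact, the expansion $(K_0,c)_{c\in K_0}$ remains a compact $L(K_0)$-structure, so Corollary \ref{compact-realization1} yields $a'\in K_0$ realizing $tp(a/K_0)$. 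Evaluating the $L(K_0)$-formula $d(x,a')$ on both sides gives $d(a,a')=d(a',a')=0$, hence $a=a'\in K_0$. Thus $K=K_0$, which forces $N\preccurlyeq K_0$; minimality of $K_0$ then yields $N=K_0$, and in particular $N\simeq K_0$.

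The main obstacle is really the absorption step $a=a'$ in the compact case: what makes it work is the interplay between extremality of the ambient $K$ (which forces $tp(a/K_0)$ to be extreme, hence realized in the compact $K_0$) and evaluating the distance formula $d(x,a')$ at the parameter $a'$ itself, which upgrades a mere coincidence of types to a genuine equality. Once this absorption lemma is in place, the rest is straightforward assembly of the earlier results of the excerpt.
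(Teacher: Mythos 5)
Your proof is correct, but it takes a genuinely different route from the paper's, especially in the second half. For the first claim, the paper constructs an elementary embedding of a separable extremal model into $M_0^{\mathcal F}$ by successively realizing extreme types over finite sets (using extremal $\aleph_1$-saturation of the ultrapower); you instead note that each tuple's extreme type is realized in the first order structure $M_0^{\mathcal F}$ and that atomic formulas, being linear formulas, take equal values on tuples of equal linear type, so their values on $N$ lie in $\{0,1\}$. Your version is shorter and proves exactly the stated conclusion, whereas the paper's argument yields the stronger fact that separable extremal models embed elementarily into $M_0^{\mathcal F}$. For the second claim, the paper appeals to the closedness of $E_n(T)$ (established just before the proposition), the CL-completeness of $T_e$ from Proposition \ref{extremal embedding}, and the fact that a compact structure is determined up to isomorphism by its CL-theory; you stay entirely inside the linear framework, combining the joint elementary embedding property with an absorption argument: embedding $K_0$ and $N$ into a common extremal $K$, noting that extremality of $K$ makes $tp(a/K_0)$ extreme, realizing it in the compact $K_0$ by Corollary \ref{compact-realization1}, and using the formula $d(x,a')$ to force $a=a'\in K_0$, whence $K=K_0$ and minimality finishes the job. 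This avoids all the CL machinery at the price of invoking the joint embedding proposition, and the absorption step is a clean self-contained proof that a compact minimal model admits no proper extremal elementary extension. Both approaches are sound.
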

\begin{proof}
Let $M\vDash T$ be first order and $\mathcal F$ be a countably incomplete ultrafilter on $\Nn$.
So, $M^{\mathcal F}$ is extremally $\aleph_1$-saturated. Let $K\vDash T$ be extremal.
We have only to show that every separable elementary submodel of $K$ is embedded in $M^{\mathcal F}$.
So, assume $K$ itself is separable. Let $\{a_0,a_1,...\}\subseteq K$ be dense. Assume a partial elementary map
$f_i:\{a_0,...,a_i\}\rightarrow M^{\mathcal F}$ is defined.
Since $tp(a_{i+1}/a_0\ldots a_i)$ is extremal, there exists $b\in M^{\mathcal F}$
such that $f_i\cup\{(a_{i+1},b)\}$ is partial elementary.
Then $\cup_i f_i$ extends to an elementary map $f:K\rightarrow M^{\mathcal F}$.

For the second part, let $M\vDash T$ be the unique separable extremal model of $T$ (hence compact).
Since $E_n(T)$ is closed, all extremal models of $T$ are CL-equivalent (hence isomorphic) to $M$.
Therefore, $M$ is the unique extremal model of $T$.
\end{proof}
\vspace{1mm}

\begin{proposition}
Let $L$ be countable and $\mathbb T$ be a complete first order theory which can be axiomatized by linear axioms,
i.e. there is a linear theory $T$ with the same first order models as $\mathbb T$. Then, the following hold:

(i) If every $E_n(T)$ is closed, then $T_e=\mathbb{T}$.

(ii) If $\mathbb T$ is $\aleph_1$-categorical, then every $E_n(T)$ is closed.
\end{proposition}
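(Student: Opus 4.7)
My plan is to handle the two parts in sequence, using the preceding machinery.

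For (i), assume each $E_n(T)$ is closed. Proposition~\ref{extremal embedding} then furnishes a CL-complete theory $T_e$ axiomatizing the extremal models of $T$, and I will show that $T_e$ and $\mathbb{T}$ have the same model class. Theorem~\ref{omitting} combined with Proposition~\ref{ext-first} provides an extremal first-order $M_0\vDash T$, and since the first-order models of $T$ and of $\mathbb{T}$ coincide, $M_0\vDash\mathbb{T}$. Any $N\vDash T_e$ is extremal, first-order by Proposition~\ref{ext-first}, and a model of $T$, hence a first-order model of $\mathbb{T}$. For the reverse, a first-order $N\vDash\mathbb{T}$ is a first-order model of $T$ with $N\equiv_{\mathrm{fo}}M_0$ by completeness of $\mathbb{T}$; I will invoke that on first-order (discrete-metric) structures $\equiv_{\mathrm{fo}}$ and $\equiv_{\mathrm{CL}}$ agree, verified by induction on CL-formulas: atomic formulas take values in $\{0,1\}$, linear combinations of them have finite range, and $\sup_x\phi$, $\inf_x\phi$ are attained with their values determined by the first-order sentences ``$\exists x(\phi(x)=r)$'' for $r$ in the finite range. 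Then $N\equiv_{\mathrm{CL}}M_0\vDash T_e$, so CL-completeness of $T_e$ gives $N\vDash T_e$.

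For (ii), the plan is to exhibit $E_n(T)$ as the continuous image in $S_n(T)$ of the compact first-order type space $S_n^{\mathrm{fo}}(\mathbb{T})$, yielding closedness immediately. Since $\aleph_1$-categorical $\mathbb{T}$ has infinite first-order models, $T$ is ample (distinct elements of an infinite first-order model are pairwise at distance $1$), so Proposition~\ref{extremal4} produces an uncountable extremal $M_0\vDash T$; Proposition~\ref{ext-first} makes $M_0$ first-order, hence $M_0\vDash\mathbb{T}$. Because every uncountable model of an $\aleph_1$-categorical first-order theory is saturated, $M_0$ realizes every first-order type $q\in S_n^{\mathrm{fo}}(\mathbb{T})$, and extremality of $M_0$ forces every realized $tp(\bar{a})$ to lie in $E_n(T)$. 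I will then define $\pi:S_n^{\mathrm{fo}}(\mathbb{T})\to S_n(T)$ sending $q$ to the linear type of any of its realizations in any first-order model of $\mathbb{T}$; the above shows $\pi(S_n^{\mathrm{fo}}(\mathbb{T}))\subseteq E_n(T)$, while the reverse inclusion follows from Proposition~\ref{extremal5} together with Proposition~\ref{ext-first}, which realize every extreme type in some first-order extremal model. Hence $E_n(T)=\pi(S_n^{\mathrm{fo}}(\mathbb{T}))$, which is compact as the continuous image of a compact Stone space, and in particular closed in $S_n(T)$.

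The main obstacle is verifying that $\pi$ is well-defined and continuous, i.e.\ that on a first-order model of $\mathbb{T}$ the value of every linear CL-formula on a tuple $\bar{a}$ depends only on $tp^{\mathrm{fo}}(\bar{a})$. This rests on the observation that $\sup_x\psi^N(\bar{a},x)$ is the maximum of the finite range of $\psi^N(\bar{a},\cdot)$, determined by which first-order atomic patterns over $\bar{a}$ are realized in $N$; those realized patterns are in turn forced by the first-order sentences ``$\exists x(\mathrm{atomic}(x,\bar{a}))$'' in the complete theory $\mathbb{T}$. This is where the hypothesis that $T$ and $\mathbb{T}$ share first-order models is used essentially.
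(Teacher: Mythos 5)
Your part (i) is essentially the paper's argument: both take a separable extremal (hence, by Proposition \ref{ext-first}, first-order) model $K$, note $K\vDash\mathbb T$, and identify $T_e$ with $\mathbb T$ via the fact that on discrete structures first-order equivalence and CL-equivalence coincide; you merely spell out the $\equiv_{\mathrm{fo}}\Rightarrow\equiv_{\mathrm{CL}}$ induction that the paper leaves implicit. (One small slip: the extremal model comes from Theorem \ref{strong omitting}, not Theorem \ref{omitting}, which only omits a single type.) Part (ii) is correct but takes a genuinely different route. The paper argues at the level of model classes: extremal models are first-order, hence models of $\mathbb T$; by Proposition \ref{extremal4} and Morley's theorem (categoricity in all uncountable powers) the unique model of $\mathbb T$ in each uncountable cardinality is extremal, hence every model of $\mathbb T$ is extremal; so the extremal models are axiomatized by $\mathbb T$ and Proposition \ref{extremal embedding} closes the loop to give closedness of $E_n(T)$. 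You instead work at the level of type spaces: using the saturation of uncountable models of an $\aleph_1$-categorical theory, you realize every first-order type in a single uncountable extremal model and exhibit $E_n(T)$ as the image of the compact Stone space $S_n^{\mathrm{fo}}(\mathbb T)$ under the continuous ``restriction'' map $\pi$ (well-defined because the value of a linear formula on a tuple in a discrete structure is determined by the tuple's first-order type, via the finite-range translation). Your approach buys a direct proof of compactness of $E_n(T)$ and an explicit surjection $S_n^{\mathrm{fo}}(\mathbb T)\twoheadrightarrow E_n(T)$, bypassing Proposition \ref{extremal embedding} entirely; the paper's is shorter given that proposition and identifies the extremal class itself as $\mathrm{Mod}(\mathbb T)$, which is a slightly stronger structural conclusion. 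Both rely on Morley-strength input. The only point you should tighten is the well-definedness/continuity of $\pi$ for formulas with nested quantifiers: it is not enough to track which atomic patterns over $\bar a$ are realized; one needs the full induction producing, for each linear $\phi$ and each $r$ in its finite value set, a first-order formula $\chi_{\phi,r}$ defining $\{\bar a:\phi(\bar a)=r\}$, with the $\sup_x$ step handled by $\exists x\,\chi_{\phi,r}\wedge\forall x\bigvee_{s\leqslant r}\chi_{\phi,s}$. This is routine but is the load-bearing step of your argument.
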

\begin{proof}
(i) Let $K\vDash T$ be separable and extremal. Then, $K$ is first order and $T_e$ is the CL-theory of $K$.
By the assumption, $K\vDash\mathbb{T}$. So, $T_e=\mathbb{T}$. Let $M\vDash T$ be first order.
Then, $M\vDash\mathbb{T}=T_e$. So, $M$ is extremal.

(ii) Every extremal model of $T$ is first order, hence a model of $\mathbb T$.
On the other hand, by \ref{extremal4}, $T$ has extremal models of arbitrarily large cardinalities.
So, in fact, every model of $\mathbb T$ is extremal. Therefore, extremal models are axiomatized by $\mathbb T$.
\end{proof}

It is not hard to see that DLO, DAG, ODAG, $\mathrm{ACF}_p$, RCF (see \cite{Marker}) and many other
first order theories can be axiomatized by linear axioms with integer coefficients.
\vspace{2mm}

A theory $T$ has quantifier-elimination if for every formula $\phi$ there is a
quantifier-free formula $\psi$ such that $T\vDash\phi=\psi$.
We call a tuple $\a\in M\vDash T$ \emph{first order} if for every atomic $\theta(\x)$
one has that $\theta^M(\a)=0$ or $\theta^M(\a)=1$.

\begin{proposition}
Assume $L$ is countable and $T$ has quantifier-elimination. Then, every first order tuple is exposed.
In particular, if $T$ has a first order model $K$, then $T_e$ exists and coincides with the first order theory of $K$.
\end{proposition}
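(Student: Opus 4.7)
The plan is to use quantifier-elimination to exhibit, for every first-order tuple $\a\in M\vDash T$, a non-negative continuous affine function $\rho$ on $S_n(T)$ whose unique zero is $tp(\a)$, so that $-\rho$ exposes $tp(\a)$. The ``in particular'' clause will then follow quickly: in a first-order model $K$ every tuple is first-order, hence exposed and a fortiori extreme, so $K$ is extremal, and combining this with an $\aleph_1$-saturated ultrapower will force $E_n(T)$ to be closed and allow me to identify $T_e$ with $\mathbb T_K$ via Proposition \ref{extremal embedding}.

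For the construction of $\rho$: enumerate the countably many atomic $L$-formulas $\theta_1(\x),\theta_2(\x),\dots$, let $v_i=\theta_i^M(\a)\in\{0,1\}$, and under the standard normalization that atomic formulas take values in $[0,1]$ set $\rho_i=\theta_i$ when $v_i=0$ and $\rho_i=1-\theta_i$ when $v_i=1$, so that $\rho_i\geqslant 0$ on $S_n(T)$ and $\rho_i^M(\a)=0$. The series $\rho=\sum_{i\geqslant1}2^{-i}\rho_i$ converges uniformly to a continuous affine function on $S_n(T)$ which is still non-negative and vanishes at $tp(\a)$. If some $q\in S_n(T)$ satisfies $q(\rho)=0$ then $q(\rho_i)=0$ and hence $q(\theta_i)=v_i$ for every $i$. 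Quantifier-elimination makes every element of $\mathbb D_n$ a $T$-equivalent linear combination of atomic formulas, so $q$ is determined by its values on the $\theta_i$; thus $q=tp(\a)$, and $-\rho$ is a continuous linear functional uniquely maximized at $tp(\a)$.

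For the second assertion, suppose $T$ has a first-order model $K$. By the first step every tuple of $K$ has exposed, hence extreme, type, so $K$ is extremal. Pick a countably incomplete ultrafilter $\mathcal F$ on $\mathbb N$; by Proposition \ref{sat1} the ultrapower $K^{\mathcal F}$ is extremally $\aleph_1$-saturated, and since $K^{\mathcal F}$ is again first-order it realizes every extreme type in $S_n(T)$ by a first-order tuple. Consequently the continuous restriction map $\xi:\mathbb S_n(\mathbb T_K)\to S_n(T)$ has image exactly $E_n(T)$, so $E_n(T)$ is a compact (hence closed) subset of $S_n(T)$. By Proposition \ref{extremal embedding}, $T_e$ exists and is CL-complete; as $K$ is extremal, $K\vDash T_e$, and CL-completeness forces $T_e=\mathbb T_K$.

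The one delicate point is that $\rho$ is only a sup-norm limit of formulas and not itself a single element of $\mathbb D_n$. I read the notion of exposedness in the compact convex set $S_n(T)$ as allowing any continuous linear functional on $S_n(T)$, which includes such uniform limits (equivalently, definable predicates in the sense of continuous logic); if one insisted on an actual formula, one would need the atomic type of $\a$ to be finitely determined, which does not follow from quantifier-elimination in general.
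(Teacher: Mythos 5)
Your proof of the main claim is essentially the paper's own: the paper enumerates the atomic formulas $\theta_i$, replaces each by $\hat\theta_i\in\{\theta_i,1-\theta_i\}$ so that $p(\hat\theta_i)=1$, and uses the extended formula $\sum_i 2^{-i}\hat\theta_i$ as the exposing functional (your $\rho$ is just $1$ minus this), with quantifier elimination guaranteeing that a type is determined by its atomic values; the paper even makes the same caveat you do, that this is a definable predicate rather than a finitary formula. For the ``in particular'' clause you take a slightly different but equally valid route: you show $E_n(T)$ is closed directly, by noting that $K^{\mathcal F}$ is first order, extremal and realizes all extreme types, and then invoke Proposition \ref{extremal embedding}, whereas the paper instead cites Proposition \ref{ext-first} to conclude that the extremal models are exactly the first order models and observes that being first order is CL-expressible. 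Both derivations rest on the same earlier results, and yours has the mild advantage of not needing the CL-expressibility of ``first order''; no gaps.
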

\begin{proof}
Let $\a\in M\vDash T$ be first order and $p(\x)=tp(\a)$.
Let $\theta_1,\theta_2,...$ be an enumeration of all atomic formulas with free variables $\x$.
Then $p$ is uniquely determined by the values $p(\theta_i)$.
Let $\hat\theta_i=\theta_i$ if $p(\theta_i)=1$ and $\hat\theta_i=1-\theta_i$ if $p(\theta_i)=0$.
Let $$\phi(\x)=\sum_{1\leqslant i} 2^{-i}\ \hat\theta_i(\x).$$
Although $\phi$ is not a finitary formula, $\phi^M(\a)$ and $p(\phi)$ are meaningful.
Furthermore, the (extended) condition $1\leqslant\phi(\x)$ exposes $p$,
i.e. realizations of $p$ are exactly the solutions of this condition
(or that, $p$ is the unique solution of $1\leqslant p(\phi)$).
In particular, every first order $M\vDash T$ is extremal. Conversely, by Proposition \ref{ext-first},
every extremal $M\vDash T$ is first order.
So, extremal models are exactly the first order models of $T$.
Since being a first order is CL-expressible, we conclude that $T_e$ exists and coincides with
the first order theory of any first order $K\vDash T$.
\end{proof}

\begin{proposition}
Let $L$ be countable and $M\vDash T$ be infinite and first order.
Then, every $E_n(T)$ is finite if and only if some extremal $K\vDash T$ has an
$\aleph_0$-categorical first order theory (in which case, $T_e$ is the first order theory of $K$).
\end{proposition}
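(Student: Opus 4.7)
The plan is to run both directions by assembling Propositions \ref{ext-first}, \ref{extremal embedding}, \ref{complete}, and the remark following Corollary \ref{compact-realization1}. The preliminary observation that drives everything is that, because $T$ has a first order model, Proposition \ref{ext-first} forces every extremal model of $T$ to be a classical first order structure; on such a structure the CL-types and the first order types of tuples correspond bijectively, since every atomic formula takes values in $\{0,1\}$ so the value of any CL-formula at a tuple is determined by finitely many Boolean atomic values, while first order formulas are themselves CL-formulas.

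For the implication $(\Rightarrow)$, I would first note that finiteness of $E_n(T)$ forces it to be closed in the Hausdorff logic topology, so Proposition \ref{extremal embedding} produces the CL-theory $T_e$ of extremal models of $T$, and Proposition \ref{complete} gives that $\mathbb{S}_n(T_e)$ is homeomorphic to $E_n(T)$, hence finite for every $n$. Choosing any separable extremal $K\vDash T$ (obtained via Theorem \ref{strong omitting} and downward L\"owenheim--Skolem), the preliminary observation tells me that $K$ is first order and that $T_e$, as the CL-theory of $K$, coincides with the first order theory of $K$. Finiteness of $\mathbb{S}_n(T_e)$ then translates to finiteness of the first order $n$-type space of $K$, so classical Ryll--Nardzewski yields $\aleph_0$-categoricity of the first order theory of $K$.

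For the implication $(\Leftarrow)$, suppose some extremal $K\vDash T$ has $\aleph_0$-categorical first order theory. Proposition \ref{ext-first} again forces $K$ to be first order, and the remark after Corollary \ref{compact-realization1} (applied to $K$ in place of $M$) yields that $K$ realizes every extreme type in every $S_n(T)$. Classical Ryll--Nardzewski implies $K^n$ realizes only finitely many first order types, and therefore only finitely many linear types, since on a first order structure the first order type of a tuple determines its linear type. Hence $E_n(T)$, being contained in the set of linear types realized in $K$, is finite.

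The only point demanding genuine care is the identification of CL-types with first order types on an extremal $K$, which is needed to bridge Ryll--Nardzewski (a statement about first order type spaces) with the CL-theoretic machinery of the earlier propositions; this identification is routine once we have used Proposition \ref{ext-first} to reduce to a classical first order $K$. The remaining assembly is mechanical, as is the final parenthetical claim that $T_e$ is the first order theory of $K$, which is read off in the $(\Rightarrow)$ argument.
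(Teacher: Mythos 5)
Your argument is correct in outline but follows a genuinely different route from the paper's. The paper never passes through $T_e$: for the forward direction it works entirely inside the linear framework, taking a separable extremal $K\preccurlyeq M^{\mathcal F}$, showing there are only finitely many extreme types over any finite tuple of $K$ (all realized in $K$ via Proposition \ref{dense-realization} and Lemma \ref{facetoface}), and running a back-and-forth to conclude that $K$ is oligomorphic, whence Ryll--Nardzewski; for the converse it notes that an oligomorphic $K$ realizes only finitely many types and invokes Proposition \ref{dense-realization} so that $E_n(T)$ sits inside this finite set. You replace the back-and-forth by the heavier machinery of Propositions \ref{extremal embedding} and \ref{complete} (finiteness of $E_n(T)$ gives closedness, hence $T_e$ exists and $\mathbb{S}_n(T_e)$ is homeomorphic to $E_n(T)$, hence finite), which buys a shorter forward direction at the cost of having to translate between CL-types and first order types; the paper's route stays inside the linear type spaces and only touches the classical setting at the last step. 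Your converse is essentially the remark following Corollary \ref{compact-realization1}, transported to $K$ via Proposition \ref{ext-first}.

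The one place where your write-up is not yet a proof is the ``preliminary observation''. As stated --- the value of any CL-formula at a tuple is determined by finitely many Boolean atomic values --- it is false for quantified formulas: $\sup_y\phi(\x,y)$ evaluated at $\a$ depends on the whole structure, not on atomic values of $\a$. What is true, and what you actually need, is that on a first order structure every CL-formula has finite range and each level set is defined by a classical first order formula; this is proved by induction on complexity (for $\sup_y\phi$, the level sets of $\phi$ being FO-definable and its range finite, the supremum is attained and equals the largest $v$ with $K\vDash\exists y\,\chi_v(\a,y)$). From this one gets both that the FO type of a tuple determines its CL-type and that FO-equivalent first order structures are CL-equivalent; the latter is what you need for the surjectivity half of your ``bijective correspondence'', since it guarantees that every FO type of $Th^{FO}(K)$ is realized in a model of $T_e$. (You can in fact bypass surjectivity in the forward direction: finitely many CL-types realized in $K$ already yields finitely many realized FO types, and realized types are dense in the classical Stone space, which is therefore finite.) Once this induction is written down, the remaining assembly is sound.
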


\begin{proof}
Assume an extremal $K\vDash T$ has an $\aleph_0$-categorical theory.
Then, $K$ is oligomorphic, i.e. there are only finitely many orbits of automorphisms in any $K^n$.
So, by Proposition \ref{dense-realization}, every $E_n(T)$ is finite.
Conversely, assume every $E_n(T)$ is finite. We may replace $M$ with $M^{\mathcal F}$
where $\mathcal F$ is a countably incomplete ultrafilter on $\Nn$.
Let $K$ be a separable extremal (and hence first order) model of $T$.
We may further assume that $K\preccurlyeq M^{\mathcal F}$.
We show that $K$ has an $\aleph_0$-categorical first order theory.
Let $\a\in K$. Suppose $\b_1,\b_2,...\in M^{\mathcal F}$ have distinct extreme types over $\a$.
Then, $\a\b_1, \a\b_2,...$ have distinct extreme types which is impossible as every $E_n(T)$ is finite.
So, there are only finitely many extreme $n$-types over $\a$ and all these types must be realized in $K$
by Proposition \ref{dense-realization}.
Let $\a,\b\in K$,\ \ $\a\equiv\b$ and $c\in K$ be given. Then $tp(\a,c)=p(\x,y)$ is extreme.
Since $\b$ realizes $p|_{\x}$, $p(\b,y)$ is satisfiable and extreme by Lemma \ref{facetoface}.
So, it is realized by say $e\in K$ and hence $\a c\equiv\b e$.
An easy back and fourth construction shows that for $\a,\b\in K$,
if $\a\equiv\b$, there is an automorphism $f$ of $K$ such that $f(\a)=\b$.
Since for each $n$ there are only finitely many extreme $n$-types, $K$ is oligomorphic.
Therefore, it has an $\aleph_0$-categorical first order theory.
\end{proof}

Similar to the CL case, we call a complete linear theory $\kappa$-categorical if
it has a unique model of density character $\kappa$ up to isomorphism.
Recall that a compact convex set which is not a singleton, must have a non-extreme point.

\begin{corollary}
Let $T$ be a complete theory in a countable language $L$. Then $T$ is not $\aleph_0$-categorical.
\end{corollary}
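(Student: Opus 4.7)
The plan is to split on whether $T$ is trivial. If $T$ is trivial, the unique model is the singleton structure, which has density character $1$, so $T$ has no model of density character $\aleph_0$ and is therefore not $\aleph_0$-categorical.

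For non-trivial $T$, the strategy is to exhibit two non-isomorphic separable models of density character $\aleph_0$, distinguished by whether they realize a fixed non-extreme $2$-type. The key ingredient is the recalled fact: since $T$ is non-trivial, some model contains distinct elements $a \neq b$, and then $tp(a,a)$ and $tp(a,b)$ are distinct points of $S_2(T)$ (they disagree on the formula $d(x,y)$); the recalled fact therefore produces a non-extreme $q \in S_2(T)$, for instance $q = \frac{1}{2}tp(a,a) + \frac{1}{2}tp(a,b)$. I would then build the first separable model by realizing $q$ in some model $N \vDash T$ (available because $T \cup q(\bar c)$ is satisfiable for new constants $\bar c$), enlarging $N$ by an ultramean to cardinality at least $\aleph_0$, and taking a downward L\"{o}wenheim--Skolem submodel $M_1$ anchored at $\bar c$ together with countably many pairwise distinct elements; by elementarity $M_1$ realizes $q$. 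I would build the second separable model by applying Theorem~\ref{omitting} to obtain some $K \vDash T$ omitting $q$, and then taking a downward L\"{o}wenheim--Skolem submodel $M_2 \preccurlyeq K$ anchored at countably many pairwise distinct elements of $K$; by elementarity $M_2$ still omits $q$. Since $M_1$ realizes $q$ and $M_2$ omits it, $M_1 \not\simeq M_2$, so $T$ is not $\aleph_0$-categorical.

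The only technical point is ensuring that both $M_1$ and $M_2$ have density character exactly $\aleph_0$ rather than being accidentally finite; this is automatic once the L\"{o}wenheim--Skolem submodels are infinite, because an infinite complete metric space must have density character at least $\aleph_0$ (a finite dense subset would already be closed and therefore equal to the whole space). Infiniteness is in turn arranged by anchoring at a countable set of pairwise distinct elements, which is available inside both source structures because non-trivial complete theories admit elementary extensions of arbitrarily large cardinality (Section~2). With that in hand, the entire argument reduces to the single recalled observation that a compact convex set which is not a singleton has a non-extreme point, applied to $S_2(T)$.
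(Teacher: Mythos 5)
Your argument works verbatim in the case where $T$ has no finite model (and there it coincides with the paper's: distinct $a,b$ give distinct types $tp(a,a)\neq tp(a,b)$ in $S_2(T)$, hence a non-extreme $q$, which is realized in one infinite separable model and omitted in another). The gap is in the remaining case: $T$ non-trivial but possessing a finite model. There your second model $M_2$ may fail to have density character $\aleph_0$. Theorem \ref{omitting} hands you \emph{some} $K\vDash T$ omitting $q$, but nothing forces $K$ to be infinite, and your fix --- ``non-trivial complete theories admit elementary extensions of arbitrarily large cardinality, so anchor at countably many distinct elements'' --- does not go through: in linear logic, elementary extensions routinely realize types that the base model omits. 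Indeed, by Theorem \ref{th1}, if $K$ realizes $tp(a,a)$ and $tp(a,b)$ then a suitable ultramean $K^\mu$ (an elementary extension of $K$) realizes $\tfrac{1}{2}tp(a,a)+\tfrac{1}{2}tp(a,b)=q$. So enlarging $K$ to make it infinite is exactly the move that can destroy the omission of $q$. This is not a removable technicality: by Proposition \ref{ext-first} and Corollary \ref{minimal model}, a theory with a compact (in particular finite) model has a \emph{unique} extremal model, namely the minimal one, which is finite here; so one cannot in general expect an infinite model omitting the chosen non-extreme type.

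The paper closes this case with a different pair of witnesses that avoids omitting types altogether: if $M$ is a finite model with $|M|\geqslant 2$, take an ultracharge $\mu$ on $\Nn$ with $\mu(\{n\})>0$ for all $n$; then $M^\mu$ is a compact, infinite, separable model of $T$, while the linear upward and CL downward L\"{o}wenheim--Skolem theorems give a non-compact separable model $N_0\vDash T$. Compactness is an isomorphism invariant, so $M^\mu\not\simeq N_0$. You need to add this (or some substitute) branch; as written, your proof only establishes the corollary for theories without finite models.
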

\begin{proof}
There is nothing to prove if $T$ is trivial.
First, assume $T$ has no finite models. Let $a,b\in M\vDash T$ be distinct.
Then $tp(a,a)$ and $tp(a,b)$ are distinct. So, $S_2(T)$ has a non-extreme type say $p(x,y)$.
There is an infinite separable model of $T$ which omits $p$ and an infinite separable one which realizes $p$.
These models are not isomorphic.
Now, assume $T$ has a finite model $M$ (of cardinality $\kappa\geqslant2$).
Let $\mu$ be an arbitrary ultracharge on $\Nn$ with the property that $\mu(\{n\})>0$ for every $n$.
Then $M^{\mu}$ is a compact infinite separable model of $T$.
On the other hand, $T$ has a non-compact model say $N$ by the linear variant of upward L\"{o}wenheim-Skolem theorem.
Then, using the CL variant of the downward L\"{o}wenheim-Skolem theorem, we obtain a non-compact
separable elementary submodel $N_0\preccurlyeq N$. Now, $M^\mu$ and $N_0$ are non-isomorphic models of $T$.
\end{proof}

\begin{proposition}
If $T$ is ample, it is not $\kappa$-categorical for any $\kappa\geqslant\aleph_1$.
\end{proposition}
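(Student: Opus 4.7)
The plan is to produce, for each $\kappa\geqslant\aleph_1$, two non-isomorphic models of $T$ of density character $\kappa$: one realizing a fixed non-extreme type $p$ and one omitting it. This immediately rules out $\kappa$-categoricity.

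First I would locate a non-extreme type. Because $T$ is ample, some model contains distinct points $a,b$ with $d(a,b)=1$, so $tp(a,a)$ and $tp(a,b)$ are distinct elements of $S_2(T)$. Hence $S_2(T)$ is a non-singleton compact convex set, and any segment joining two distinct points of such a set contains non-extreme interior points; this delivers a non-extreme type $p\in S_2(T)$.

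For the omitting side, I apply Proposition \ref{extremal3} with $\lambda=\kappa$ to obtain $N_1\vDash T$ of cardinality $\geqslant\kappa$ that omits $p$. The construction in that proof is flexible enough to force $N_1$ to contain a set $D$ of size $\kappa$ with pairwise distance $1$, which pins the density character of $N_1$ from below by $\kappa$. Downward L\"{o}wenheim-Skolem with $D$ as seed then yields $M_1\preccurlyeq N_1$ of density character exactly $\kappa$ still containing $D$ and still omitting $p$.

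For the realizing side, pick any $N_0\vDash T$ realizing $p$. Linear compactness combined with ampleness shows that $ediag(N_0)\cup T\cup\{d(c_i,c_j)\geqslant1:\ i\neq j,\ i,j<\kappa\}$ in fresh constants $c_i$ is satisfiable, so $N_0$ has an elementary extension $N_2$ containing a $1$-separated set $D'$ of size $\kappa$. Downward L\"{o}wenheim-Skolem with a realization of $p$ together with $D'$ as seed produces $M_2\preccurlyeq N_2$ of density character exactly $\kappa$ realizing $p$. Since $M_1$ omits $p$ while $M_2$ realizes it, $M_1\not\cong M_2$, so $T$ is not $\kappa$-categorical. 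The one point requiring care is pinning the density character to exactly $\kappa$: the $1$-separated sets of size $\kappa$ prevent collapse below $\kappa$ under L\"{o}wenheim-Skolem, while L\"{o}wenheim-Skolem caps it above.
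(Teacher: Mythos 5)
Your proof is correct and follows essentially the same route as the paper: both arguments produce, for each $\kappa\geqslant\aleph_1$, a model of density character $\kappa$ omitting a non-extreme type (via the ample omitting-types result, Proposition \ref{extremal3}/\ref{extremal4}, whose construction already plants a $1$-separated set of size $\kappa$) and another of density character $\kappa$ realizing one. Your version merely supplies details the paper leaves implicit, such as why a non-extreme type exists and how the $1$-separated sets pin the density character.
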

\begin{proof}
By Proposition \ref{extremal4} and downward L\"{o}wenheim-Skolem theorem,
there is an extremal model $M$ of $T$ with density character $\kappa$.
There is also a model with density character $\kappa$ which realizes some non-extreme type.
\end{proof}

\begin{proposition}
Assume $T$ is $\kappa$-categorical where $\kappa\geqslant|L|+\aleph_1$.
Then, all non-compact models of $T$ are $\mathrm{CL}$-equivalent.
If $T$ has no compact model, it is CL-complete.
\end{proposition}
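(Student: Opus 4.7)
The plan is to reduce both assertions to the single claim that every non-compact $M\vDash T$ is CL-equivalent to a fixed model $K$, where $K$ denotes the essentially unique model of $T$ of density character $\kappa$ supplied by $\kappa$-categoricity.

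I first observe that $\kappa\geqslant\aleph_1$ forces $K$ to be non-separable, hence non-compact, so $K$ itself is a legitimate non-compact witness. Now given an arbitrary non-compact $M\vDash T$, I would pass to a CL-elementary extension $M\preccurlyeq_{\mathrm{CL}} M^*$ whose density character is at least $\kappa$ --- this is available either from the CL version of upward L\"{o}wenheim-Skolem or from a countably incomplete ultracharge as in the discussion following Theorem \ref{th1} --- and $M^*\vDash T$ because linear conditions are particular CL-conditions. Next, using $|L|\leqslant\kappa$ together with a dense subset of $M^*$ of cardinality $\kappa$ as a seed, I would invoke the CL downward L\"{o}wenheim-Skolem theorem to extract $N\preccurlyeq_{\mathrm{CL}}M^*$ of density character exactly $\kappa$. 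Then $N\vDash T$ and $M\equiv_{\mathrm{CL}}M^*\equiv_{\mathrm{CL}}N$, while $\kappa$-categoricity in the linear sense gives $N\cong K$ as $L$-structures. Combining, $M\equiv_{\mathrm{CL}}K$, and since $M$ was arbitrary, transitivity of $\equiv_{\mathrm{CL}}$ yields the first assertion.

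For the second assertion, if $T$ has no compact model then every model of $T$ is non-compact, so by the first part all models of $T$ share the same CL-theory, which is exactly to say that $T$ is CL-complete. The main subtlety in the plan is keeping the density character under control through the two L\"{o}wenheim-Skolem steps --- the upward step must exceed $\kappa$ (easy) and the downward step must land exactly at $\kappa$, which is where the hypothesis $\kappa\geqslant|L|$ is used --- and the non-compactness of the intermediate witness $N$ is automatic from $\kappa\geqslant\aleph_1$, so the argument never collapses into the compact regime.
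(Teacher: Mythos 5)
Your proposal is correct and follows essentially the same route as the paper: both arguments use the full continuous (CL) L\"{o}wenheim--Skolem theorems to replace each non-compact model by a CL-equivalent model of density character exactly $\kappa$, then invoke $\kappa$-categoricity of the linear theory to identify these up to isomorphism, with the second assertion following immediately. Your variant of fixing the unique model $K$ of density character $\kappa$ as a common comparison point, rather than moving two arbitrary non-compact models simultaneously as the paper does, is only a cosmetic difference.
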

\begin{proof}
Let $M,N\vDash T$ be noncompact. Using the full continuous variant of
L\"{o}wenheim-Skolem theorems, one can find $M_0\equiv_{\mathrm{CL}}M$
and $N_0\equiv_{\mathrm{CL}}N$ such that $M_0$ and $N_0$ have both density character $\kappa$.
Then, $M_0\equiv N_0$ and hence $M_0\simeq N_0$. Therefore, $M\equiv_{\mathrm{CL}}N$.
\end{proof}
\bigskip

We end the paper with a question concerning first order models of $T$.
A positive answer implies that for any first order $M,N$, \ $M\equiv N$ implies that $M\equiv_{\mathrm{CL}}N$.
\bigskip

\noindent{\sc Question}: For each complete theory $T$, every (possible) first order $M\vDash T$ is extremal.

\end{document}